\newtheorem{theorem}{Theorem}
\newtheorem{lemma}[theorem]{Lemma}
\newtheorem{proposition}[theorem]{Proposition}
\newtheorem{observation}[theorem]{Observation}
\theoremstyle{definition}
\newtheorem{defin}[theorem]{Definition}
\newcounter{boldSectionCounter}
\newcounter{boldSubsectionCounter}
\newcommand{\boldSubsection}[1]{
	 \addtocounter{boldSectionCounter}{-1}
   \noindent {\bfseries{\scshape \arabic{boldSectionCounter}.\arabic{boldSubsectionCounter}. #1}}\\[6pt]
   \stepcounter{boldSubsectionCounter}
   \addtocounter{boldSectionCounter}{1}
}
\newcommand{\tdt}{
	\times\dots\times
}
\newcommand{\boldSection}[1]{
   \large\begin{center}\noindent {\bfseries{\scshape \S \arabic{boldSectionCounter} #1}}\\[6pt]\end{center}\normalsize
   \stepcounter{boldSectionCounter}
	 \setcounter{boldSubsectionCounter}{1}
}	
\newcommand{\spn}{\operatorname{span}}
\newcommand{\tightoverset}[2]{
  \mathop{#2}\limits^{\vbox to -.5ex{\kern-1.15ex\hbox{$#1$}\vss}}}
\newcommand{\bigconv}[1]{
	\mathbf{C}_{#1}
}
\newcommand\restr[2]{{
  \left.\kern-\nulldelimiterspace 
  #1 
  \vphantom{\big|} 
  \right|_{#2} 
}}	
\newcommand\blfootnote[1]{%
  \begingroup
  \renewcommand\thefootnote{}\footnote{#1}%
  \addtocounter{footnote}{-1}%
  \endgroup
}
\newcommand\ex{
	\mathop{\mathbb{E}}
}
\newcommand*\bcdot{\mathpalette\bigcdot@{0.5}}
\newcommand*\bigcdot@[2]{\mathbin{\vcenter{\hbox{\scalebox{#2}{$\m@th#1\bullet$}}}}}
\def\blfootnote{\gdef\@thefnmark{}\@footnotetext}
\begin{document}
\begin{center}\Large\noindent{\bfseries{\scshape Polynomial bound for partition rank in terms of analytic rank}}\\[24pt]
\normalsize\noindent{\scshape Luka Mili\'cevi\'c\dag}\\[6pt]
\end{center}
\blfootnote{\noindent\dag\ Mathematical Institute of the Serbian Academy of Sciences and Arts, Belgrade, Serbia.\\\phantom{\dag\ }Email: luka.milicevic@turing.mi.sanu.ac.rs}

\footnotesize
\begin{changemargin}{1in}{1in}
\centerline{\sc{\textbf{Abstract}}}
\hspace{12pt}~Let $G_1, \dots, G_k$ be vector spaces over a finite field $\mathbb{F} = \mathbb{F}_q$ with a non-trivial additive character $\chi$. The analytic rank of a multilinear form $\alpha \colon G_1 \tdt G_k \to \mathbb{F}$ is defined as $\operatorname{arank}(\alpha) = -\log_q \ex_{x_1 \in G_1, \dots, x_k\in G_k} \chi\big(\alpha(x_1,\dots, x_k)\big)$. The partition rank $\operatorname{prank}(\alpha)$ of $\alpha$ is the smallest number of maps of partition rank 1 that add up to $\alpha$, where a map is of partition rank 1 if it can be written as a product of two multilinear forms, depending on different coordinates. It is easy to see that $\operatorname{arank}(\alpha) \leq O\Big(\operatorname{prank}(\alpha)\Big)$ and it has been known that $\operatorname{prank}(\alpha)$ can be bounded from above in terms of $\operatorname{arank}(\alpha)$. In this paper, we improve the latter bound to polynomial, i.e.\ we show that there are quantities $C, D$ depending on $k$ only such that $\operatorname{prank}(\alpha) \leq C (\operatorname{arank}(\alpha)^D + 1)$. As a consequence, we prove a conjecture of Kazhdan and Ziegler.\\
\phantom{aaa}The same result was obtained independently and simultaneously by Janzer. 
\end{changemargin}

\boldSection{Introduction}
Throughout the paper $G_1, \dots, G_k$ are vector spaces over a finite field $\mathbb{F}$ and $\chi$ is a non-trivial additive character of $\mathbb{F}$. In order to measure how much the distribution of values of a multilinear form $\alpha \colon G_1 \tdt G_k \to \mathbb{F}$ deviates from the uniform distribution Gowers and Wolf~\cite{GowersWolf} introduced the notion of \emph{analytic rank}.

\begin{defin}[Analytic rank]\emph{Analytic rank} of $\alpha$ is defined as 
\[\operatorname{arank}(\alpha) = -\log_{|\mathbb{F}|} \ex_{x_1\in G_1, \dots, x_k\in G_k} \chi\Big(\alpha(x_1, \dots, x_k)\Big).\]\end{defin}

Note that this definition does not depend on the choice of $\chi$ and that for the case of two variables coincides with the usual algebraic rank of the corresponding matrix. Another way to generalize the notion of algebraic rank to multilinear forms is the \emph{partition rank}, introduced by Naslund~\cite{Naslund}, which is defined as follows.

\begin{defin}[Partition rank]Let $\alpha \colon G_1 \tdt G_k \to \mathbb{F}$ be a multilinear form. We say that $\alpha$ is of partition rank 1 if there is a set $\emptyset \not= I \subset [k]$ and multlinear forms $\beta \colon \prod_{i \in I} G_i \to \mathbb{F}$ and $\gamma \colon \prod_{i \in [k]\setminus I} G_i \to \mathbb{F}$ such that
\[\Big(\forall x_1 \in G_1, \dots, x_k \in G_k\Big) \alpha(x_1, \dots, x_k) = \beta(x_i \colon i \in I) \gamma(x_i \colon i \in [k] \setminus I).\]
In general, \emph{partition rank} $\operatorname{prank}(\alpha)$ of $\alpha$ is the smallest integer $r$ such that $\alpha$ is a sum of $r$ multilinear forms of partition rank 1. We also set $\operatorname{prank}(0) = 0$.\end{defin}

It is easy to see that $\operatorname{arank}(\alpha) \leq O\Big(\operatorname{prank}(\alpha)\Big)$ and Lovett~\cite{Lov} proved $\operatorname{arank}(\alpha) \leq \operatorname{prank}(\alpha)$. On the other hand, it turns out that we can bound partition rank in terms of analytic rank. This was first proved by Bhowmick and Lovett~\cite{BhowLov}, and they showed that $\operatorname{prank}(\alpha) \leq f(\operatorname{arank}(\alpha), k, |\mathbb{F}|)$, where $f$ has an Ackerman-type dependence on its parameters. This bound was recently improved significantly by Janzer~\cite{Janzer1} to tower of exponentials of height depending on $k$. In this paper we prove 
\begin{theorem}\label{strongIntro}For given $k$, there are constants $C, D$ such that $\operatorname{prank}(\alpha) \leq C (\operatorname{arank}(\alpha)^D + 1).$\end{theorem}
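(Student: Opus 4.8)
The plan is to run an inductive, "regularity-style" argument on $k$, turning a bound on the analytic rank of $\alpha$ into a bound on its partition rank by repeatedly peeling off rank-$1$ pieces while controlling the loss at each step. The key invariant to track is that $\chi(\alpha)$, viewed as a function on $G_1 \times \dots \times G_k$, has large average, i.e.\ $\operatorname{arank}(\alpha) = r$ is small; I want to convert this into structural information coordinate by coordinate. The workhorse input should be a Cauchy--Schwarz / box-norm estimate: if $\operatorname{arank}(\alpha)$ is small then, averaging over the last coordinate $x_k$, for many values of $x_k$ the $(k-1)$-linear form $\alpha(\cdot,\dots,\cdot,x_k)$ itself has small analytic rank; more usefully, the derivative form $\partial_{h}\alpha$ in one coordinate (or a suitable multilinearization of $|\chi(\alpha)|^2$) has controlled analytic rank. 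The aim is to set up a dichotomy: either the form already has small partition rank, or one can find a low-codimension subspace / a bounded-rank "correction" that strictly simplifies the form, and this process terminates in polynomially many steps.

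Concretely, I would first reduce to the case $k=2$ as the base (there $\operatorname{prank} = \operatorname{arank}$ is the ordinary matrix rank, so nothing to do), and then, for the inductive step from $k-1$ to $k$, proceed as follows. Step 1: write $\alpha(x_1,\dots,x_k) = \sum_{j} b_j(x_1,\dots,x_{k-1})\, c_j(x_k)$ by fixing a basis of $G_k$, so $\alpha$ is a "vector" of $(k-1)$-linear forms $b_j$. Step 2: using $\operatorname{arank}(\alpha) = r$, deduce via a counting argument that the subspace $V \leq \{(k-1)\text{-linear forms on } G_1\times\dots\times G_{k-1}\}$ spanned by the $b_j$ is, in a suitable quantitative sense, "of small analytic rank as a whole" — precisely, that a generic linear combination of the $b_j$ has analytic rank $\mathrm{poly}(r)$, and more importantly that the *bias* of $\alpha$ forces the $b_j$ to cluster near a bounded-dimensional space of low-rank forms. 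Step 3: apply the inductive hypothesis (polynomial $\operatorname{prank}$-vs-$\operatorname{arank}$ bound in $k-1$ variables) to the relevant combinations of the $b_j$, write each such combination as a bounded sum of products, and reassemble: every product $\beta \cdot \gamma$ with $\beta$ on a proper subset of $\{1,\dots,k-1\}$ recombines with the $c_j$-dependence to give a partition-rank-$1$ term for $\alpha$ in $k$ variables. Step 4: iterate — after removing these terms, the residual form has strictly smaller analytic rank (or smaller "$G_k$-dimension" of the span), and one bounds the number of iterations polynomially, multiplying the per-step cost (also polynomial, from the inductive hypothesis) to get a polynomial overall bound $C(r^D+1)$ with $C,D$ depending only on $k$.

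The main obstacle — and the technical heart of the argument — is Step 2: making precise and quantitative the claim that small analytic rank of the "bundle" $\alpha$ of forms $b_1,\dots,b_m$ forces the existence of a *polynomially bounded* number of lower-arity forms whose removal genuinely reduces complexity, \emph{without} incurring the tower-type or Ackermann-type losses that plagued earlier proofs. The danger is the usual one in regularity arguments: each "find a structured piece, refine, repeat" cycle can blow up the parameters exponentially, and iterating $k$ times then gives a tower. To get a polynomial bound one needs the refinement to be essentially loss-free — e.g.\ by working with the analytic rank directly as a subadditive, Cauchy--Schwarz-friendly quantity and showing the *decrement* per step is at least a constant (or at least $\Omega(1/\mathrm{poly})$) fraction, rather than using a naively-chosen low-codimension subspace. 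I expect this to rely on a sharpened "inverse theorem for bias" in $k-1$ variables with polynomial bounds (supplied inductively) together with a careful bookkeeping that the number of rank-$1$ terms accumulated across all iterations telescopes to $\mathrm{poly}(r)$. Handling the dependence on $|\mathbb{F}|$ — which must be eliminated, since the theorem's constants depend on $k$ only — is a secondary but real subtlety, presumably absorbed by always phrasing the counting estimates in terms of $q^{-\Omega(\cdot)}$ densities that are independent of $q$ once $\operatorname{arank}$ is used as the parameter.
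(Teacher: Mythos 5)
Your proposal is the ``naive'' iterative-regularity strategy (decompose, find a low-rank correction, peel off, iterate), which is precisely the kind of argument you yourself identify as prone to tower- or Ackermann-type losses, and you never supply the mechanism that would make the iteration polynomially efficient. The paper's argument is of a genuinely different shape, and your Step 2 --- which you correctly flag as the technical heart --- contains a claim that is not merely unproved but essentially false. Writing $\alpha(x_{[k]}) = A(x_{[k-1]}) \cdot x_k$, the hypothesis $\operatorname{arank}(\alpha) = r$ says that $\ex_\lambda \ex_{x_{[k-1]}} \chi\big(\lambda \cdot A(x_{[k-1]})\big) = q^{-r}$, where each inner expectation is a non-negative real by Lovett's observation (Lemma~\ref{biasHomog}). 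It follows that at most a $q^{-r/2}$-fraction of directions $\lambda$ can have $\operatorname{arank}(\lambda \cdot A) \leq r/2$, so a \emph{generic} linear combination of the $b_j$ has \emph{large} analytic rank, the opposite of what you assert. Only an exponentially small (in $r$) fraction of combinations are biased, and that fact alone does not isolate a bounded-dimensional ``low-rank core'' of the span of the $b_j$. Consequently Step 3 has no valid input, and Step 4's claim that each removal strictly decreases analytic rank is not justified either: subtracting partition-rank-one pieces only gives $\operatorname{arank}(\alpha') \geq \operatorname{arank}(\alpha) - O(\text{pieces removed})$, with no decrement guarantee in the useful direction.

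The paper takes an entirely different route. Rather than reasoning about linear combinations of the $b_j$, it first proves a \emph{weak} inverse theorem: the dense variety $\{A = 0\}$ contains a low-codimensional subvariety cut out by multilinear forms on \emph{proper subsets} of $[k-1]$ (Theorem~\ref{weakInvThm}). That weak statement is then upgraded to the strong (partition-rank) statement via an induction over up-sets $\mathcal{F} \subset \mathcal{P}[k-1]$, whose engine is the ``one-sided regularity lemma'' (Proposition~\ref{nonzeroConnIntro}): if a form $\rho$ is quasirandom relative to the $\gamma_i$, then the set $\{\gamma_i = 0 \ \forall i,\ \rho \neq 0\}$ is connected with bounded diameter, and this connectivity is what permits the crucial identity in Proposition~\ref{layerEquation} expressing $A'$ in terms of finitely many fixed slices. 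The weak inverse theorem itself is established not directly but by closing a cycle of implications $\bm{Weak}(k)\Rightarrow\bm{Strong}(k)\Rightarrow\bm{Inner}(k-1)\Rightarrow\bm{Columns}(k)$ and $\bm{Inner}\wedge\bm{Columns}\Rightarrow\bm{Conv}(k)\Rightarrow\bm{Weak}(k+1)$, where the approximation theorems for varieties (via dependent random choice, Lemma~\ref{BohrApprox}) and the almost-$L^\infty$ approximation of convolutions carry the quantitative weight, and the final step $\bm{Conv}(k)\Rightarrow\bm{Weak}(k+1)$ is an explicit arrangement-counting argument, not a rank-decrement iteration. None of this machinery --- the connectivity lemma, the variety-approximation theorems, the convolution theorem, the arrangement counting --- appears in your sketch, and without a concrete replacement for it, your Steps 2--4 do not constitute a proof.
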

We refer to this result as the \emph{strong inverse theorem for multilinear forms of low analytic rank} or simply as \emph{strong inverse theorem}.\footnote{We emphasize the word \emph{strong} to make distiction from a weak version of the inverse theorem which we also prove in this paper.} The constants $C,D$ can be taken to be $C = 2^{k^{2^{O(k^2)}}}, D = 2^{2^{O(k^2)}}$. Note also that, since every non-zero form $\alpha \colon G_1 \tdt G_k\to \mathbb{F}$ takes at least $\Big(\frac{|\mathbb{F}| - 1}{|\mathbb{F}|}\Big)^k |G_1|\cdots |G_k|$ non-zero values, there is a constant $a_0$, depending on $k$ and $\mathbb{F}$, such that for all non-zero forms $\alpha$, $\operatorname{arank}(\alpha) \geq a_0$. Hence, we may rewrite the bound in Theorem~\ref{strongIntro} as 
\[\operatorname{prank}(\alpha) \leq C' \operatorname{arank}(\alpha)^D,\]
but the new constant $C'$ would need to depend on $\mathbb{F}$ as well.\\[3pt]
\indent Theorem~\ref{strongIntro} was also proved by Janzer~\cite{Janzer2} independently and simultaneously, using a different argument.\\

Before the study of ranks of multilinear forms, an important topic of study has been the distribution of multivariate polynomials over finite fields. In this direction, we have the following result proved by Green and Tao~\cite{GreenTao} (with the restriction on the degree of the polynomial to be bounded by the size of the field), and Kaufman and Lovett~\cite{KaufLov} (without the restriction on the degree), and was applied by Bhowmick and Lovett~\cite{BhowLov} to coding theory and effective algebraic geometry.

\begin{theorem}[Green and Tao~\cite{GreenTao}; Kaufman and Lovett~\cite{KaufLov}]\label{biasedPoly} Let $\mathbb{F}$ be a field of prime order. Suppose that $f \colon \mathbb{F}^n \to \mathbb{F}$ is a polynomial of degree $d$ ($d < |\mathbb{F}|$ in the result of Green and Tao). If $\Big|\ex_{x_1, \dots, x_n \in \mathbb{F}} \chi\Big(f(x_1, \dots, x_n)\Big)\Big| \geq c$, then there is $r \leq O_{p, d, c^{-1}}(1)$, polynomials $f_1, \dots, f_r$ of degree $d-1$, and a map $g \colon \mathbb{F}^r \to \mathbb{F}$ such that
\[\Big(\forall x_1, \dots, x_n \in \mathbb{F}\Big) f(x_1, \dots, x_n) = g\Big(f_1(x_1, \dots, x_n), \dots, f_r(x_1, \dots, x_n)\Big).\]\end{theorem}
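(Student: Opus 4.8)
The plan is to deduce this from Theorem~\ref{strongIntro} in the range $d < p := |\mathbb{F}|$ (the Green--Tao case), passing from the polynomial $f$ to a multilinear form by iterated additive differencing; the case $d \ge p$ needs an extra argument, which I comment on at the end.

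Assume first $d < p$. Put $F = \chi \circ f \colon \mathbb{F}^n \to \mathbb{C}$, a $1$-bounded function, and recall that the Gowers uniformity norms are monotone, $\|F\|_{U^1} \le \|F\|_{U^2} \le \dots$, with $\|F\|_{U^1} = |\ex_x F(x)| = |\ex_x \chi(f(x))| \ge c$. Since $f$ has degree $d$, its $d$-fold additive derivative $\Delta_{h_1}\cdots\Delta_{h_d} f$ (where $\Delta_h u(x) := u(x+h) - u(x)$) does not depend on $x$, and an iterated application of the discrete Leibniz rule shows it to be a symmetric $d$-linear form $M$ in $(h_1, \dots, h_d) \in (\mathbb{F}^n)^d$: only the degree-$d$ homogeneous part $f_d$ of $f$ survives the differencing, and $M$ satisfies $M(x, \dots, x) = d!\, f_d(x)$ (here $d! \ne 0$ in $\mathbb{F}$, as $p$ is prime and $p > d$). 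Unwinding the definition of the $U^d$-norm and using that $M$ is independent of $x$, one identifies $\|F\|_{U^d}^{2^d}$ with $\ex_{h_1, \dots, h_d} \chi\big(M(h_1, \dots, h_d)\big)$, up to an overall sign of $M$ that is immaterial since $\operatorname{arank}(-M) = \operatorname{arank}(M)$; hence $\|F\|_{U^d}^{2^d} = |\mathbb{F}|^{-\operatorname{arank}(M)}$. Combined with monotonicity this gives $|\mathbb{F}|^{-\operatorname{arank}(M)} \ge \|F\|_{U^1}^{2^d} \ge c^{2^d}$, i.e.\ $\operatorname{arank}(M) \le 2^d \log_{|\mathbb{F}|}(1/c)$.

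Now apply Theorem~\ref{strongIntro} to the $d$-linear form $M$: with $k = d$, there is $r \le C\big((2^d \log_{|\mathbb{F}|}(1/c))^D + 1\big) = O_{p, d, c^{-1}}(1)$ (in fact polynomial in $\log(1/c)$) together with a decomposition $M = \sum_{j=1}^r \beta_j \gamma_j$ in which $\beta_j$ is multilinear in the coordinates of a nonempty proper subset $I_j \subsetneq [d]$ and $\gamma_j$ is multilinear in the complementary coordinates. Restricting each factor to the diagonal, put $p_j(x) = \beta_j(x, \dots, x)$ and $q_j(x) = \gamma_j(x, \dots, x)$; since $1 \le |I_j| \le d-1$ and $1 \le d - |I_j| \le d-1$, these are polynomials of degree at most $d-1$, and
\[f(x) = f_d(x) + \big(f(x) - f_d(x)\big) = (d!)^{-1} \sum_{j=1}^r p_j(x)\, q_j(x) + \big(f(x) - f_d(x)\big).\]
As $f - f_d$ has degree at most $d-1$, taking the list of polynomials $p_1, q_1, \dots, p_r, q_r, f - f_d$ and the map $g(u_1, v_1, \dots, u_r, v_r, w) = (d!)^{-1}\sum_{j} u_j v_j + w$ realizes $f$ as a function of $2r + 1 = O_{p,d,c^{-1}}(1)$ polynomials of degree at most $d-1$, which is the claim when $d < |\mathbb{F}|$.

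For $d \ge p$ this reduction breaks down: $d!$ may vanish in $\mathbb{F}$, and, more seriously, the $d$-fold additive derivative of a polynomial of formal degree $d$ need not be multilinear (already $\Delta_h(x_1^p) = h_1^p$ is not linear in $h$). The standard remedy is an induction on $d$: one Cauchy--Schwarz step gives $\ex_h |\ex_x \chi(\Delta_h f(x))|^2 \ge c^2$, so a positive proportion of the derivatives $\Delta_h f$, all of degree $\le d-1$, are $\gg c$-biased; by the inductive hypothesis each such $\Delta_h f$ is a function of boundedly many polynomials of degree $\le d-2$, and one must then \emph{patch} these local descriptions into a single bounded-size description of $f$ itself. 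This patching step — which also has to cope with the classical/non-classical subtlety of polynomials in characteristic $p$, alternatively absorbed by working with non-classical polynomials throughout in the spirit of Tao and Ziegler — is the technical heart of the Kaufman--Lovett argument, and I expect it, rather than the top-degree input furnished by Theorem~\ref{strongIntro}, to be the main obstacle in the general case.
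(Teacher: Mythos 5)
Your argument is correct and takes essentially the same route as the paper's sketch: polarize $f$ to a symmetric $d$-linear form, lower-bound its bias by that of $f$ via Cauchy--Schwarz, apply Theorem~\ref{strongIntro}, and restrict the resulting decomposition to the diagonal. The only cosmetic differences are that you phrase the Cauchy--Schwarz iteration as monotonicity of the Gowers $U^d$-norm (rather than the explicit polarization identity plus the box-norm Cauchy--Schwarz of Proposition~\ref{unifBound}, giving the marginally sharper exponent $2^d$ in place of $2^{2d-2}$) and that you explicitly carry the subleading term $f - f_d$ as an extra polynomial of degree $\le d-1$, a point the paper's displayed identity $f(x) = \alpha(x,\dots,x)$ leaves implicit since it only holds verbatim for homogeneous $f$; your closing remarks on $d \ge p$ are accurate context for a case both you and the paper's sketch omit.
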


As observed by Green and Tao in~\cite{GreenTao} and by Janzer in~\cite{Janzer1}, it is easy to deduce this result from the strong inverse theorem, at least when $d < \operatorname{char} \mathbb{F}$. We include a very short sketch to confirm that we may take polynomial bounds in Theorem~\ref{biasedPoly} as well. This proves a conjecture of Kazhdan and Zielger (Conjecture 3.5 in~\cite{KazhZie}).

\begin{proof}[Sketch proof of Theorem~\ref{biasedPoly}]Consider a symmetric multilinear form $\alpha \colon (\mathbb{F}^n)^d \to \mathbb{F}$ such that $f(x) = \alpha(x, x, \dots, x)$. Since $d < \operatorname{char} \mathbb{F}$, $d!$ is invertible in $\mathbb{F}$, and such a form exists. Notice that for all $x, y^1, \dots, y^{d-1} \in \mathbb{F}^n$
\[\begin{split}\sum_{S \subset [d-1]} (-1)^{d-1-|S|}f\Big(x + \sum_{i \in S} y^i\Big) = (d-1)!\alpha(x, y^1, \dots, y^{d-1}) + &g(y^1, \dots, y^{d-1})\\
 + &\sum_{i \in [d-1]} h_i(y^1, \dots, y^{i-1}, x, y^{i+1}, \dots, y^{d-1}),\end{split}\]
for some maps $g, h_1, \dots, h_{d-1}$. Observe also that
\[\begin{split}\Big|\ex_{x\in \mathbb{F}^n} \chi\Big(\alpha(x, \dots, x)\Big)\Big|^2 = \Big|\ex_{x, y\in \mathbb{F}^n} &\chi\Big(\alpha(x + y, \dots, x + y)\Big)\Big|^2 \leq \ex_{x\in \mathbb{F}^n} \Big| \ex_{y\in \mathbb{F}^n} \chi\Big(\alpha(x + y, \dots, x+ y)\Big)\Big|^2 \\
= &\ex_{x,y,z\in \mathbb{F}^n} \chi\Big(\alpha(x + y, \dots, x+ y) - \alpha(x + z, \dots, x + z)\Big)\\
= &\ex_{x,y\in \mathbb{F}^n} \chi\Big(\alpha(x + y, \dots, x+y) - \alpha(x, \dots, x)\Big).\end{split}\]
Applying this $d-1$ times in total, we get
\[\begin{split}c^{2^{d-1}} = &\Big|\ex_{x\in \mathbb{F}^n} \chi(f(x))\Big|^{2^{d-1}} = \Big|\ex_{x\in \mathbb{F}^n} \chi\Big(\alpha(x, \dots, x)\Big)\Big|^{2^{d-1}} \leq \ex_{x, y^1, \dots, y^{d-1}\in \mathbb{F}^n} \chi\Big(\sum_{S \subset [d-1]} (-1)^{d-1-|S|}f\big(x + \sum_{i \in S} y^i\big)\Big)\\
 = &\ex_{x, y^1, \dots, y^{d-1}\in \mathbb{F}^n} \chi\Big((d-1)!\hspace{2pt}\alpha(x, y^1, \dots, y^{d-1}) + g(y^1, \dots, y^{d-1}) + \sum_{i \in [d-1]} h_i(y^1, \dots, y^{i-1}, x, y^{i+1}, \dots, y^{d-1})\Big).\end{split}\]
Hence, by Cauchy-Schwarz inequality for the box norm (Lemma~\ref{unifBound}), we get
\[\ex_{x, y^1, \dots, y^{d-1}\in \mathbb{F}^n} \chi\Big(\alpha(x, y^1, \dots, y^{d-1}\Big) \geq c^{2^{2d-2}}.\]
Apply the strong inverse theorem to find $r \leq C \Big(2^{2d-2} \log_{|\mathbb{F}|} (|\mathbb{F}|c^{-1})\Big)^D$, sets $\emptyset \not= I_i \subsetneq [d]$ and multilinear forms $\beta_i \colon \prod_{j \in I_i} G_j \to \mathbb{F}, \gamma_i \colon \prod_{j \in [d] \setminus I_i} G_j \to \mathbb{F}$ for $i \in [r]$ such that
\[\alpha(u^1, \dots, u^d) = \sum_{i \in [r]} \beta_i(u_j \colon j \in I_i) \gamma_i(u_j \colon j \in [d] \setminus I_i).\]
Then $f(x) = \alpha(x, \dots, x) = \sum_{i \in [r]} \beta_i(x, \dots, x) \gamma_i(x, \dots, x)$, as desired.\end{proof}

The proof of the strong inverse theorem given here is entirely self-contained and in particular does not use other results of additive combinatorics such as Freiman's theorem, and, as it is clear from the bounds we obtain here, we do not apply a regularity lemma. In fact, it is probable that the results of this paper may replace the use of regularity lemmas in similarly algebraic settings. In the next subsection, we list main results of this paper, discuss the proofs and the ideas in more detail.\\
\pagebreak

\boldSubsection{Main results and overview of the argument}
\textbf{Notation.}\ In the rest of the paper, we use the following convention to save writing in situations where we have many indices appearing in predictable patterns. Instead of whole sequence $x_1, \dots, x_m$, we write $x_{[m]}$, and we write $x_I$ for $I \subset [m]$ to be the subsequence with indices in $I$. This applies to products as well: $G_{[k]}$ stands for $\prod_{i \in [k]} G_i$ and $G_I = \prod_{i \in I} G_i$. For example, instead of writing $\alpha \colon \prod_{i \in I} G_i \to \mathbb{F}$ and $\alpha(x_i \colon i \in I)$, we write $\alpha \colon G_I \to \mathbb{F}$ and $\alpha(x_I)$.\\
\indent Given $\mathbb{F}$-vector spaces $G_1, \dots, G_k, H$, a map $\alpha \colon G_{[k]} \to H$ is said to be \emph{multilinear} if it is linear in each coordinate, that is, whenever $x_{[i-1]} \in G_{[i-1]}, x_{[i+1, k]} \in G_{[i+1, k]}$ and $y,z \in G_i$, then 
\[\alpha(x_{[i-1]}, y + z, x_{[i+1, k]}) = \alpha(x_{[i-1]}, y, x_{[i+1, k]}) + \alpha(x_{[i-1]}, z, x_{[i+1, k]}).\]
Similarly, $\alpha$ is \emph{multiaffine} if it is affine in each coordinate, i.e.\ whenever $x_{[i-1]} \in G_{[i-1]}, x_{[i+1, k]} \in G_{[i+1, k]}$ and $y,z, w \in G_i$, then 
\[\alpha(x_{[i-1]}, y + z - w, x_{[i+1, k]}) = \alpha(x_{[i-1]}, y, x_{[i+1, k]}) + \alpha(x_{[i-1]}, z, x_{[i+1, k]}) - \alpha(x_{[i-1]}, w, x_{[i+1, k]}).\]
Also, we refer to the zero set of a multiaffine map $\alpha \colon G_{[k]} \to H$, where $H$ is a vector space over $\mathbb{F}$, as \emph{variety}, and the codimension of a variety is $\dim H$. Another convention we adopt is that we write $\ex_x$, without specifying the set from which $x$ is taken, when this causes no confusion. Frequently we shall consider `slices' of sets $S\subset G_{[k]}$, by which we mean sets $S_{x_I} = \{y_{[k]\setminus I} \in G_{[k] \setminus I} \colon (x_I, y_{[k] \setminus I}) \in S\}$, for $I \subset [k], x_I \in G_I$. Occasionally, we might have a single element $z \in G_i$ instead of $x_I$, and in this case we write $S_{i \colon z}$ for the resulting slice, since the direction $i$ is not clear from the notation $z$, unlike in the case of $x_I$. Finally, for each vector space $G_i$, fix a dot product $\cdot$. We need this for the characterization of linear forms on $G_i$ -- each linear form $\phi \colon G_i \to \mathbb{F}$ takes form $\phi(x) = x \cdot u$ for an element $u \in G_i$.\\[6pt]
\noindent\textbf{Results and outline.}\ Our first main result is the weak version of the (strong) inverse theorem.
 
\begin{theorem}[Weak inverse theorem for maps of low analytic rank - \textbf{Weak}($k$)]\label{weakInvThm}For given $k$, there are constants $C = C^{\bm{weak}}_k, D = D^{\bm{weak}}_k > 0$ with the following property. Suppose that $\alpha \colon G_{[k]} \to \mathbb{F}$ is a multilinear form such that $\ex_{x_{[k]}} \chi\Big(\alpha(x_{[k]})\Big) \geq c$, for some $c > 0$. Then, there is $r \leq C \log_{|\mathbb{F}|}^D (|\mathbb{F}|c^{-1})$ and there are multilinear maps $\beta_i \colon G_{I_i} \to \mathbb{F}$, $i \in [r]$, where $\emptyset \not= I_i \subset [k-1]$ such that 
\[\Big\{x_{[k]} \in G_{[k]} \colon (\forall i \in [r]) \beta_i(x_{I_i}) = 0\Big\} \subset \Big\{x_{[k]} \in G_{[k]} \colon \alpha(x_{[k]}) = 0\Big\}.\]
\end{theorem}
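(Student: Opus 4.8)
The plan is to induct on the number of variables. Since $\alpha$ is linear in the last coordinate, write $\alpha(x_{[k]}) = x_k\cdot u(x_{[k-1]})$ for the unique multilinear map $u\colon G_{[k-1]}\to G_k$ determined by the fixed dot product on $G_k$. Averaging over $x_k$ first, $\ex_{x_{[k]}}\chi\big(\alpha(x_{[k]})\big) = \Pr_{x_{[k-1]}}\big[u(x_{[k-1]}) = 0\big]$, so the hypothesis says precisely that $u$ vanishes on a set of density at least $c$. Furthermore, given multilinear forms $\beta_i\colon G_{I_i}\to\mathbb{F}$ with $I_i\subseteq[k-1]$, their common zero set in $G_{[k]}$ is the cylinder $W\times G_k$, where $W = \{x_{[k-1]}\colon\beta_i(x_{I_i}) = 0\text{ for all }i\}$, and $\alpha$ vanishes on this cylinder exactly when $u$ vanishes on $W$. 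So the theorem is equivalent to the following statement, which I would prove by induction on $m$: if $u\colon G_{[m]}\to V$ is a multilinear map into a finite-dimensional $\mathbb{F}$-vector space $V$ with $\Pr_{x_{[m]}}[u = 0]\ge c$, then there are at most $C\log_{|\mathbb{F}|}^D(|\mathbb{F}|c^{-1})$ multilinear forms on subproducts of $G_1,\dots,G_m$ whose common zero set is contained in $\{u = 0\}$.

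The case $m = 0$ is trivial ($u$ is a constant in $V$, forced to be $0$), which handles $k = 1$. For the inductive step I would first treat the easy case: if $\img u$ spans a subspace of dimension $\le r$ in $V$, choose $\lambda_1,\dots,\lambda_r\in V^*$ restricting to a basis of the dual of that span; then $\{u = 0\}$ is cut out by the $r$ multilinear forms $\lambda_j\circ u$ on all of $G_{[m]}$, and we are done. So we may assume $\img u$ spans a large-dimensional subspace — a situation that genuinely occurs (for instance $u(x_{[3]}) = (x_1\cdot x_2)\,x_3$) and that the density hypothesis alone does not exclude, so this case needs the actual argument.

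In the large-image case I would slice in the last free coordinate: for $z\in G_{m}$ put $u_z := u(\,\cdot\,,z)\colon G_{[m-1]}\to V$, a multilinear map on $m-1$ spaces. From $\Pr_{x_{[m]}}[u = 0] = \ex_z\Pr_{x_{[m-1]}}[u_z = 0]\ge c$ we get a density-$\ge c/2$ set $Z\subseteq G_m$ with $\Pr_{x_{[m-1]}}[u_z = 0]\ge c/2$ for $z\in Z$, so the inductive hypothesis supplies, for each $z\in Z$, a family $\mathcal{F}_z$ of at most $r' = O_k\big(\log^{O_k(1)}(|\mathbb{F}|c^{-1})\big)$ multilinear forms on subproducts of $G_1,\dots,G_{m-1}$ whose common zero set $V_z$ lies in $\{u_z = 0\}$. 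The crucial reduction is that it suffices to produce a \emph{single} family $\mathcal{F}$, with common zero set $V^*$, that works for a density-$\ge\delta$ subset $Z'\subseteq Z$: for then $u(x_{[m-1]},z) = 0$ for all $x_{[m-1]}\in V^*$ and all $z\in Z'$, so, fixing $x_{[m-1]}\in V^*$, the linear map $z\mapsto u(x_{[m-1]},z)$ vanishes on the dense set $Z'$ and hence on $\spn(Z')$, a subspace of $G_m$ of codimension at most $\log_{|\mathbb{F}|}(\delta^{-1})$; adjoining to $\mathcal{F}$ a system of linear forms cutting out $\spn(Z')$ then gives at most $r' + \log_{|\mathbb{F}|}(\delta^{-1})$ multilinear forms on subproducts of $G_1,\dots,G_m$ with common zero set inside $\{u = 0\}$, completing the induction.

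The main obstacle is exactly this amalgamation step — forcing one family of forms to serve a dense set of slices. A priori the families $\mathcal{F}_z$, and the varieties $V_z$, depend uncontrollably on $z$, and $Z$ is not itself a variety, so one cannot simply intersect the $V_z$. I expect to overcome this by (i) regularizing $u$ before slicing — a polynomial-cost rank-reduction bringing its relevant combinatorial structure into a form that is essentially constant across almost all slices — and (ii) running the induction with a strengthened conclusion asserting that the forms produced depend in a bounded-complexity way on any auxiliary parameters, so that $z\mapsto\mathcal{F}_z$ has a dense fibre $Z'$. Throughout, every ingredient — the regularization, the slicing (which would use the Cauchy--Schwarz/box-norm estimate of Lemma~\ref{unifBound}), and the passage from the dense set $Z'$ to a subspace — must be kept polynomial in $\log_{|\mathbb{F}|}(|\mathbb{F}|c^{-1})$; in particular any use of a Bogolyubov-type sumset input should be replaced by a polynomially effective version, or avoided altogether by exploiting the rigidity of the multilinear setting. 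This quantitative bookkeeping, together with the amalgamation, is where the real work lies.
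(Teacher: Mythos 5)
Your reduction to ``find a low-codimensional variety inside $\{u=0\}$'' and your span argument in the last coordinate are both correct, and you have correctly located the crux: one must find a single small family $\mathcal{F}$ that works simultaneously for a dense set of slices $z$. But that amalgamation step is precisely what you do not prove, and it is not a routine bookkeeping matter. The inductive hypothesis, applied slice by slice, gives families $\mathcal{F}_z$ with no a priori control on how they vary in $z$ --- there is no finite collection they are drawn from, no quantitative continuity, and no reason the varieties $V_z$ should have a common low-codimensional sub-variety over a dense set of $z$. Direct pigeonholing fails (exponentially many possible outputs), and intersecting the $V_z$ over a dense set of $z$ could inflate codimension without bound. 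Your proposed fixes --- ``regularize $u$ before slicing'' and ``run the induction with a strengthened conclusion asserting bounded-complexity dependence on auxiliary parameters'' --- are gestures at the right difficulty, but the second of them is, in essence, a request for the strong inverse theorem (Theorem~\ref{strongInvThm}), which says $\alpha$ is \emph{globally} a short sum of products of lower-order multilinear forms; that is the statement that would make the dependence on $z$ explicit and bounded. The paper does not attempt your flat induction at all: it establishes $\bm{Weak}(k+1)$ via $\bm{Conv}(k)$, which is itself built from $\bm{Inner}(k-1)$ and $\bm{Columns}(k-1)$, and those rest on $\bm{Strong}(k)$ which rests on $\bm{Weak}(k)$. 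Concretely, for $\bm{Conv}(k)\Rightarrow\bm{Weak}(k+1)$: it first replaces $\{A=0\}$ by an external approximation $Z=\{\beta=0\}$ of bounded codimension (Lemma~\ref{BohrApprox}), shows $\bigconv{k}\cdots\bigconv{1}Z$ is large on a nonempty layer $D$ of a low-codimension multiaffine map, and then runs a counting argument over ``$[k]$-arrangements'' to conclude $D\subset\{A=0\}$, finally cleaning $D$ into a variety cut out by multilinear forms on subproducts of $G_{[k]}$. None of this is a small step, and without it your outline leaves a genuine gap exactly at the amalgamation.

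One further (repairable) technical slip: to get $I_i\subset[k-1]$ with $I_i\neq\emptyset$, after the span step you adjoin linear forms on $G_{k-1}$ cutting out $\spn(Z')$, but $\spn(Z')$ might be all of $G_{k-1}$ (e.g.\ if $Z'$ already spans), in which case you adjoin nothing; and when it is proper you must check that the adjoined forms are honestly multilinear (they are, being linear on the single factor $G_{k-1}$). Also note the base case: the theorem as stated allows $I_i=\emptyset$ to be excluded, so for $k=1$ a nonzero linear $\alpha$ with $\ex_x\chi(\alpha(x))\ge c$ forces $\alpha\equiv 0$ anyway, which you should state rather than rely on ``$u$ is a constant forced to be $0$.'' These are minor compared to the amalgamation issue.
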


Note that there is a multilinear map $A \colon G_{[k-1]} \to G_k$ such that for each $x_{[k]} \in G_{[k]}$, $\alpha(x_{[k]}) = A(x_{[k-1]}) \cdot x_k$. Then $\Big|\Big\{x_{[k-1]} \in G_{[k-1]} \colon A(x_{[k-1]}) = 0\Big\}\Big| = \Big(\ex_{x_{[k]}} \chi(\alpha(x_{[k]}))\Big)  |G_{[k-1]}|$. Thus, another way to phrase the weak inverse theorem is to say that every dense variety contains a low-codimensional variety. On the other hand, it is very easy to see that low-codimensional varieties are necessarily dense, see Lemma~\ref{basicDensityL}.\\
\indent Next, we have the strong inverse theorem.

\begin{theorem}[Strong inverse theorem for maps of low analytic rank - \textbf{Strong}($k$)]\label{strongInvThm}For given $k$, there are constants $C = C^{\bm{strong}}_k, D = D^{\bm{strong}}_k > 0$ with the following property. Suppose that $\alpha \colon G_{[k]} \to \mathbb{F}$ is a multilinear form such that $\ex_{x_{[k]}} \chi(\alpha(x_{[k]})) \geq c$, for some $c > 0$. Then, there is $r \leq C \log_{|\mathbb{F}|}^D (|\mathbb{F}|c^{-1})$ and there are multilinear maps $\beta_i \colon G_{I_i} \to \mathbb{F}$ and $\gamma_i \colon G_{[k] \setminus I_i} \to \mathbb{F}$, $i \in [r]$, where $\emptyset \not= I_i \subset [k-1]$ such that 
\[\Big(\forall x_{[k]} \in G_{[k]}\Big)\hspace{6pt}\alpha(x_{[k]}) = \sum_{i \in [r]} \beta_i(x_{I_i}) \gamma_i(x_{[k] \setminus I_i}).\]
\end{theorem}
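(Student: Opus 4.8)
The plan is to derive \textbf{Strong}($k$) from \textbf{Weak}($k$) via a purely algebraic \emph{division lemma}: a multilinear map that vanishes on a variety cut out by multilinear forms, each not involving the last coordinate, must be a short sum of those forms times complementary multilinear maps. The genuinely hard analytic input — controlling the value distribution, the Bogolyubov-type step, and the book-keeping that makes the codimension polynomial — is all packaged inside \textbf{Weak}($k$), which we may assume; what remains is to convert the \emph{set-theoretic} conclusion ``small variety $\subseteq \{\alpha = 0\}$'' into the \emph{algebraic} conclusion ``$\alpha$ is a short combination''.

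Concretely, first write $\alpha(x_{[k]}) = A(x_{[k-1]}) \cdot x_k$ for the multilinear map $A \colon G_{[k-1]} \to G_k$ attached to $\alpha$. Applying \textbf{Weak}($k$) gives multilinear $\beta_i \colon G_{I_i} \to \mathbb{F}$ with $\emptyset \neq I_i \subseteq [k-1]$ and $m \leq C^{\bm{weak}}_k \log^{D^{\bm{weak}}_k}_{|\mathbb{F}|}(|\mathbb{F}| c^{-1})$, such that $\bigcap_{i \in [m]} \{x_{[k-1]} \colon \beta_i(x_{I_i}) = 0\}$ lies in the zero set of $A$ inside $G_{[k-1]}$. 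It then suffices to prove, and apply with $n = k-1$ and $H = G_k$, the following division lemma: if $A \colon G_{[n]} \to H$ is multilinear ($H$ an arbitrary $\mathbb{F}$-vector space) and its zero set contains $\bigcap_{i \in [m]} \{x_{[n]} \colon \beta_i(x_{I_i}) = 0\}$ with $\emptyset \neq I_i \subseteq [n]$, then $A = \sum_{j \in [m']} \gamma_j \otimes B_j$ for multilinear $\gamma_j \colon G_{J_j} \to \mathbb{F}$ and $B_j \colon G_{[n] \setminus J_j} \to H$ with $\emptyset \neq J_j \subseteq [n]$ and $m' \leq \operatorname{poly}_n(m)$, where it is essential that $m'$ not depend on $\dim H$. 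Given this, since $J_j \subseteq [k-1]$ we recombine $B_j$ with $x_k$ to obtain $\alpha(x_{[k]}) = \sum_{j} \gamma_j(x_{J_j})\, \big(B_j(x_{[k-1] \setminus J_j}) \cdot x_k\big)$, exactly the shape demanded by \textbf{Strong}($k$), with $r = m'$ polynomial in $\log_{|\mathbb{F}|}(|\mathbb{F}| c^{-1})$.

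The division lemma I would prove by induction on $n$. The base case $n = 1$ is linear algebra: $\bigcap_i \ker \beta_i \subseteq \ker A$ forces $A$ to factor through $G_1 / \bigcap_i \ker \beta_i$, so $A = \sum_i \beta_i \otimes v_i$. For the inductive step, split the indices into $T = \{i \colon n \in I_i\}$ and its complement $T^c$, write $\beta_i(x_{I_i}) = \langle \Gamma_i(x_{I_i \setminus \{n\}}), x_n\rangle$ for $i \in T$ and $A(x_{[n]}) = \mathcal A(x_{[n-1]})(x_n)$ for suitable multilinear $\Gamma_i, \mathcal A$. For fixed $x_{[n-1]}$ in $V^c := \bigcap_{i \in T^c}\{\beta_i = 0\} \subseteq G_{[n-1]}$, the hypothesis says the linear map $\mathcal A(x_{[n-1]})$ kills $\operatorname{span}\{\Gamma_i(x_{I_i \setminus \{n\}}) \colon i \in T\}^\perp$, so its coimage lies in that span; hence there are $c_i(x_{[n-1]}) \in H$ with $A(x_{[n]}) = \sum_{i \in T} \beta_i(x_{I_i})\, c_i(x_{[n-1]})$ for all $x_{[n-1]} \in V^c$. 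If the $c_i$ could be chosen multilinear \emph{and} depending only on $x_{[n-1] \setminus I_i}$, then $A - \sum_{i \in T} \beta_i(x_{I_i})\, c_i(x_{[n-1] \setminus I_i})$ would be multilinear, vanishing on $V^c \times G_n$, so its slice map $x_{[n-1]} \mapsto (A - \sum\nolimits_i \ldots)(x_{[n-1]}, \cdot) \in \operatorname{Hom}(G_n, H)$ vanishes on $V^c$ — an $(n-1)$-variable problem with forms $\beta_i$ ($i \in T^c$) of support $\subseteq [n-1]$, which the inductive hypothesis (applied to the target $\operatorname{Hom}(G_n, H)$, crucially with a dimension-free bound) closes.

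The main obstacle is therefore the \emph{uniformization}: upgrading the pointwise coefficients $c_i(x_{[n-1]})$ to genuine multilinear maps on $G_{[n-1] \setminus I_i}$, while keeping the count polynomial in $m$ and free of $\dim H$. I expect this to be handled by passing to the generic point of $V^c$ — over its function field the containment of coimages is a linear-algebra identity, and since $V^c$ is cut out by multilinear forms the denominators that appear are benign and can be cleared — together with a normalization that first selects a sub-family of the $\Gamma_i$ of locally maximal rank so the $c_i$ become rational, then multilinear after clearing, with the support restriction forced by the fact that each $\beta_i$ is already linear in every $x_j$, $j \in I_i$. Tracking that each of the $n$ levels multiplies the number of terms by at most $\operatorname{poly}(m)$, and that no bound ever involves $\dim H$, completes the argument.
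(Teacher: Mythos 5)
Your high-level plan — apply $\textbf{Weak}(k)$ once, then convert the set-theoretic conclusion ``a low-codimensional multilinear variety sits inside $\{A=0\}$'' into a short partition-rank decomposition via a purely algebraic \emph{division lemma} — identifies the right intermediate target, but the division lemma is not a known algebraic fact, and the sketch of its proof is where the argument breaks down. The assertion you need is: if a multilinear $A\colon G_{[n]}\to H$ vanishes on $\bigcap_{i\in[m]}\{\beta_i=0\}$ with $\emptyset\ne I_i\subseteq[n]$, then $A$ has partition rank $\le\mathrm{poly}_n(m)$, uniformly in $\dim H$. Were this a linear-algebra/generic-point fact, it together with $\textbf{Weak}(k)$ would give $\textbf{Strong}(k)$ in one stroke; but in the paper the implication $\textbf{Weak}(k)\Rightarrow\textbf{Strong}(k)$ is not such a one-step reduction. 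Crucially, the paper's proof of Proposition~\ref{strongInverseInductionStep} re-invokes $\textbf{Weak}$ at \emph{every} level of an induction over up-sets $\mathcal F\subseteq\mathcal P[k-1]$: at each support set $S$, the forms $\beta_i^{\mathcal F}$ with $I_i^{\mathcal F}=S$ are split into a biased family $\gamma_i$ (which $\textbf{Weak}$ replaces by forms on strictly smaller supports, property~\textbf{(i)}) and a quasirandom family $\rho_i$ (property~\textbf{(ii)}). Only the latter can be ``peeled off,'' and only because of Proposition~\ref{nonzeroConn} (the one-sided regularity lemma), whose hypothesis is exactly the quasirandomness bound in~\textbf{(ii)} and whose conclusion — connectedness and bounded diameter of $\{\gamma=0,\rho\ne0\}$ inside the coordinate-change graph — is what makes the telescoping identity in Proposition~\ref{layerEquation} go through. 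Your proposal applies $\textbf{Weak}$ once and then hopes to finish algebraically; the paper applies it $O(2^k)$ more times, interleaved with bias estimates.

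The specific place your sketch is unconvincing is the ``uniformization'' step. You correctly observe that, fibrewise, $\mathcal A(x_{[n-1]})$ must land in $\operatorname{span}\{\Gamma_i(x_{I_i\setminus\{n\}})\}$, giving pointwise coefficients $c_i(x_{[n-1]})$, and you flag that making these simultaneously multilinear, supported on $x_{[n-1]\setminus I_i}$, and few in number is the crux. But ``pass to the generic point of $V^c$, select a subfamily of locally maximal rank, clear denominators'' does not resolve it: the rank of the span jumps on subvarieties of $V^c$, $V^c$ need not be irreducible, and there is no reason a generically-defined rational solution extends to a global multilinear one with the right variable-dependence. The paper instead sidesteps the solvability issue entirely by taking the $c_i$ to be the \emph{slices} $A(y_S^i,\cdot)$ of $A$ itself — trivially multilinear with the right support — and proves that this particular choice works by walking along a connected path in the set $R$ and telescoping; the existence of such a path is precisely where the analytic quasirandomness enters. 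Without that input, the same identity has no evident proof. (A secondary, structural point: the paper's induction is over up-sets of supports in $\mathcal P[k-1]$, eliminating one support class $S$ at a time, not over the number of coordinates $n$; forms with arbitrary support do not fit neatly into a ``split on whether $n\in I_i$'' recursion.)
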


We remark that the bounds on the constants claimed in the introduction, namely $C^{\bm{strong}}_k = 2^{k^{2^{O(k^2)}}}$ and $D^{\bm{strong}}_k = 2^{2^{O(k^2)}}$, follow from inequalities~\eqref{weakBounds} and~\eqref{strongBounds} which appear later in the paper. In fact, bounds of the same form hold in all results stated in this subsection.\\



Let $S \subset G_{[k]}$ and let $\alpha \colon G_{[k]} \to H$ be a multiaffine map. A layer of $\alpha$ is any set of the form $\{x_{[k]} \in G_{[k]} \colon \alpha(x_{[k]}) = \lambda\}$, for $\lambda \in H$. We say that layers of $\alpha$ \emph{internally $\epsilon$-approximate }$S$, if there are layers $L_1, \dots, L_m$ of $\alpha$ such that $S \supset L_i$ and $\Big|S \setminus \Big(\cup_{i \in [m]} L_i\Big)\Big| \leq \epsilon |G_{[k]}|$. Similarly, we say that layers of $\alpha$ \emph{externally $\epsilon$-approximate }$S$, if there are layers $L_1, \dots, L_m$ of $\alpha$ such that $S \subset \cup_{i \in [m]} L_i$ and $\Big|\Big(\cup_{i \in [m]} L_i\Big)  \setminus S\Big| \leq \epsilon |G_{[k]}|$.\\
\indent The next two results say that we may approximate internally and externally certain sets by low-codimensional varieties. In the first case, the sets we have in mind are dense varieties, and in the second case these are the sets of dense columns of a variety. 

\begin{theorem}[Simultaneous inner approximation of varieties - \textbf{Inner}($k$)]\label{innerAppThm}For given $k$, there are constants $C=C^{\bm{inner}}_k, D=D^{\bm{inner}}_k > 0$ with the following property. Let $\epsilon > 0$ and let $B_1, \dots, B_r \colon G_{[k]} \to H$ be multiaffine maps. For each $\lambda \in \mathbb{F}^r$, let $Z_\lambda = \{x_{[k]} \in G_{[k]} \colon \sum_{i \in [r]} \lambda_i B_i(x_{[k]}) = 0\}$. Then there is $s \leq C \Big(r \log_{|\mathbb{F}|}(|\mathbb{F}|\epsilon^{-1}) \Big)^D$, a multiaffine map $\beta \colon G_{[k]} \to \mathbb{F}^s$ such that for each $\lambda \in \mathbb{F}^r$ , layers of $\beta$ internally $\epsilon$-approximate $Z_\lambda$.\end{theorem}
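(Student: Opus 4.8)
The plan is to prove \textbf{Inner}($k$) by induction on $k$, with the inductive step peeling off the last coordinate and invoking \textbf{Weak}($k$) together with \textbf{Inner}($k-1$). For the base case $k=1$, each $B_i\colon G_1\to H$ is affine, $B_i(x)=L_i(x)+b_i$ with $L_i$ linear, and for $\lambda\in\mathbb{F}^r$ the linear map $\psi_\lambda=\sum_i\lambda_iL_i$ has $Z_\lambda$ either empty or a coset of $\ker\psi_\lambda$ of density $|\mathbb{F}|^{-\operatorname{rank}\psi_\lambda}$. Call $\lambda$ \emph{relevant} if $Z_\lambda\neq\emptyset$ and $\operatorname{rank}\psi_\lambda<\ell:=\lceil\log_{|\mathbb{F}|}(|\mathbb{F}|\epsilon^{-1})\rceil$; an irrelevant $\lambda$ has $|Z_\lambda|\le\epsilon|G_1|$, hence is internally $\epsilon$-approximated by the empty family. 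Build $V\le G_1$ greedily: start with $V=G_1$ and, while some relevant $\lambda$ satisfies $\psi_\lambda|_V\neq 0$, replace $V$ by $V\cap\ker\psi_\lambda$. Each step raises $\operatorname{codim}V$ by at most $\operatorname{rank}\psi_\lambda<\ell$, and since a newly chosen $\psi_\lambda$ does not vanish on the current $V$ while all previously chosen ones do, these $\psi_\lambda$ are linearly independent in $\operatorname{span}(L_1,\dots,L_r)$; hence there are at most $r$ steps and $\operatorname{codim}V\le r\ell\le r\log_{|\mathbb{F}|}(|\mathbb{F}|\epsilon^{-1})$. Taking $\beta\colon G_1\to\mathbb{F}^{\operatorname{codim}V}$ a linear surjection with kernel $V$, every relevant $Z_\lambda$ is an exact union of layers of $\beta$, which settles the base case.

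For the inductive step, fix $k\ge2$ and assume \textbf{Inner}($k-1$) and \textbf{Weak}($k$). Singling out the last coordinate, write $B_i(x_{[k-1]},x_k)=M_i(x_{[k-1]})(x_k)+c_i(x_{[k-1]})$ with $M_i\colon G_{[k-1]}\to\operatorname{Hom}(G_k,H)$ and $c_i\colon G_{[k-1]}\to H$ multiaffine, and put $M_\lambda=\sum_i\lambda_iM_i$, $c_\lambda=\sum_i\lambda_ic_i$; then each slice $(Z_\lambda)_{x_{[k-1]}}$ is empty or a coset of $\ker M_\lambda(x_{[k-1]})$ of codimension $\operatorname{rank}M_\lambda(x_{[k-1]})$ in $G_k$. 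With $\ell=\lceil\log_{|\mathbb{F}|}(3|\mathbb{F}|\epsilon^{-1})\rceil$, classify the columns $x_{[k-1]}$ as \emph{full} ($\operatorname{rank}M_\lambda(x_{[k-1]})=0$ and $c_\lambda(x_{[k-1]})=0$), \emph{thin} ($\operatorname{rank}M_\lambda(x_{[k-1]})>\ell$), or \emph{intermediate} (nonempty slice of codimension in $[1,\ell]$). The thin columns carry at most an $(\epsilon/3)$-fraction of the mass of each $Z_\lambda$ and may be discarded. For the full columns, $F_\lambda:=\{x_{[k-1]}:M_\lambda(x_{[k-1]})=0,\ c_\lambda(x_{[k-1]})=0\}$ is the zero set of $\sum_i\lambda_i(M_i,c_i)$, so the family $(F_\lambda)_\lambda$ has exactly the form handled by \textbf{Inner}($k-1$) applied to the $r$ multiaffine maps $(M_i,c_i)\colon G_{[k-1]}\to\operatorname{Hom}(G_k,H)\oplus H$ with error $\epsilon/3$; this yields a multiaffine $\beta_1\colon G_{[k-1]}\to\mathbb{F}^{s_1}$, with $s_1$ polynomial in $r$ and $\log_{|\mathbb{F}|}(|\mathbb{F}|\epsilon^{-1})$, whose layers internally $(\epsilon/3)$-approximate every $F_\lambda$, and pulling $\beta_1$ back to $G_{[k]}$ its layers $L'\times G_k$ with $L'\subseteq F_\lambda$ lie in $Z_\lambda$ and cover $F_\lambda\times G_k$ up to $(\epsilon/3)|G_{[k]}|$.

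The crux is the intermediate columns: one must build a further multiaffine map $\beta_2\colon G_{[k]}\to\mathbb{F}^{s_2}$, with $s_2$ polynomial in $r$ and $\log_{|\mathbb{F}|}(|\mathbb{F}|\epsilon^{-1})$, whose layers resolve, for every $\lambda$ and all but an $(\epsilon/3)$-fraction of the intermediate columns, the affine slice $(Z_\lambda)_{x_{[k-1]}}$. For a fixed column, the greedy argument from the base case applied to $M_1(x_{[k-1]}),\dots,M_r(x_{[k-1]})\in\operatorname{Hom}(G_k,H)$ shows that all slices over that column are cut out by an at most $r\ell$-dimensional space of linear forms on $G_k$; the difficulty is to realise these spaces uniformly over all columns by a single bounded-rank multiaffine map, since the naive choice $(x_{[k-1]},x_k)\mapsto(M_i(x_{[k-1]})(x_k))_{i\in[r]}$ is valued in $H^r$ and $\dim H$ is unbounded. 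This is where \textbf{Weak}($k$) is used: iterating it (as in its own proof) and combining with \textbf{Inner}($k-1$) applied to suitable auxiliary families of multiaffine forms built from the $M_i$, one shows that the columns on which the local slice data fails to factor through a fixed bounded-rank multiaffine map carry only an $(\epsilon/3)$-fraction of the mass of each $Z_\lambda$, so these columns too may be discarded. I expect this uniformisation step to be the principal obstacle, and it is what forces the tower-type dependence of the constants on $k$.

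Finally, setting $\beta=(\beta_1',\beta_2)$ with $\beta_1'(x_{[k]}):=\beta_1(x_{[k-1]})$ and $s=s_1+s_2$, the layers of $\beta$ contained in $Z_\lambda$ cover the full-column part up to $(\epsilon/3)|G_{[k]}|$ and all but $(\epsilon/3)|G_{[k]}|$ of the intermediate-column part, while the thin-column part has mass at most $(\epsilon/3)|G_{[k]}|$; hence the layers of $\beta$ internally $\epsilon$-approximate $Z_\lambda$, and $s$ is polynomial in $r$ and $\log_{|\mathbb{F}|}(|\mathbb{F}|\epsilon^{-1})$ as required. Unwinding the recursion, each step composes \textbf{Inner}($k-1$) with a bounded number of uses of \textbf{Weak}($k$), raising the exponent by a constant factor and the leading constant exponentially, which yields constants of the shape $C^{\bm{inner}}_k$, $D^{\bm{inner}}_k$ stated in the theorem.
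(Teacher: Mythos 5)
Your proposal takes a genuinely different route from the paper, but it contains an unfilled gap at exactly the step you yourself flag as ``the crux.''

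Your base case $k=1$ is correct: the greedy argument intersecting kernels of relevant $\psi_\lambda$ does terminate after at most $r$ steps (by linear independence of the chosen forms in $\spn(L_1,\dots,L_r)$), yielding a subspace $V$ of codimension at most $r\ell$ such that every relevant $Z_\lambda$ is an exact union of cosets of $V$ and every irrelevant one is small or empty.

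The problem is the intermediate columns in the inductive step. You correctly identify the obstruction: the pointwise data cutting out the slice $(Z_\lambda)_{x_{[k-1]}}$ in $G_k$ lives in $\operatorname{Hom}(G_k,H)$, whose dimension is unbounded, so it cannot be directly encoded in a bounded-rank multiaffine map. Your resolution --- ``iterating $\bm{Weak}(k)$ (as in its own proof) and combining with $\bm{Inner}(k-1)$ applied to suitable auxiliary families of multiaffine forms built from the $M_i$'' --- is a description of the difficulty, not a proof. You do not specify the auxiliary families, you do not say how $\bm{Weak}(k)$ produces a uniformization (the Weak theorem gives a containment of varieties, not a decomposition of a form as a sum of products, which is what one actually needs to resolve the slice data on layers), and you do not show the error analysis closes. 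As written, there is no argument that the ``bad'' intermediate columns carry mass at most $(\epsilon/3)|G_{[k]}|$ simultaneously for all $\lambda\in\mathbb{F}^r$.

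The paper avoids column slicing entirely. It proves the implication $\bm{Strong}(k)\Rightarrow\bm{Inner}(k-1)$ --- note that it is $\bm{Strong}$, not $\bm{Weak}$, that is used, and this distinction is material: the Strong inverse theorem writes a biased multilinear form as an explicit sum of products $\sum_j\gamma_j(x_{J_j})\Lambda_j(x_{I_0\setminus J_j})$, and it is exactly this product structure that supplies the uniform ``slice-resolving'' data you are missing. The paper's induction is not on $k$ with column peeling, but downward on the size of a down-set $\mathcal{F}\subseteq\mathcal{P}([k-1])$, tracking a family of $\mathcal{F}$-supported spanning maps $\Gamma_{[t]}$ and a layer map $\beta$. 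At each step one removes a maximal $I_0\in\mathcal{F}$, isolates the $\Psi_i$ component of each $\Gamma_i$ supported on $I_0$, splits the linear combinations $\lambda\cdot\Psi$ into biased ones (to which $\bm{Strong}$ is applied, producing the $\gamma_j$ for the new $\beta'$ and the strictly lower-support $\Lambda_j$) and unbiased ones (which kill the corresponding layers up to negligible density, by Lemma~\ref{biasHomog}). When $\mathcal{F}=\{\emptyset\}$ the spanning maps are constants and every surviving layer is entirely inside or outside $Z_\lambda$. To salvage your approach you would need to articulate a concrete mechanism playing the role that $\bm{Strong}(k)$ plays here; without it, the inductive step does not go through.
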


\begin{theorem}[Structure of a set of dense columns of a variety - \textbf{Columns}($k$)]\label{denseColumnsThm}For given $k$, there are constants $C=C^{\bm{columns}}_k, D=D^{\bm{columns}}_k > 0$ with the following property. Let $\alpha\colon G_{[k]} \to \mathbb{F}^r$ be a multiaffine map. Let $S \subset \mathbb{F}^r$ and $\epsilon > 0$. Define the set of $\epsilon$-dense columns as
\[X = \{x_{[k-1]} \in G_{[k-1]} \colon |\{y \in G_k \colon \alpha(x_{[k-1]}, y) \in S\}| \geq \epsilon |G_k|\}.\]
Then, there is $s \leq C \Big(r \log_{|\mathbb{F}|} (|\mathbb{F}|\epsilon^{-1})\Big)^D$, a multiaffine map $\beta \colon G_{[k-1]} \to \mathbb{F}^s$ such that layers of $\beta$ $\epsilon$-internally and $\epsilon$-externally approximate $X$.\end{theorem}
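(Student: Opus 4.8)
The plan is to pin down, via Fourier analysis in the $\mathbb{F}^r$-variable, exactly what data about a column $x_{[k-1]}$ decides whether it is dense, and then to encode that data in a bounded-complexity multiaffine map using Theorem~\ref{innerAppThm} for $k-1$ variables. Write $\alpha(x_{[k-1]},y)=c(x_{[k-1]})+M(x_{[k-1]},y)$, where $c(x_{[k-1]})=\alpha(x_{[k-1]},0)$ is multiaffine and $M(x_{[k-1]},\cdot)\colon G_k\to\mathbb{F}^r$ is linear for each $x_{[k-1]}$ and multiaffine in $x_{[k-1]}$, and set $V(x_{[k-1]})=\{\lambda\in\mathbb{F}^r:\lambda\cdot M(x_{[k-1]},y)=0\text{ for all }y\in G_k\}\le\mathbb{F}^r$. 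Expanding $\mathbbm{1}_S(z)=\sum_{\lambda\in\mathbb{F}^r}\widehat{\mathbbm{1}_S}(\lambda)\,\chi(\lambda\cdot z)$ and using that $\ex_{y\in G_k}\chi\bigl(\lambda\cdot\alpha(x_{[k-1]},y)\bigr)$ equals $\chi\bigl(\lambda\cdot c(x_{[k-1]})\bigr)$ when $\lambda\in V(x_{[k-1]})$ and $0$ otherwise, we obtain
\[\frac{|\{y\in G_k:\alpha(x_{[k-1]},y)\in S\}|}{|G_k|}=\sum_{\lambda\in V(x_{[k-1]})}\widehat{\mathbbm{1}_S}(\lambda)\,\chi\bigl(\lambda\cdot c(x_{[k-1]})\bigr).\]
Thus whether $x_{[k-1]}\in X$ depends only on the subspace $V(x_{[k-1]})$ together with the restriction to it of the linear functional $\lambda\mapsto\lambda\cdot c(x_{[k-1]})$. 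The functional is visible from the single multiaffine map $\beta_2:=(\alpha_1(\cdot,0),\dots,\alpha_r(\cdot,0))\colon G_{[k-1]}\to\mathbb{F}^r$, so everything reduces to producing a multiaffine map whose layers essentially refine the partition of $G_{[k-1]}$ according to $x_{[k-1]}\mapsto V(x_{[k-1]})$.

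The varieties at play are, for $\lambda\in\mathbb{F}^r$, the sets $Y_\lambda:=\{x_{[k-1]}:\lambda\in V(x_{[k-1]})\}=\{x_{[k-1]}:\lambda\cdot M(x_{[k-1]},\cdot)\equiv 0\}$, i.e.\ the zero set of the multiaffine map $x_{[k-1]}\mapsto\bigl(y\mapsto\lambda\cdot M(x_{[k-1]},y)\bigr)$ valued in $\operatorname{Hom}(G_k,\mathbb{F})$. Fixing a basis of $\operatorname{Hom}(G_k,\mathbb{F})$ and writing this map as $\sum_{i\in[r]}\lambda_iB_i$ with $B_i\colon G_{[k-1]}\to\mathbb{F}^{\dim G_k}$ multiaffine, the family $\{Y_\lambda\}_{\lambda\in\mathbb{F}^r}$ is exactly of the shape handled by Theorem~\ref{innerAppThm} applied to $G_{[k-1]}$. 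I would apply it with the small error parameter $\epsilon':=\epsilon^2|\mathbb{F}|^{-r}/C_0$, obtaining a multiaffine $\beta_1\colon G_{[k-1]}\to\mathbb{F}^{s_1}$ whose layers internally $\epsilon'$-approximate every $Y_\lambda$; since $\log_{|\mathbb{F}|}(|\mathbb{F}|\epsilon'^{-1})=O\bigl(r+\log_{|\mathbb{F}|}(|\mathbb{F}|\epsilon^{-1})\bigr)$, the complexity $s_1$ stays polynomial in $r$ and $\log_{|\mathbb{F}|}(|\mathbb{F}|\epsilon^{-1})$ — the point being that the exponential factor $|\mathbb{F}|^{-r}$, incurred by a union bound over all $\lambda$, is absorbed by the logarithm, so an internal statement with tiny error is all that is needed. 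The candidate map is $\beta:=(\beta_1,\beta_2)$.

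For a $\beta_1$-layer $L$ set $V_L:=\{\lambda:L\subseteq Y_\lambda\}$; because the $Y_\lambda$ are nested ($Y_\lambda\cap Y_\mu\subseteq Y_{a\lambda+b\mu}$ for all scalars $a,b$), $V_L$ is a subspace, and $V_L\subseteq V(x_{[k-1]})$ for each $x_{[k-1]}\in L$. Equality fails only for $x_{[k-1]}$ lying in $Y_\lambda\setminus W_\lambda$ for some $\lambda$, where $W_\lambda$ is the union of all $\beta_1$-layers contained in $Y_\lambda$; by the internal approximation the exceptional set $E$ satisfies $|E|\le|\mathbb{F}|^r\epsilon'|G_{[k-1]}|\le(\epsilon^2/C_0)|G_{[k-1]}|$. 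Outside $E$ the column density is a function of $\beta(x_{[k-1]})$ alone (use the displayed formula with $V(x_{[k-1]})=V_L$ and $\beta_2$ fixed), so there is a $\beta$-measurable function $\widetilde d$ agreeing with the column density off $E$, and hence the union of layers $\widetilde X:=\{\widetilde d\ge\epsilon\}$ satisfies $|X\triangle\widetilde X|\le|E|\le\epsilon|G_{[k-1]}|$. To finish I would take $X^{\mathrm{int}}$ to be the union of $\beta$-layers contained in $X$ and $X^{\mathrm{ext}}$ the union of $\beta$-layers meeting $X$, and bound $|X\setminus X^{\mathrm{int}}|$ and $|X^{\mathrm{ext}}\setminus X|$ by the mass of the layers that straddle the boundary of $X$.

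The main obstacle — and the technical heart of the matter — is precisely this last step: $X$ and $\widetilde X$ being close in measure does not by itself bound the mass of the straddling layers, because a single exceptional point can make an otherwise pure layer impure, and $E$, though of tiny measure, could a priori meet layers of large total mass. Overcoming this means arranging that $E$ itself is, up to an $\epsilon$-fraction of $G_{[k-1]}$, a union of $\beta$-layers. I expect the fix to be one of: (i) strengthen $\beta$ so that its layers resolve each $Y_\lambda$ not only internally but, using that the $Y_\lambda$ are nested, also ``from the complement side'', so that the locus where $V(\cdot)$ jumps off $V_L$ is itself layer-closed; or (ii) bootstrap on $\dim V(\cdot)$, which strictly increases on $E$, so that iterating the construction stabilizes — with the rounds organized in parallel rather than in sequence so as not to multiply the complexity more than polynomially. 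Keeping the accumulated complexity polynomial in $r$ and $\log_{|\mathbb{F}|}(|\mathbb{F}|\epsilon^{-1})$ is the delicate part, and this is where the interplay with Theorems~\ref{weakInvThm} and~\ref{innerAppThm} for fewer variables, together with the slack afforded by the logarithmic bounds, must be exploited carefully; rescaling $\epsilon$ at the end then gives the stated conclusion.
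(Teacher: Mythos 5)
Your framework is essentially the paper's, recast in Fourier language: membership of $x_{[k-1]}$ in $X$ depends only on the pair $\bigl(V(x_{[k-1]}),\,c(x_{[k-1]})\bigr)$, the level sets of $V(\cdot)$ are Boolean combinations of the varieties $Y_\lambda=\{x_{[k-1]}\colon\lambda\cdot A(x_{[k-1]})=0\}$, and Theorem~\ref{innerAppThm} supplies internal approximations of these. (The paper reaches the same structural description via Lemma~\ref{solCrit} and a coset decomposition rather than Fourier inversion; the two are dual and interchangeable.) You then correctly identify the genuine obstacle: a union of layers that is merely symmetric-difference-close to $X$ does not by itself yield the required internal and external approximations, so the layers of $\beta$ must resolve the $Y_\lambda$ from both sides. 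But there the proposal stops: you name two candidate fixes without carrying out either, and your remarks suggest you expect the ``complement side'' approximation to be expensive and to call for a delicate interplay with the fewer-variable inverse theorems and the slack in the logarithmic bounds. That is the gap.

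In fact the complement side is the cheap one, and the tool is already in the preliminaries. Proposition~\ref{BohrApproxSim}, via the dependent-random-choice trick of Lemma~\ref{BohrApprox} and needing no inverse theorem whatsoever, produces a single multiaffine map $\gamma$ of codimension $O(r^3\log\epsilon^{-1})$ whose layers externally approximate every $\{\lambda\cdot A=0\}$ simultaneously. The paper's approximating map is $\phi=(\alpha',\beta,\gamma)$, i.e.\ your $\beta_2$ and $\beta_1$ augmented by this extra $\gamma$ --- exactly your fix~(i). Once each $W_M=\{x_{[k-1]}\colon(\forall\mu\in M)\ \mu\cdot A(x_{[k-1]})=0\}$ is approximated internally (via $\beta$) and externally (via $\gamma$), both one-sided approximations propagate through intersections and unions, while taking complements swaps internal with external; pushing this through $V_\Lambda$, then $V_\Lambda\setminus\bigcup_{\Lambda\subsetneq M}V_M$, then the union over the at most $|\mathbb{F}|^{O(r^2)}$ relevant cosets $\Lambda$ only multiplies the error by $|\mathbb{F}|^{O(r^2)}$, which is absorbed by starting with inner and outer errors of size $\epsilon|\mathbb{F}|^{-\Theta(r^2)}$ and contributes merely polynomially through the logarithm. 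So your diagnosis and your fix~(i) are exactly right; what you were missing is that the external approximation needs none of the heavy machinery you were bracing for.
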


Finally, we prove a strong approximation result for the convolutions of the indicator function of a low-codimensional variety. We call it an \emph{almost $L^\infty$ approximation theorem} which sounds slightly oxymoronical, but is appropriate since we actually prove that the convolution can be approximated by a finite exponential sum on very structured set (a union of layers of a low-codimensional variety) of density $1-o(1)$. For this theorem, we need one more piece of notation. For a map $f \colon G_{[k]} \to \mathbb{C}$, we define its \emph{convolution in direction} $i$, denoted by $\bigconv{i}f \colon G_{[k]} \to \mathbb{C}$ as
\[\bigconv{i}f(x_{[k]}) = \ex_{y_i \in G_i} f(x_{[i-1]}, y_i + x_i, x_{[i + 1, k]})\overline{f(x_{[i-1]}, y_i, x_{[i + 1, k]})}.\] 
We also misuse notation slightly and for the given set $Z$ we also treat $Z$ as the indicator function in the expression below.

\begin{theorem}[Almost $L^\infty$ approximation theorem for convolutions of varieties of low codimension - \textbf{Conv}($k$)]\label{convAppThm}For given $l \in [k]$, there are constants $C=C^{\bm{conv}}_{k, l}, D=D^{\bm{conv}}_{k, l} > 0$ with the following property. Let $\alpha \colon G_{[k]} \to \mathbb{F}^r$ be a multilinear map, $Z = \{x_{[k]} \in G_{[k]} \colon \alpha(x_{[k]}) = 0\}$ and let $\epsilon > 0$. Then there are $s, t \leq C \Big(r\log_{|\mathbb{F}|}(|\mathbb{F}|\epsilon^{-1}) \Big)^D$, multiaffine forms $\beta_i \colon G_{[k]} \to \mathbb{F}$ for $i \in [s]$, multiaffine map $\gamma \colon G_{[k] \setminus \{l\}} \to \mathbb{F}^t$, constants $c_1, \dots, c_m \in \mathbb{C}$, multiaffine maps $\rho_1, \dots, \rho_m \in \spn\{\beta_{[s]}\}$ and layers $L_1, \dots, L_n$ of $\gamma$ such that
\[\bigg| \bigconv{l}\bigconv{l-1} \cdots \bigconv{1} Z(x_{[k]}) - \sum_{i \in [m]} c_i \chi\Big(\rho_i(x_{[k]})\Big)\bigg| \leq \epsilon\]
for all $x_{[k]} \in G_{[k]} \setminus \Big((\cup_{i \in [n]} L_i) \times G_l\Big)$, $|\cup_{i \in [n]} L_i| \leq \epsilon |G_{[k] \setminus \{l\}}|$ and $\sum_{i \in [m]} |c_i| \leq 1$.
\end{theorem}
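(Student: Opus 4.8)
I would prove this by induction on $l$, using Columns($k$) and Inner($k$) (and, in the background, Strong($k$)) as the main external inputs.

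\emph{Base case $l=1$.} Since $\alpha$ is linear in $x_1$, the slice $Z_{x_{[2,k]}}=\{y_1\in G_1:\alpha(y_1,x_{[2,k]})=0\}$ is a subspace of $G_1$, and from $\alpha(x_1+y_1,x_{[2,k]})=\alpha(x_1,x_{[2,k]})+\alpha(y_1,x_{[2,k]})$ one gets at once
\[\bigconv{1}Z(x_{[k]})=\mathbbm{1}_Z(x_{[k]})\cdot\psi(x_{[2,k]}),\qquad\psi(x_{[2,k]})=\frac{|Z_{x_{[2,k]}}|}{|G_1|}=|\mathbb{F}|^{-\operatorname{rank}(y_1\mapsto\alpha(y_1,x_{[2,k]}))},\]
so $\psi$ is the density of the $x_{[2,k]}$-column of $Z$ and takes only the values $|\mathbb{F}|^{-i}$, $0\le i\le r$. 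Writing $\mathbbm{1}_Z=\ex_{w\in\mathbb{F}^r}\chi(w\cdot\alpha)$ expresses the first factor as an exponential sum of multilinear forms in $\spn\{\alpha_1,\dots,\alpha_r\}$ of coefficient $\ell^1$-norm $1$; expanding $\psi(x_{[2,k]})=\ex_{y_1}\mathbbm{1}[\alpha(y_1,x_{[2,k]})=0]=\ex_{y_1,\,w\in\mathbb{F}^r}\chi\big(w\cdot\alpha(y_1,x_{[2,k]})\big)$ and collecting terms shows that $\psi$ itself is a non-negative combination $\sum_\sigma c_\sigma\chi(\sigma)$ of characters of multilinear forms on $G_{[2,k]}$ with $\sum_\sigma c_\sigma=\psi(0)=1$ — that is, a combination of characters of forms in $\spn\{\zeta\cdot B_j:\zeta\in G_1,\ j\in[r]\}$ (where $B_j\colon G_{[2,k]}\to G_1$ are the slices of $\alpha_j$) with $\ell^1$-norm exactly $1$. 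The difficulty is that this span can have dimension $r\dim G_1$, far too large, so the real content of the base case is to replace $\psi$, off a small union of layers, by an exponential sum of multiaffine forms in a span of dimension $\le C(r\log_{|\mathbb{F}|}(|\mathbb{F}|\epsilon^{-1}))^D$ \emph{without} letting the $\ell^1$-norm exceed $1$. The plan is: apply Columns($k$) (with $G_1$ in the role of the last coordinate, $S=\{0\}$, and the thresholds $|\mathbb{F}|^{-i}$ for $i<\lceil\log_{|\mathbb{F}|}\epsilon^{-1}\rceil$) to get a single low-codimensional multiaffine $\gamma$ whose layers internally and externally capture the level sets of $\operatorname{rank}(y_1\mapsto\alpha(y_1,\cdot))$; then replace $\psi$ by its average over the fibres of $\gamma$. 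When $k=2$, where $\gamma$ may be taken linear, this fibrewise average of the positive-definite function $\psi$ is a convolution with the uniform measure on $\ker\gamma$, hence again positive definite, so its $\ell^1$-norm equals its value at $0$, which is $\le1$; for $k\ge3$ the fibres are not subgroups and this step has to be argued more carefully. Finally multiply the resulting sum for $\psi$ by $\mathbbm{1}_Z=\ex_w\chi(w\cdot\alpha)$, using that a product of characters of multiaffine forms is again such; the exceptional set is (the small union of layers of $\gamma$ where this fails) times $G_1$, of the required shape.

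\emph{Inductive step $l-1\Rightarrow l$.} Put $f=\bigconv{l-1}\cdots\bigconv{1}Z$ and apply Conv($k$) with parameter $l-1$ and a small auxiliary error $\epsilon_1$: $f(x_{[k]})\approx\sum_j c_j\chi(\rho_j(x_{[k]}))$ with $\rho_j\in\spn\{\beta^{(0)}_{[s_0]}\}$ and $\sum_j|c_j|\le1$, valid outside $E_0\times G_{l-1}$ with $E_0$ a union of layers of a multiaffine $\gamma_0\colon G_{[k]\setminus\{l-1\}}\to\mathbb{F}^{t_0}$ of measure $\le\epsilon_1$. Substituting into
\[\bigconv{l}f(x_{[k]})=\ex_{y_l}f(x_{[l-1]},y_l+x_l,x_{[l+1,k]})\,\overline{f(x_{[l-1]},y_l,x_{[l+1,k]})}\]
gives $\sum_{j,j'}c_j\overline{c_{j'}}\,\ex_{y_l}\chi\big(\rho_j(x_{[l-1]},y_l+x_l,x_{[l+1,k]})-\rho_{j'}(x_{[l-1]},y_l,x_{[l+1,k]})\big)$ up to an error governed by the ``bad'' $y_l$ — those for which one of the two points lands in $E_0\times G_{l-1}$; this bad set is a union of translates of layers of $\gamma_0$ in the $l$-th coordinate, and Columns($k$) (in direction $l$, applied to $E_0$) produces a multiaffine map on $G_{[k]\setminus\{l\}}$ whose layers capture, and of small measure, the set of $x_{[k]\setminus\{l\}}$ for which this bad set is too large; this goes into the exceptional set. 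Writing $\rho_j(\dots,u,\dots)=\ell_j(x_{[k]\setminus\{l\}})\cdot u+m_j(x_{[k]\setminus\{l\}})$ with $\ell_j$ multilinear into $G_l$ and $m_j$ independent of $x_l$, the $y_l$-average equals $\mathbbm{1}[\ell_j(x_{[k]\setminus\{l\}})=\ell_{j'}(x_{[k]\setminus\{l\}})]\,\chi\big((\rho_j-m_{j'})(x_{[k]})\big)$, and $\rho_j-m_{j'}$ is a multiaffine form on $G_{[k]}$ in the span of $\beta^{(0)}_{[s_0]}$ together with their ``$x_l=0$ parts'', a space of dimension $\le2s_0$. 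All differences $\ell_j-\ell_{j'}$ lie in the $\le s_0$-dimensional space $\mathcal L$ spanned by the $l$-th linear parts of $\beta^{(0)}_{[s_0]}$, so I would run Inner($k$) on a basis of $\mathcal L$ to obtain one multiaffine $\gamma_1$ on $G_{[k]\setminus\{l\}}$, of codimension polynomial in $r\log_{|\mathbb{F}|}(|\mathbb{F}|\epsilon^{-1})$, such that off a small union of layers the pattern of which $\ell_j$ equal which $\ell_{j'}$ is determined by the layer of $\gamma_1$ (pairs coming from high-rank elements of $\mathcal L$, where $\{\ell_j=\ell_{j'}\}$ is negligible, are discarded up to a small error). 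On each such layer $\bigconv{l}f$ is a fixed exponential sum of multiaffine forms in a span of dimension $\le2s_0+\operatorname{codim}\gamma_1$ of $\ell^1$-norm $\le(\sum_j|c_j|)^2\le1$; reassembling over the layers of $\gamma_1$ (the same fibre-averaging step as in the base case is needed to keep the $\ell^1$-norm $\le1$) and collecting $\gamma_0,\gamma_1$ and the auxiliary maps into a single $\gamma$ gives the claimed form, the parameter bounds following by composing those of Conv($k$, $l-1$), Inner($k$) and Columns($k$).

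\emph{Where the difficulty lies.} Every other step — the convolution identities, the $y_l$-averaging, the discarding of high-rank varieties and bad $y_l$, the composition of error and dimension bounds — is routine. The one genuinely hard point, present in both the base case and the reassembly over layers of $\gamma_1$, is to turn ``$\psi$ is (close to) a function of a low-codimensional multiaffine map $\gamma$'' into an honest exponential sum of multiaffine forms with coefficient $\ell^1$-norm at most $1$: a naive Fourier expansion in $\gamma$ has $\ell^1$-norm growing with the number of layers and with $|\mathbb{F}|$, and the prototypical good case (where $\alpha$ is ``diagonal'' and $\psi=\prod_j(|\mathbb{F}|^{-1}+(1-|\mathbb{F}|^{-1})\mathbbm{1}[x^{(j)}=0])$ has $\ell^1$-norm exactly $1$) shows that what is needed is an essentially multiplicative, bounded-$\ell^1$ representation. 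For linear $\gamma$ this is supplied by positive-definiteness; for multiaffine $\gamma$ of higher degree the fibres are not subgroups and one must extract from Columns($k$)/Inner($k$) (and Strong($k$)) enough extra algebraic structure — low-rank pieces of the relevant forms and quotients of multiaffine maps — to control the $\ell^1$-norm of the fibre average. This is the technical heart of the argument.
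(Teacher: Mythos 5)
Your high-level architecture matches the paper's: induction on $l$ with the recursive Conv approximation, expansion of $\bigconv{l}$ into pairs $(i,j)$ that produces indicators $\mathbbm{1}\big((\Gamma_i-\Gamma_j)(x_{[k]\setminus\{l\}})=0\big)$, and Inner/Columns to handle exceptional sets (though the paper invokes \textbf{Inner}$(k-1)$ and \textbf{Columns}$(k-1)$, not $\textbf{Inner}(k)$ and $\textbf{Columns}(k)$, since the relevant maps live on $G_{[k]\setminus\{l\}}$ and $G_{[k]\setminus\{l-1\}}$; also the paper takes the trivial base case $l=0$, where $\bigconv{}Z=\mathbbm{1}_Z=|\mathbb{F}|^{-r}\sum_{\lambda}\chi(\lambda\cdot\alpha)$, rather than your $l=1$, which already requires the full machinery).

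The genuine gap is precisely the point you single out as ``the technical heart'': controlling the $\ell^1$-norm of the coefficients. You propose positive-definiteness, fibre-averaging over layers of an Inner-map $\gamma_1$, and ``extracting extra algebraic structure'' — and you correctly flag that for $k\ge3$ you do not know how to make this work. The paper's resolution is much simpler and goes in the opposite direction. Rather than approximating $\mathbbm{1}(\Gamma_i-\Gamma_j=0)$ from the \emph{inside} by layers of an Inner map (which indeed would produce a mess to Fourier-expand), it approximates it from the \emph{outside} by a low-codimensional variety, using Lemma~\ref{BohrApproxSim} (simultaneous dependent random choice). This yields multiaffine $\tau_1,\ldots,\tau_s\colon G_{[k]\setminus\{l\}}\to\mathbb{F}^u$ with $u$ polynomially bounded and vectors $\mu^{ij}\in\mathbb{F}^s$ such that $\{\Gamma_i-\Gamma_j=0\}\subset\{\mu^{ij}\cdot\tau=0\}$ with small symmetric difference. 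The replacement indicator is then exactly
\[\mathbbm{1}\big(\mu^{ij}\cdot\tau(x)=0\big)=|\mathbb{F}|^{-u}\sum_{\nu\in\mathbb{F}^u}\chi\Big(\sum_w\mu^{ij}_w\,\tau_w(x)\cdot\nu\Big),\]
an \emph{exact} character sum over only $|\mathbb{F}|^u$ multiaffine forms, with $\ell^1$-norm exactly $1$. Plugging this in gives total $\ell^1$-norm $\sum_{i,j}|c_i||c_j|\le(\sum_i|c_i|)^2\le1$ with no further work, and the error from swapping $\mathbbm{1}(\Gamma_i-\Gamma_j=0)$ for $\mathbbm{1}(\mu^{ij}\cdot\tau=0)$ is absorbed into the exceptional set. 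Inner$(k-1)$ is then used \emph{only} to package that exceptional set (together with the $E$-column set captured by Columns$(k-1)$) as a small union of layers of a single low-codimensional map. So there is no need for a positive-definiteness or fibre-averaging argument at all; the whole difficulty dissolves once you use the external Bohr-type approximation, because the indicator of the zero set of a multiaffine map into $\mathbb{F}^u$ already comes with a bounded-$\ell^1$, low-complexity expansion, in contrast to an arbitrary function of an Inner-type $\gamma$.

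Without this step your plan, as written, does not close: the fibre-averaging argument is only justified for $k=2$ (linear $\gamma$, subgroup fibres), and you explicitly leave the $k\ge3$ case open. That is the missing idea.
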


The proof naturally splits into five parts, each showing one of the following implications.
\[\begin{split}\bm{Weak}(k) \implies \bm{Strong}(k) \implies \bm{Inner}(k-1) &\implies \bm{Columns}(k) \\
\Big(\bm{Inner}(k-1) \land \bm{Columns}(k-1)\Big) &\implies \bm{Conv}(k) \implies \bm{Weak}(k+1).\end{split}\]
To complete this inductive scheme, we note that $\bm{Strong}(2)$ holds, and this is a simple consequence of linear algebra. Indeed, if $\alpha \colon G_1 \times G_2\to\mathbb{F}$ is a bilinear form such that $\ex_{x,y} \chi(\alpha(x,y)) \geq c$, writing $\alpha(x,y) = A(x) \cdot y$ for a linear map $A \colon G_1\to G_2$, we see that $|\{A=0\}|\geq c |G_1|$. By rank-nullity theorem, $A$ has rank $r \leq \log_{|\mathbb{F}|} c^{-1}$, thus, there are $v_1, \dots, v_r \in G_2$, and linear forms $\beta_1, \dots, \beta_r \colon G_1 \to \mathbb{F}$ such that $(\forall x \in G_1) A(x) = \sum_{i \in [r]} v_i \beta_i(x)$. Thus $\alpha(x,y) = \sum_{i \in [r]} \beta_i(x) (v_i \cdot y)$, as desired.\\

In some sense, all the results above can be seen as corollaries of Theorem~\ref{strongInvThm} (or any other theorem listed here), but this would not be an entirely correct viewpoint since the proof has the structure outlined here. Still, the deduction of the strong inverse theorem from the weak one occupies the largest part of the proof. The crucial idea in this part of the proof is the following proposition. To state it, we need to introduce a notion of connectivity for subsets of $G_{[k]}$. Namely, we consider $G_{[k]}$ as vertices of a graph $\mathcal{G}$ with edges between points that differ in exactly one coordinate. We say that a set $S \subset G_{[k]}$ is \emph{connected} if $\mathcal{G}[S]$ is connected. The \emph{diameter} of $S$ is the largest distance between two vertices in the graph $\mathcal{G}[S]$.

\begin{proposition}[One-sided regularity lemma]\label{nonzeroConnIntro}Let $\rho \colon G_{[k]} \to \mathbb{F}, \gamma_i \colon G_{I_i} \to \mathbb{F}$, $i \in [r]$ be multilinear maps. Let $F = \{i \in [r] \colon I_i = [k]\}$. Suppose that
\[\ex_{x_1, \dots, x_k} \chi\Big(\rho(x_{[k]}) - \sum\limits_{i \in F} \lambda_i \gamma_i(x_{[k]})\Big) \leq \eta = 2^{-2k} |\mathbb{F}|^{-(k+1)(3r+2)},\]
for any choice of $\lambda \in \mathbb{F}^F$. Then, the set $\{x_{[k]} \in G_{[k]} \colon (\forall i \in [r])\gamma_i(x_{I_i}) = 0, \rho(x_{[k]}) \not= 0\}$ is connected and of diameter at most $(2k+1)(2^k-1)$.
\end{proposition}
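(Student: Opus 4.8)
The plan is to analyze the set $N = \{x_{[k]} \in G_{[k]} \colon (\forall i)\, \gamma_i(x_{I_i}) = 0,\ \rho(x_{[k]}) \neq 0\}$ by building paths between its points that change one coordinate at a time, always staying inside $N$. First I would record the qualitative meaning of the hypothesis: the bound $\ex_{x_{[k]}} \chi\big(\rho - \sum_{i\in F}\lambda_i\gamma_i\big)\le \eta$ for every $\lambda \in \mathbb{F}^F$ says that, after conditioning on the values of the $\gamma_i$ with $i\in F$ (the ``full'' forms, depending on all coordinates), the form $\rho$ is still far from being constant on the relevant slice — indeed it is close to equidistributed. Via the standard Fourier/averaging identity, this translates into a lower bound: for ``most'' fixings of $x_J$ on any proper subset $J\subsetneq[k]$, the restricted variety $\{x_{[k]\setminus J} \colon \gamma_i(x_{I_i})=0\ \forall i\} $ is dense in its ambient slice (codimension $\le r$, say), AND on a positive proportion of that slice $\rho\neq 0$. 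The point of the very small choice $\eta = 2^{-2k}|\mathbb{F}|^{-(k+1)(3r+2)}$ is exactly to make such ``most-slices-are-generic'' statements go through uniformly after iterating $k$ times.

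The core is then a connectivity argument by induction on $k$. Given two points $x_{[k]}, x'_{[k]} \in N$, I want to connect them. The natural move is a two-step plan: from $x_{[k]}$, first travel (changing only the last coordinate $x_k$) to a point $\tilde x_{[k]}$ that is ``generic'' in an appropriate sense, then change the first $k-1$ coordinates one at a time using the inductive hypothesis on the slice $G_{[k-1]} \times \{\tilde x_k\}$ (where the relevant forms $\gamma_i$ restricted to this slice satisfy the $(k-1)$-variable hypothesis because $\tilde x_k$ is generic), and symmetrically from $x'_{[k]}$; finally connect the two generic endpoints. The diameter count $(2k+1)(2^k-1)$ comes out of this recursion: one gets a bound of the shape $d_k \le 2 d_{k-1} + (2k+1)$ (the extra constant steps being the coordinate-$k$ moves and the final linking move), with $d_1$ a small constant, which solves to the claimed value. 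The key technical lemma to make each ``travel to a generic point in direction $k$'' step work is: if $S_{x_{[k-1]}}$ (the slice of the variety over a generic $x_{[k-1]}$) is a dense subvariety of $G_k$ on which $\rho(x_{[k-1]},\cdot)$ is not identically zero, then for any two points with $\rho\neq 0$ in that slice there is a single intermediate value $y\in G_k$ with $\gamma_i(x_{I_i})=0$ for all $i$ AND $\rho\neq 0$ at all three points — here one uses that a union of a bounded-codimension subspace and the zero set of one nonzero linear functional cannot cover a dense subset of $G_k$, a simple counting estimate since $|\mathbb{F}|^{-r} - |\mathbb{F}|^{-1} > 0$ is false only when $r=1$, so one needs a little more care (intersecting with the slice rather than all of $G_k$), which is precisely what the density lower bounds from the previous paragraph supply.

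The main obstacle, and the part requiring genuine care rather than bookkeeping, is propagating genericity down the induction: when I fix $x_k = \tilde x_k$ and want to apply the $k-1$ case to the forms $\gamma_i|_{x_k=\tilde x_k}$ and $\rho|_{x_k = \tilde x_k}$, I must verify that the restricted forms still satisfy a hypothesis of the same shape, i.e.\ that $\ex_{x_{[k-1]}} \chi\big(\rho(x_{[k-1]},\tilde x_k) - \sum \lambda_i \gamma_i(\ldots)\big)$ is small for all $\lambda$ and also that the full/non-full dichotomy ($I_i = [k]$ versus $I_i\subsetneq[k]$) behaves correctly after restriction — forms that were full become forms on $[k-1]$ after fixing $x_k$, which is why the index set $F$ and the exceptional average enter. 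This is where the quantitative slack in $\eta$ is consumed: one shows that the set of ``bad'' $\tilde x_k$ (for which the restricted average is large for some $\lambda$, or for which $\rho$ vanishes identically on the slice) has density at most $|\mathbb{F}|^{r+1}\sqrt{\eta}$ or so by Cauchy–Schwarz / averaging, which by the choice of $\eta$ is far below the density $\ge \eta^{1/2}$ (crudely) of the slice over which we have freedom to move — so a good $\tilde x_k$ reachable from $x_{[k]}$ by a single coordinate-$k$ move always exists. Assembling these density estimates and the recursion for the diameter completes the argument.
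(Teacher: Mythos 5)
Your high-level plan — induction on $k$, a first move in the $k$-th coordinate into a ``good'' slice, an averaging argument to show that most slices satisfy the $(k{-}1)$-variable hypothesis (so the inductive hypothesis gives bounded-diameter connectivity there), and a final density argument to pull exceptional points into the dense well-connected core — matches the paper's strategy in structure and even in the shape of the diameter recursion. But the ``key technical lemma'' you isolate is not the one the argument actually needs, and this is where your sketch has a genuine gap. You state a within-slice claim (two $z$-values over the \emph{same} $x_{[k-1]}$ have a common intermediate $y \in G_k$), which is both trivial (two points over the same $x_{[k-1]}$ already differ in only one coordinate, hence are adjacent in $\mathcal{G}$) and not what's missing. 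What is actually required to close the path between $x_{[k]}$ and $x'_{[k]}$ is a two-point simultaneous statement about two \emph{different} fixings $x_{[k-1]}, u_{[k-1]} \in G_{[k-1]}$: one needs a single $z \in G_k$ with $(x_{[k-1]},z)$ and $(u_{[k-1]},z)$ both in the set. This is the paper's Observation (the fact that if $v_1, v_2 \notin \operatorname{span}\{u_i\}$ then some $z$ kills all $u_i$ while keeping $v_1\cdot z, v_2\cdot z \neq 0$), and the character-sum bound plus Lemma~\ref{biasHomog} is used exactly to show the spanning condition fails for at most a tiny fraction of pairs. Without that, your step 4 (``finally connect the two generic endpoints'') is unfounded: the two slices you land in, $S_{k:\tilde x_k}$ and $S_{k:\tilde x'_k}$, need not intersect at any $w_{[k-1]}$ reachable from both starting points, and their densities are far too small for a union bound to force an intersection.

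Concretely, the paper closes the path by first fixing a hub $u_{[k-1]}$ in the dense set $S_0$ that is ``good'' for both $x_{[k-1]}$ and $y_{[k-1]}$ (this is where the pair-counting estimate and the observation above enter), then finding $z, z'$ with $(x_{[k-1]},z),(u_{[k-1]},z)$ and $(y_{[k-1]},z'),(u_{[k-1]},z')$ all in the set, routing through the connected slices $S_{k:z}$ and $S_{k:z'}$, and joining $(u_{[k-1]},z)$ to $(u_{[k-1]},z')$ in one edge. Your sketch has the right scaffolding — $\eta$ small enough to beat the $O(|\mathbb{F}|^r\sqrt{\eta})$ bad-pair density, slice connectivity from the inductive hypothesis, a final distance-$k$ cleanup — but it needs the two-point span observation and the hub-routing step to actually produce a path; the lemma you wrote down in their place does no work.
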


Thus, if the form $\rho$ is sufficiently quasirandom w.r.t.\ other forms $\gamma_i$, then the set $\{x_{[k]} \in G_{[k]} \colon (\forall i \in [r])\gamma_i(x_{I_i}) = 0, \rho(x_{[k]}) \not= 0\}$ is well-behaved. For our purposes, this means that we may easily remove $\rho$ from the collection of the considered maps. On the other hand, if neither form is sufficiently quasirandom, then we may replace them by forms that depend on fewer coordinates using the weak inverse theorem.\\

Another idea that plays a very important role in the proof is the dependent random choice, which takes a particularly simple form in the algebraic setting and allows us to externally approximate dense varieties by low-codimensional varieties very efficiently (Lemma~\ref{BohrApprox}).\\

\noindent\textbf{Acknowledgements.} I would like to acknowledge the support of the Ministry of Education, Science and Technological Development of the Republic of Serbia, Grant ON174026. I would also like to thank the anonymous referee for a very careful reading.

\boldSection{Preliminaries}

From now on, we adopt a non-standard convention and write $\log$, without subscripts, to be a slightly modified version of the logarithm. Namely, for positive real $x$, we write $\log x = \log_{|\mathbb{F}|} (|\mathbb{F}| x) = (\log_{|\mathbb{F}|} x ) + 1$. This has the merit of being greater or equal to 1, when $x \geq 1$, which simplifies the calculations. If we write $\log_{|\mathbb{F}|}$, we still have its usual meaning in mind.\\

As a warm-up, we show that low-codimensional varieties are necessarily dense. We use this very simple fact in the proofs that follow without explicitly referring to the next lemma.

\begin{lemma}\label{basicDensityL}Let $B$ be a variety of codimension $r$ in $G_{[k]}$. Let $x_{[k]} \in B$. Then there are at least $|\mathbb{F}|^{-kr}|G_{[k]}|$ points in $B$ at distance\footnote{In the induced graph $\mathcal{G}[B]$, where $\mathcal{G}$ has the same meaning as in the introduction.} at most $k$ from $x_{[k]}$. In particular, if $B$ is non-empty, then $|B| \geq |\mathbb{F}|^{-kr}|G_{[k]}|$.\end{lemma}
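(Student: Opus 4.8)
The plan is to proceed by induction on $k$, exploiting the multiaffine (here multilinear) structure one coordinate at a time. The base case $k=1$ is immediate: a variety of codimension $r$ in $G_1$ is an affine subspace (a coset of the kernel of a linear map $G_1 \to \mathbb{F}^r$), so either it is empty or it has size at least $|\mathbb{F}|^{-r}|G_1| \geq |\mathbb{F}|^{-r}|G_1|$, and every point of it is at distance at most $1$ (in fact $0$ or $1$) from $x_1$ within the variety since the variety is closed under the graph-adjacency inside an affine subspace — actually, any two points of an affine subspace of dimension $\geq 1$ differ in one coordinate, so the whole variety lies at distance $\leq 1$ from $x_1$ (or distance $0$ if it is a single point). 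This handles $k=1$ with the claimed bound.

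For the inductive step, write $B = \{x_{[k]} : \beta(x_{[k]}) = 0\}$ where $\beta \colon G_{[k]} \to \mathbb{F}^r$ is multiaffine, and fix $x_{[k]} \in B$. First I would freeze the last coordinate at $x_k$ and consider the slice $B_{k : x_k} = \{y_{[k-1]} \in G_{[k-1]} : \beta(y_{[k-1]}, x_k) = 0\}$. Since $\beta(\cdot, x_k)$ is a multiaffine map $G_{[k-1]} \to \mathbb{F}^r$, this slice is a variety of codimension at most $r$ in $G_{[k-1]}$, and it contains $x_{[k-1]}$. By the induction hypothesis there are at least $|\mathbb{F}|^{-(k-1)r}|G_{[k-1]}|$ points $y_{[k-1]}$ in this slice at distance at most $k-1$ from $x_{[k-1]}$ inside the slice; each such point, appended with $x_k$, lies in $B$ at distance at most $k-1$ from $x_{[k]}$. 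Now from each such point $(y_{[k-1]}, x_k)$ I want to vary the last coordinate: the fibre $\{z \in G_k : \beta(y_{[k-1]}, z) = 0\}$ is the zero set of an affine map $G_k \to \mathbb{F}^r$, it contains $x_k$, hence it is a coset of a subspace of codimension at most $r$ and has size at least $|\mathbb{F}|^{-r}|G_k|$; every point of this fibre is at distance at most $1$ from $(y_{[k-1]}, x_k)$ along direction $k$. Combining, I obtain at least $|\mathbb{F}|^{-(k-1)r} |\mathbb{F}|^{-r} |G_{[k-1]}||G_k| = |\mathbb{F}|^{-kr}|G_{[k]}|$ points of $B$, each at distance at most $(k-1) + 1 = k$ from $x_{[k]}$ inside $\mathcal{G}[B]$ — using that concatenating a path within the slice (which stays in $B$ since the slice lies in $B$) with one edge in direction $k$ (which also stays in $B$) gives a path in $\mathcal{G}[B]$.

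One point that needs a little care rather than being a genuine obstacle: I must make sure the counted points are distinct, which is clear since distinct pairs $(y_{[k-1]}, z)$ are distinct points of $G_{[k]}$; and I must make sure the intermediate vertices of the concatenated path genuinely lie in $B$, not just the endpoints — this holds because the path within the $(k-1)$-dimensional slice consists of points $(\cdot, x_k)$ all satisfying $\beta(\cdot, x_k)=0$, and the final edge moves within the fibre $\{\beta(y_{[k-1]}, \cdot) = 0\}$. So the whole concatenated walk stays in $B$. The last sentence of the lemma ("in particular, if $B$ is non-empty, then $|B| \geq |\mathbb{F}|^{-kr}|G_{[k]}|$") is then an immediate consequence by dropping the distance qualifier. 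I expect the main conceptual content to be exactly this slicing-and-fibering decomposition of a multiaffine variety into a lower-dimensional multiaffine variety times affine fibres; everything else is bookkeeping on distances and multiplying the two density bounds.
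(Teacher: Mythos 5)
Your proof is correct and is essentially the same argument as the paper's: both peel off one coordinate at a time, using the fact that each one-coordinate fibre of a multiaffine variety through a known point is a non-empty coset of codimension at most $r$. The only cosmetic difference is that you organize it as an outer induction on $k$ (reducing to the slice $B_{k\colon x_k}$ and then fibering the last coordinate), whereas the paper keeps $k$ fixed and runs an internal induction building sets $Y_1, \dots, Y_k$ of prefixes coordinate by coordinate; the bookkeeping of densities ($|\mathbb{F}|^{-r}$ per coordinate) and distances ($+1$ per coordinate) is identical.
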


\begin{proof}For $i \in [k]$, we show that there is $Y_i \subset G_{[i]}$ of size $|Y_i| \geq |\mathbb{F}|^{-ir}|G_{[i]}|$ such that for each $y_{[i]} \in Y_i$, the point $(y_{[i]}, x_{[i+1,k]})$ belongs to $B$ and is at distance at most $i$ from $x_{[k]}$. Let $\beta \colon G_{[k]} \to \mathbb{F}^r$ be the multiaffine map defining $B$, thus $B = \{\beta = \lambda\}$, for some $\lambda \in \mathbb{F}^r$. For $i = 1$, we may take $Y_1 = \{y_1 \in G_1 \colon \beta(y_1, x_{[2, k]}) = \lambda\} \times \{x_{[2,k]}\}$. The projection of this set in $G_1$ is a non-empty (since it contains $x_1$) coset of codimension at most $r$, hence the claim follows. Suppose that the claim holds for some $i \leq k-1$. Similarly as in the previous case, for each $y_{[i]} \in Y_i$ look at $Z(y_{[i]}) = \{z_{i+1} \in G_{i+1} \colon \beta(y_{[i]}, z_{i+1}, x_{[i+2, k]}) = \lambda\}$, which is again non-empty (it contains $x_{i+1}$) coset of codimension at most $r$. Taking $Y_{i+1} = \cup_{y_{[i]} \in Y_i} y_{[i]} \times Z(y_{[i]}) \times \{x_{[i+2, k]}\}$ finishes the proof.\end{proof}

When $A\colon G_{[k]} \to H$ is a map, we write $\{A = 0\} = \{x_{[k]} \in G_{[k]} \colon A(x_{[k]}) = 0\}$, when there is no danger of confusion. Also, if $A_1, \dots, A_r \colon G_{[k]} \to H$ are maps, and $\lambda \in \mathbb{F}^r$, we write $\lambda \cdot A$ for the map $\sum_{i \in [r]} \lambda_i A_i$, when there is no danger of confusion.

\begin{lemma}[Approximating dense varieties externally]\label{BohrApprox} Let $A \colon G_{[k]} \to H$ be a multiaffine map. Then, there is a multiaffine map $\phi \colon G_{[k]} \to \mathbb{F}^s$ such that $\{A = 0\} \subset \{\phi = 0\}$ and $|\{\phi = 0\} \setminus \{A = 0\}| \leq |\mathbb{F}|^{-s}|G_{[k]}|$. If, additionally, $A$ is linear in coordinate $c$, then so is $\phi$.\end{lemma}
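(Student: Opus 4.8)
The plan is a single probabilistic step in which the components of $\phi$ are random scalar functionals applied to $A$. Since the $G_i$ are finite, the image of $A$ is a finite subset of $H$, so we may first replace $H$ by the span of this image and assume $H$ is finite-dimensional; in particular its dual space $H^*$ is a finite set. Fix the target integer $s$ and let $\ell_1, \dots, \ell_s \in H^*$ be independent and uniformly random. Set $\phi = (\ell_1 \circ A, \dots, \ell_s \circ A) \colon G_{[k]} \to \mathbb{F}^s$. A composition of the multiaffine map $A$ with a linear functional is again affine in each coordinate, so $\phi$ is multiaffine; and if $A$ happens to be linear rather than merely affine in coordinate $c$, then each $\ell_j \circ A$ is linear in coordinate $c$ as well, which settles the last sentence of the lemma. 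The inclusion $\{A = 0\} \subset \{\phi = 0\}$ is automatic, since $A(x_{[k]}) = 0$ forces $\ell_j\big(A(x_{[k]})\big) = 0$ for every $j$.

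The only real computation is to estimate $\mathbb{E}\,\big|\{\phi = 0\} \setminus \{A = 0\}\big|$. Fix a point $x_{[k]}$ with $h := A(x_{[k]}) \neq 0$. For a single uniformly random $\ell \in H^*$, the evaluation $\ell \mapsto \ell(h)$ is a linear map $H^* \to \mathbb{F}$ whose image is all of $\mathbb{F}$ precisely because $h \neq 0$; hence $\ell(h)$ is uniform on $\mathbb{F}$ and $\mathbb{P}\big[\ell(h) = 0\big] = |\mathbb{F}|^{-1}$. By independence, $\mathbb{P}\big[x_{[k]} \in \{\phi = 0\}\big] = \mathbb{P}\big[\ell_j(h) = 0 \text{ for all } j \in [s]\big] = |\mathbb{F}|^{-s}$. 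Summing over the points $x_{[k]}$ with $A(x_{[k]}) \neq 0$ gives $\mathbb{E}\,\big|\{\phi = 0\} \setminus \{A = 0\}\big| \leq |\mathbb{F}|^{-s}|G_{[k]}|$, so some realization of $(\ell_1, \dots, \ell_s)$ achieves $\big|\{\phi = 0\} \setminus \{A = 0\}\big| \leq |\mathbb{F}|^{-s}|G_{[k]}|$, which is exactly what is required.

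I do not expect a genuine obstacle here. The two points that need a line of care are the stability of multiaffinity (and of linearity in coordinate $c$) under composition with a linear functional, and the fact that a uniformly random functional takes a uniform value on a fixed non-zero vector --- the surjectivity remark above. If one prefers the ``algebraic dependent random choice'' packaging advertised in the overview, one may instead take $\phi$ to be a tuple of $s$ independent uniform elements of the subspace $W = \spn\{\ell \circ A \colon \ell \in H^*\}$ of multiaffine forms attached to $A$ (so that $\{A = 0\} = \bigcap_{w \in W}\{w = 0\}$); since $\ell \mapsto \ell \circ A$ pushes the uniform measure on $H^*$ forward to the uniform measure on $W$, this is literally the same construction and gives the identical bound.
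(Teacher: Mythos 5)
Your proof is correct and is essentially identical to the paper's: the paper draws $h_1, \dots, h_s$ uniformly from $H$ and sets $\phi_i = A(\cdot)\cdot h_i$ using the fixed dot product, which is the same construction as your random $\ell_j \in H^*$ up to the canonical identification $H \cong H^*$, and the expectation bound is computed the same way. Your preliminary reduction to a finite-dimensional $H$ is a harmless tidying-up that the paper leaves implicit.
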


\begin{proof}Take $h_1, \dots, h_s$ uniformly and independently from $H$, and set $\phi(x_{[k]})_i = A(x_{[k]}) \cdot h_i$, $i \in [s]$. If $A$ is linear in coordinate $c$, then so is $\phi$, as required. Notice that $\{A = 0\} \subset \{\phi = 0\}$ holds immediately. On the other hand, if $x_{[k]} \in G_{[k]}$ satisfies $A(x_{[k]}) \not= 0$, then
\[\mathbb{P}(\phi(x_{[k]}) = 0) = \mathbb{P}\Big((\forall i \in [s]) A(x_{[k]}) \cdot h_i = 0\Big) = |\mathbb{F}|^{-s}.\]
Thus, $\ex |\{\phi = 0\} \setminus \{A = 0\}| \leq |\mathbb{F}|^{-s}|\{A \not= 0\}|$, and the claim follows.\end{proof}

\begin{lemma}[Approximating dense varieties simultaneously.]\label{BohrApproxSim}Let $A_1, \dots, A_r \colon G_{[k]} \to H$ be multiaffine maps. Let $\epsilon > 0$. Then, there is $s \leq r + \log_{|\mathbb{F}|} \epsilon^{-1}$, multiaffine maps $\phi_1, \dots, \phi_r \colon G_{[k]} \to \mathbb{F}^s$ such that for each $\lambda \in \mathbb{F}^r$ we have $\{\lambda \cdot A = 0\} \subset \{\lambda \cdot \phi = 0\}$ and $|\{\lambda \cdot \phi = 0\} \setminus \{\lambda \cdot A = 0\}| \leq \epsilon |G_{[k]}|$.\end{lemma}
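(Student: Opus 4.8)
The plan is to reduce the simultaneous statement to a single application of random hyperplane sampling, exactly as in Lemma~\ref{BohrApprox}, but done uniformly over all $\lambda$ at once. First I would take $h_1, \dots, h_s$ uniformly and independently from $H$ and, for each $j \in [r]$, define $\phi_j \colon G_{[k]} \to \mathbb{F}^s$ by $\phi_j(x_{[k]})_i = A_j(x_{[k]}) \cdot h_i$ for $i \in [s]$. These are multiaffine because each $A_j$ is. For a fixed $\lambda \in \mathbb{F}^r$ we then have $(\lambda \cdot \phi)(x_{[k]})_i = (\lambda \cdot A)(x_{[k]}) \cdot h_i$, so $\lambda \cdot \phi$ is precisely the map one would produce by applying the construction of Lemma~\ref{BohrApprox} to the single multiaffine map $\lambda \cdot A$. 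In particular the containment $\{\lambda \cdot A = 0\} \subset \{\lambda \cdot \phi = 0\}$ holds automatically for every $\lambda$, with no randomness needed.

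It remains to control the error sets $\{\lambda \cdot \phi = 0\} \setminus \{\lambda \cdot A = 0\}$ simultaneously. For a fixed $\lambda$ and a fixed point $x_{[k]}$ with $(\lambda \cdot A)(x_{[k]}) \neq 0$, the events $(\lambda \cdot A)(x_{[k]}) \cdot h_i = 0$ are independent over $i$, each of probability $|\mathbb{F}|^{-1}$, so $\mathbb{P}\big(x_{[k]} \in \{\lambda \cdot \phi = 0\} \setminus \{\lambda \cdot A = 0\}\big) = |\mathbb{F}|^{-s}$. Hence for each fixed $\lambda$, $\ex \big|\{\lambda \cdot \phi = 0\} \setminus \{\lambda \cdot A = 0\}\big| \leq |\mathbb{F}|^{-s} |G_{[k]}|$, and by Markov's inequality the probability that this particular error set exceeds $\epsilon |G_{[k]}|$ is at most $|\mathbb{F}|^{-s} \epsilon^{-1}$. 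Taking a union bound over all $|\mathbb{F}|^r$ choices of $\lambda$, the probability that some $\lambda$ fails is at most $|\mathbb{F}|^{r-s} \epsilon^{-1}$, which is strictly less than $1$ once $s > r + \log_{|\mathbb{F}|} \epsilon^{-1}$, i.e.\ for some $s \leq r + \log_{|\mathbb{F}|}\epsilon^{-1}$ (adjusting by at most one integer; with the paper's convention $\log \epsilon^{-1} = \log_{|\mathbb{F}|}\epsilon^{-1} + 1 \geq 1$ this slack is available). Therefore there exists a choice of $h_1, \dots, h_s$ for which all $r$ conditions hold at once, and fixing such a choice yields the desired $\phi_1, \dots, \phi_r$.

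The only mild subtlety — and the closest thing to an obstacle — is bookkeeping the value of $s$: the union bound over $|\mathbb{F}|^r$ values of $\lambda$ costs an additive $r$ in the exponent, which is exactly what the statement allows, so one just has to check that the rounding of $s$ to an integer stays within the claimed bound $s \leq r + \log_{|\mathbb{F}|}\epsilon^{-1}$; this is where the $+1$ built into the paper's modified $\log$ is convenient. No linearity-in-a-coordinate claim is made here, but if desired one notes, as in Lemma~\ref{BohrApprox}, that $\phi_j$ inherits linearity in any coordinate in which $A_j$ is linear.
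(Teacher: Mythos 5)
Your argument is correct, and it takes a genuinely different route from the paper's. You sample $h_1, \dots, h_s$ once, define all $\phi_j$ simultaneously from the same $h_i$'s, and then run Markov plus a union bound over all $\lambda \in \mathbb{F}^r$ to show a single choice of $h$'s works for every $\lambda$. The paper instead lifts to the product space $G_{[k]} \times \mathbb{F}^r$: it forms $\Phi(x_{[k]},\lambda) = \sum_i \lambda_i A_i(x_{[k]})$, applies Lemma~\ref{BohrApprox} once to $\Phi$ (obtaining $\psi$ that is linear in the auxiliary $\lambda$-coordinate), and then sets $\phi_i(x_{[k]}) = \psi(x_{[k]}, e_i)$. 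The paper's simultaneous error bound is then deterministic: the slice $\{\psi(\cdot,\lambda)=0\}\setminus\{\Phi(\cdot,\lambda)=0\}$ at each fixed $\lambda$ is contained in the full error set $\{\psi=0\}\setminus\{\Phi=0\}$ on $G_{[k]}\times\mathbb{F}^r$, whose size is already controlled. In effect the auxiliary-variable trick packages your union bound deterministically, avoiding a second layer of probabilistic reasoning; the underlying random construction ($\phi_j(x)_i = A_j(x)\cdot h_i$) is the same, and so is the $r + \log_{|\mathbb{F}|}\epsilon^{-1}$ cost. One small correction: the lemma's bound on $s$ uses the literal $\log_{|\mathbb{F}|}$, not the paper's shifted $\log$, so there is no built-in ``$+1$'' slack there; your $s$ and the paper's may each overshoot by at most one integer due to rounding, and both proofs quietly tolerate this, but you should not attribute the slack to the modified-log convention.
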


\begin{proof}Consider an auxiliary multiaffine map $\Phi \colon G_{[k]} \times \mathbb{F}^r \to H$, defined by
\[\Phi(x_{[k]}, \lambda) = \sum_{i \in [r]} \lambda_i A_i(x_{[k]}).\]
Apply Lemma~\ref{BohrApprox} to find $s \leq \log_{|\mathbb{F}|}( |\mathbb{F}|^r \epsilon^{-1}) = r + \log_{|\mathbb{F}|}\epsilon^{-1}$ and a multiaffine map $\psi \colon G_{[k]} \times \mathbb{F}^r \to \mathbb{F}^s$ such that $\{\psi = 0\} \supset \{\Phi = 0\}$ and the difference set has density at most $|\mathbb{F}|^{-r} \epsilon$ in $G_{[k]} \times \mathbb{F}^r$. Furthermore, since $\Phi$ is linear in the last (auxiliary) coordinate, so is $\psi$. If $e_1, \dots, e_r$ is the standard basis of $\mathbb{F}^r$, let $\phi_i$ be defined by $\phi_i(x_{[k]}) = \psi(x_{[k]}, e_i)$. Hence, for each $\lambda \in \mathbb{F}^r$, we have that
\[\begin{split}\Big\{x_{[k]} \in G_{[k]} \colon \sum_{i \in [r]} \lambda_i \phi_i(x_{[k]}) = 0\Big\} &= \Big\{x_{[k]} \in G_{[k]} \colon \sum_{i \in [r]} \lambda_i \psi(x_{[k]}, e_i) = 0\Big\} = \{x_{[k]} \in G_{[k]} \colon \psi(x_{[k]}, \lambda) = 0\}\\
&\supset \{x_{[k]} \in G_{[k]} \colon \Phi(x_{[k]}, \lambda) = 0\}  = \Big\{x_{[k]} \in G_{[k]} \colon \sum_{i \in [r]} \lambda_i A_i(x_{[k]}) = 0\Big\}\end{split}\]
and
\[\begin{split}&\Big|\Big\{x_{[k]} \in G_{[k]} \colon \sum_{i \in [r]} \lambda_i \phi_i(x_{[k]}) = 0\Big\} \setminus \Big\{x_{[k]} \in G_{[k]} \colon \sum_{i \in [r]} \lambda_i A_i(x_{[k]}) = 0\Big\}\Big|\\
\leq &\Big|\Big\{(x_{[k]},\mu) \in G_{[k]}\times \mathbb{F}^r \colon\psi(x_{[k]}, \mu) = 0\Big\} \setminus \Big\{(x_{[k]},\mu) \in G_{[k]}\times \mathbb{F}^r \colon \Phi(x_{[k]}, \mu) = 0\Big\}\Big|\\
\leq &|\mathbb{F}|^{-r} \epsilon |G_{[k]} \times \mathbb{F}^r| = \epsilon |G_{[k]}|,\end{split}\]
as desired.\end{proof}

When $A\colon G_{[k]} \to H$ is a multiaffine map, we may write $A(x_{[k]}) = \sum_{I \subset [k]} A_I(x_I)$, for multilinear maps $A_I \colon G_I \to H$ (for $I = \emptyset$, $A_I$ is a constant, but not necessarily zero). We call $A_I$ the \emph{multilinear parts} of $A$. We make use of the following observation of Lovett~\cite{Lov}.

\begin{lemma}[Lovett~\cite{Lov}]\label{biasHomog}Let $\alpha \colon G_{[k]} \to \mathbb{F}$ be a multiaffine form, with multilinear parts $\alpha_I$. Then
\[\Big|\ex_{x_{[k]}} \chi(\alpha(x_{[k]}))\Big| \leq \ex_{x_{[k]}} \chi(\alpha_{[k]}(x_{[k]})) \in \mathbb{R}_{\geq 0}.\]\end{lemma}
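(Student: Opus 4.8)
The plan is to run a telescoping chain of inequalities from $\ex_{x_{[k]}}\chi(\alpha(x_{[k]}))$ up to $\ex_{x_{[k]}}\chi(\alpha_{[k]}(x_{[k]}))$. For $0\le j\le k$ introduce the partially homogenized form $\alpha^{(j)}=\sum_{I:\,[j]\subseteq I}\alpha_I$; thus $\alpha^{(0)}=\alpha$, $\alpha^{(k)}=\alpha_{[k]}$, each $\alpha^{(j)}$ is multiaffine, and --- the feature that drives the argument --- $\alpha^{(j)}$ is \emph{homogeneous linear} in each of the coordinates $1,\dots,j$, since every index set occurring in it contains $[j]$. Set $b_j=\ex_{x_{[k]}}\chi(\alpha^{(j)}(x_{[k]}))$. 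I will show that $|b_j|\le b_{j+1}$ and $b_{j+1}\in\mathbb{R}_{\ge 0}$ for every $0\le j<k$; since then $b_j=|b_j|$ for $j\ge 1$, composing these gives $|b_0|\le b_1\le\dots\le b_k$, which is precisely the claimed inequality (and the membership $b_k\in\mathbb{R}_{\ge 0}$ is the final assertion of the lemma).

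For the inductive step, fix $0\le j<k$ and split $\alpha^{(j)}=\alpha^{(j+1)}+\delta$ by separating the index sets that contain $j+1$ from those that do not, so $\delta=\sum_{I:\,[j]\subseteq I,\ j+1\notin I}\alpha_I$ does not depend on the coordinate $x_{j+1}$, while $\alpha^{(j+1)}$ is linear in $x_{j+1}$ and hence can be written as $\alpha^{(j+1)}(x_{[k]})=x_{j+1}\cdot U(x_{[k]\setminus\{j+1\}})$ for a suitable multiaffine map $U\colon G_{[k]\setminus\{j+1\}}\to G_{j+1}$ (using the fixed dot product on $G_{j+1}$). Averaging over $x_{j+1}$ first and using $\ex_{x_{j+1}\in G_{j+1}}\chi(x_{j+1}\cdot u)=\mathbbm{1}[u=0]$, valid because $\chi$ is non-trivial, yields $b_j=\ex_{x_{[k]\setminus\{j+1\}}}\chi(\delta(x_{[k]\setminus\{j+1\}}))\,\mathbbm{1}[U(x_{[k]\setminus\{j+1\}})=0]$ and, in the same way, $b_{j+1}=\ex_{x_{[k]\setminus\{j+1\}}}\mathbbm{1}[U(x_{[k]\setminus\{j+1\}})=0]=\Pr_{x_{[k]\setminus\{j+1\}}}[U=0]$. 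In particular $b_{j+1}\in[0,1]$, and $|b_j|\le\Pr[U=0]=b_{j+1}$, completing the step.

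The reason this works where the naive route stalls: iterating Cauchy--Schwarz in the style of the sketch of Theorem~\ref{biasedPoly} --- equivalently, repeatedly using $\bigconv{i}(\chi\circ\alpha)=\chi(\sum_{I\ni i}\alpha_I)$ together with $|\ex g|^2\le\ex\bigconv{i}g$ --- only delivers $|\ex_{x_{[k]}}\chi(\alpha(x_{[k]}))|^{2^k}\le\ex_{x_{[k]}}\chi(\alpha_{[k]}(x_{[k]}))$, which is strictly weaker than what we want. The improvement comes from peeling off coordinates one at a time using \emph{linearity} rather than Cauchy--Schwarz: linearity in the new coordinate collapses the inner average to an honest indicator, with no squaring, so the telescope advances with first powers. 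The one thing to keep track of is that every $b_j$ with $j\ge 1$ is automatically a non-negative real --- it equals $\Pr[U=0]$ for the appropriate $U$ --- which is exactly what lets the bounds $|b_j|\le b_{j+1}$ compose into $|b_0|\le b_k$, and which simultaneously supplies the side claim that $\ex_{x_{[k]}}\chi(\alpha_{[k]}(x_{[k]}))$ is a non-negative real.
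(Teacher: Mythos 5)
Your proof is correct and follows the same route as the paper: you split off one coordinate at a time, use linearity in that coordinate to collapse the inner average to an indicator (rather than applying Cauchy--Schwarz), apply the triangle inequality, and observe the resulting quantity is a probability, hence nonnegative and equal to its own modulus, so the inequalities compose. The only cosmetic difference is the direction of the induction (you homogenize coordinates $1,2,\dots,k$, the paper works with $k, k-1, \dots, 1$); the underlying argument is identical.
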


\begin{proof}[Sketch proof] Write $\alpha(x_{[k]}) = \alpha'(x_{[k-1]}) + A(x_{[k-1]}) \cdot x_k$, for multiaffine maps $\alpha'$ and $A$. Then
\[\begin{split}\Big|\ex_{x_{[k]}} \chi(\alpha(x_{[k]}))\Big| = &\Big|\ex_{x_{[k-1]}} \chi(\alpha'(x_{[k-1]})) \ex_{x_k} \chi\Big(A(x_{[k-1]}) \cdot x_k\Big)\Big| = \Big|\ex_{x_{[k-1]}} \chi(\alpha'(x_{[k-1]})) \bm{1}(A(x_{[k-1]}) = 0)\Big|\\
\leq &\ex_{x_{[k-1]}} \bm{1}(A(x_{[k-1]}) = 0) = \ex_{x_{[k]}} \chi\Big(\sum_{k \in I \subset [k]} \alpha_I(x_I)\Big).\end{split}\]
Apply this observation $k-1$ more times to end up with $\alpha_{[k]}$ only in the final bound.\end{proof}

Recall the definition of Gowers box norm (Definition B.1 in the Appendix B of~\cite{GreenTaoPrimes})
\[\|f\|_{\square^{k}}^{2^k} = \ex_{x_{[k]}, y_{[k]} \in G_{[k]}} \prod_{I \subset [k]} \operatorname{Conj}^{|I|} f(x_I, y_{[k] \setminus I})\]
for a map $f \colon G_{[k]} \to \mathbb{C}$, where $\operatorname{Conj}$ stands for the conjugation operator. This definition is a generalization of Gowers uniformity norms, which were introduced by Gowers in~\cite{GowSze} in additive combinatorics and by Host and Kra in~\cite{HostKra} in the context of ergodic theory. For a more detailed discussion of box norms, see~\cite{GreenTaoPrimes}. Note that when $\phi \colon G_{[k]} \to \mathbb{F}$ is a multilinear form, we have $\|\chi \circ \phi\|_{\square^{k}}^{2^k} = \ex_{x_{[k]} \in G_{[k]}} \chi(\phi(x_{[k]}))$. The following is the Gowers-Cauchy-Schwarz inequality for the box norm.

\begin{proposition}\label{unifBound}Let $f_I \colon G_{[k]} \to \mathbb{C}$ be a function for each $I \subset [k]$. Then
\[\Big|\ex_{x_{[k]}, y_{[k]} \in G_{[k]}} \prod_{I \subset [k]} \operatorname{Conj}^{|I|} f_I(x_I, y_{[k] \setminus I})\Big| \leq \prod_{I \subset [k]} \|f_I\|_{\square^k}.\]
\end{proposition}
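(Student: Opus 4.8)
The plan is to prove this by a $k$-fold iteration of the ordinary Cauchy--Schwarz inequality, peeling off one coordinate at a time. I would proceed by induction on $k$. For $k = 0$ the statement is trivial (both sides equal $|f_\emptyset|$ at the single relevant value, or one interprets the empty product appropriately). For the inductive step, I would isolate the dependence on the pair $(x_k, y_k)$: every subset $I \subset [k]$ either contains $k$ or not, so the product over $I \subset [k]$ factors, for each fixed value of all the other variables, into a product of two blocks — one indexed by $I \subset [k-1]$ using the value $x_k$ in the $k$th slot, and one indexed by $I \cup \{k\}$ with $I \subset [k-1]$, using $y_k$ in the $k$th slot, with an extra conjugation. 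Writing $G_{[k-1]}^{\text{big}}$ for the product of all the "doubled" variables $x_{[k-1]}, y_{[k-1]}$, the left-hand side becomes
\[
\Big|\ex_{x_k, y_k}\ex_{x_{[k-1]}, y_{[k-1]}} \Big(\prod_{I \subset [k-1]} \operatorname{Conj}^{|I|} f_I(x_I, y_{[k-1]\setminus I}, x_k)\Big)\Big(\prod_{I \subset [k-1]} \operatorname{Conj}^{|I|+1} f_{I \cup \{k\}}(x_I, y_{[k-1]\setminus I}, y_k)\Big).
\]
Now I would apply Cauchy--Schwarz in the variable $x_k$ (after noting the $y_k$ average can be absorbed and the two blocks depend on $x_k$ and $y_k$ separately, so that the whole expression is $\ex_{x_{[k-1]},y_{[k-1]}} F(x_{[k-1]},y_{[k-1]}) \overline{G(x_{[k-1]},y_{[k-1]})}$ after integrating out $x_k$ and $y_k$); more precisely, the standard move is: the modulus squared of an average over $x_k$ of a product is bounded, via Cauchy--Schwarz duplicating the $x_k$-variable into $x_k, x_k'$, by the product of two averages in which the roles of $f_I$ and $f_{I\cup\{k\}}$ are each made "symmetric." Concretely, squaring and applying Cauchy--Schwarz in $(x_{[k-1]}, y_{[k-1]})$ gives that the left-hand side squared is at most the product of two expressions, each of which has exactly the same form as the original $k$-dimensional expression but with one of the two families replaced by a symmetrized copy of the other: in the first factor every $f_{I \cup \{k\}}$ is replaced by (a reflected copy of) $f_I$, and in the second every $f_I$ is replaced by a reflected copy of $f_{I \cup \{k\}}$.

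After this single Cauchy--Schwarz step, each of the two resulting factors is an expression of exactly the shape to which the inductive hypothesis (for $k-1$, with $2^{k-1}$ functions on $G_{[k-1]}$, the $x_k$ or $y_k$ slot now being a fixed dummy we integrate over last — actually one checks it collapses to the $(k-1)$-dimensional box configuration) applies, bounding it by a product of $\square^{k-1}$-norms; but since in the first factor all functions are copies of the $f_I$ with $I \not\ni k$ and in the second all are copies of the $f_{I \cup\{k\}}$, unwinding the definitions shows the first factor is bounded by $\prod_{I \subset [k-1]} \|f_I\|_{\square^k}^{?}$ — here I would instead run the cleaner bookkeeping where one simply performs the Cauchy--Schwarz $k$ times in succession over the $k$ coordinates directly, obtaining after all $k$ steps the bound $\prod_{I \subset [k]} \|f_I\|_{\square^k}$, with each norm appearing to the first power because each of the $2^k$ functions is "selected" by exactly one branch of the $2^k$-leaf binary tree of Cauchy--Schwarz choices. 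The verification that the exponents work out to exactly $1$ for each $f_I$ — i.e.\ that the $2^k$ root-to-leaf paths in the Cauchy--Schwarz tree biject with the $2^k$ subsets $I$ — is the one piece of bookkeeping to do carefully; it is standard (this is precisely how the Gowers--Cauchy--Schwarz inequality is proved for uniformity norms) but is the main place an off-by-one or a mismatched conjugation could creep in.

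The main obstacle, then, is purely notational: organizing the $k$-fold Cauchy--Schwarz so that (a) the conjugation pattern $\operatorname{Conj}^{|I|}$ is preserved at each step, (b) one tracks which function ends up in which leaf, and (c) one sees that the final bound has each $\|f_I\|_{\square^k}$ to the first power rather than, say, a geometric-mean expression. I would handle this by introducing, at stage $j$ of the induction, a family of functions indexed by $I \subset [j]$ together with a choice of "sign pattern" on $[k] \setminus [j]$ recording whether each already-processed coordinate was duplicated to the $x$- or the $x'$-copy, and show the invariant that the current expression is $\le$ the corresponding product of box norms raised to the reciprocal power $2^{-(k-j)}$, matching the count of sign patterns. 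At $j = 0$ this invariant is exactly the claimed inequality. I do not expect any analytic subtlety — only the combinatorial indexing needs care.
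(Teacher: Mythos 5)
Your proposal is correct, and it is essentially the same argument the paper invokes: the paper does not write a proof but refers to the standard iterated Cauchy--Schwarz argument in Green--Tao (``Linear equations in primes,'' Appendix B), which is exactly your $k$-step peeling, with the bookkeeping observation that the $2^k$ leaves of the Cauchy--Schwarz binary tree biject with the subsets $I \subset [k]$, giving each $\|f_I\|_{\square^k}$ to the first power after taking $2^k$-th roots. One small remark: the version you sketch first (squaring once and then trying to invoke an induction hypothesis at level $k-1$) does not literally close, because after one Cauchy--Schwarz the two resulting averages are still $k$-dimensional configurations, not $(k-1)$-dimensional ones; you correctly switch to the cleaner iteration which applies Cauchy--Schwarz $k$ times at level $k$ and only at the very end recognizes $\Lambda(f,\dots,f) = \|f\|_{\square^k}^{2^k}$. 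That iterative formulation is the right one to commit to.
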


This can be proved by induction on $k$, Cauchy-Schwarz and H\"{o}lder inequalities, for details, see~\cite{GreenTaoPrimes}. Note that Proposition~\ref{unifBound} also implies a bound resembling that in Lemma~\ref{biasHomog}, but such a bound would be weaker than that in Lemma~\ref{biasHomog}. In fact, in the rest of the paper we only need Lemma~\ref{biasHomog}, but we include the definition of Gowers box norm and Proposition~\ref{unifBound}, since we apply it in the introduction in the sketch proof of Theorem~\ref{biasedPoly}.

\boldSection{$\bm{Weak}(k) \implies \bm{Strong}(k)$}
\noindent\emph{Proof.} We begin the deduction of the strong inverse theorem from the weak one by proving the `one-sided regularity lemma'.
\begin{proposition}\label{nonzeroConn} Let $\rho \colon G_{[k]} \to \mathbb{F}, \gamma_i \colon G_{I_i} \to \mathbb{F}$, $i \in [r]$ be multilinear maps. Let $F = \{i \in [r] \colon I_i = [k]\}$. Suppose that
\[\ex_{x_1, \dots, x_k} \chi\Big(\rho(x_{[k]}) - \sum\limits_{i \in F} \lambda_i \gamma_i(x_{[k]})\Big) \leq \eta = 2^{-2k} |\mathbb{F}|^{-(k+1)(3r+2)},\]
for any choice of $\lambda \in \mathbb{F}^F$. Then, the set $\{x_{[k]} \in G_{[k]} \colon (\forall i \in [r])\gamma_i(x_{I_i}) = 0, \rho(x_{[k]}) \not= 0\}$ is connected and of diameter at most $(2k+1)(2^k-1)$.
\end{proposition}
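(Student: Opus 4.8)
The plan is to argue by induction on $k$. Write $W=\{x_{[k]}\in G_{[k]}\colon (\forall i\in[r])\,\gamma_i(x_{I_i})=0,\ \rho(x_{[k]})\neq 0\}$ for the set in question and put $d_k=(2k+1)(2^k-1)$. The base case $k=1$ is linear algebra: all of $\rho,\gamma_1,\dots,\gamma_r$ are linear forms on $G_1$ and $F=[r]$; taking $\lambda=0$ in the hypothesis forces $\rho\neq 0$, and if $\rho\in\spn\{\gamma_1,\dots,\gamma_r\}$ then some combination $\rho-\sum_{i\in F}\lambda_i\gamma_i$ would vanish identically, contradicting $\eta<1$, so $\rho$ does not vanish on the subspace $\bigcap_i\ker\gamma_i$ and $W\neq\emptyset$; since distinct points of $G_1$ are adjacent in $\mathcal G$, $W$ is a clique of diameter $\le 1\le d_1$.

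For the inductive step I would single out coordinate $k$, writing $\rho(x_{[k]})=R(x_{[k-1]})\cdot x_k$ and $\gamma_i(x_{[k]})=\Gamma_i(x_{[k-1]})\cdot x_k$ for $i\in F$ with $R,\Gamma_i\colon G_{[k-1]}\to G_k$ multilinear, and for $a\in G_k$ consider the \emph{slice problem} on $G_{[k-1]}$ given by the form $\rho_a(z)=R(z)\cdot a$ together with the forms obtained from the $\gamma_i$ by setting $x_k=a$; here $W^{(a)}=\{z\in G_{[k-1]}\colon (z,a)\in W\}$ is exactly the set attached to the slice problem. The first goal is to show that the good set $Y$ of those $a$ for which the slice problem satisfies the hypothesis of the proposition at level $k-1$ has complement of density at most $\tfrac14|\mathbb{F}|^{-2r-2}$ in $G_k$. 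This is a pigeonhole over the at most $|\mathbb{F}|^r$ combinations $\sigma$ of the full forms of the slice problem (these are $\Gamma_i(\cdot)\cdot a$ for $i\in F$ and the $\gamma_i$ with $I_i=[k-1]$): if the (nonnegative real) quantity $\ex_z\chi(\rho_a(z)-\sigma(z))$ exceeds $\eta'$ for $a$ in a set of density $\delta$, then averaging over $a$ gives $\delta\eta'\le\ex_a\ex_z\chi(\rho_a(z)-\sigma(z))=\ex_z\chi(-g(z))\,\bm{1}[M(z)=0]$ for a suitable multilinear $g$ and a multilinear $M=R-\sum_{i\in F}\mu_i\Gamma_i$; the right-hand side has modulus at most $\ex_z\bm{1}[M(z)=0]=\ex_{x_{[k]}}\chi\big(\rho(x_{[k]})-\sum_{i\in F}\mu_i\gamma_i(x_{[k]})\big)\le\eta$ by hypothesis, so $\delta\le\eta/\eta'=\tfrac14|\mathbb{F}|^{-(3r+2)}$ (with $\eta'$ the threshold the proposition prescribes at level $k-1$), and a union bound over the combinations $\sigma$ finishes it. For $a\in Y$ the inductive hypothesis makes $W^{(a)}$ connected of diameter $\le d_{k-1}$, and a short Fourier computation (the one-slice case of the one below) shows $W^{(a)}$ is dense, hence non-empty.

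Granting two more facts, the proposition follows. (i) Every $p\in W$ is joined by one edge of $\mathcal G[W]$ to a point whose last coordinate lies in $Y$: the set $C(p_{[k-1]}):=\{c\in G_k\colon (p_{[k-1]},c)\in W\}$ is a subspace of codimension $\le r$ (the linear conditions $\gamma_i(p_{I_i\setminus k},c)=0$ for $i$ with $k\in I_i$, which $c=p_k$ satisfies) with the hyperplane $\{R(p_{[k-1]})\cdot c=0\}$ deleted, and that hyperplane does not contain the subspace since $p_k$ lies in the subspace while $\rho(p)\neq0$; hence $|C(p_{[k-1]})|\ge\tfrac12|\mathbb{F}|^{-r}|G_k|>|G_k\setminus Y|$, so $C(p_{[k-1]})\cap Y\neq\emptyset$. (ii) For all $a,b\in Y$ one has $W^{(a)}\cap W^{(b)}\neq\emptyset$. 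Given these, for $p,q\in W$ I move each one step (by (i)) to $p'=(p'_{[k-1]},a)$ and $q'=(q'_{[k-1]},b)$ with $a,b\in Y$, pick $z\in W^{(a)}\cap W^{(b)}$ (by (ii)), connect $p'_{[k-1]}$ to $z$ inside $W^{(a)}$ and $q'_{[k-1]}$ to $z$ inside $W^{(b)}$ by the inductive hypothesis, lift those paths, and splice in the edge from $(z,a)$ to $(z,b)$; the resulting walk lies in $W$ and has length $\le 1+d_{k-1}+1+d_{k-1}+1=2d_{k-1}+3\le d_k$, since $d_k-(2d_{k-1}+3)=2^{k+1}+2k-6\ge0$ for $k\ge2$.

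The real work, which I expect to be the main obstacle, is fact (ii): it asserts that the variety $V'=\{z\in G_{[k-1]}\colon \gamma_i((z,a)_{I_i})=\gamma_i((z,b)_{I_i})=0\ \text{for all }i\}$ — of codimension $\le 2r$ and containing $0$, so $|V'|\ge|\mathbb{F}|^{-2r(k-1)}|G_{[k-1]}|$ by Lemma~\ref{basicDensityL} — is not covered by $\{\rho(\cdot,a)=0\}\cup\{\rho(\cdot,b)=0\}$. I would estimate $|V'\cap\{\rho(\cdot,a)\neq0\}\cap\{\rho(\cdot,b)\neq0\}|$ by expanding the indicator of $V'$ and of the two non-vanishing conditions into additive characters: the main term equals $(1-|\mathbb{F}|^{-1})^2|V'|/|G_{[k-1]}|\ge\tfrac14|\mathbb{F}|^{-2r(k-1)}$, while every other term is a character sum $\ex_z\chi(h(z))$ of a multilinear form $h$ on $G_{[k-1]}$ whose fully multilinear part, once un-sliced, carries a summand of the shape $(tR+\sum_{i\in F}\mu_i\Gamma_i)(x_{[k-1]})\cdot x_k$ with $t\neq0$; exploiting the scaling invariance $\ex_x\chi(tf)=\ex_x\chi(f)$ of biases of multilinear forms together with Lemma~\ref{biasHomog} (and Proposition~\ref{unifBound} where needed) to strip the remaining coordinates, each such term is bounded by $\ex_{x_{[k]}}\chi\big(\rho(x_{[k]})-\sum_{i\in F}\lambda_i\gamma_i(x_{[k]})\big)\le\eta$ for a suitable $\lambda$, which is far below $\tfrac14|\mathbb{F}|^{-2r(k-1)}$; hence the count is positive. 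The delicate points will be (a) cleanly absorbing the non-full forms $\gamma_i$ (those with $I_i\neq[k]$) and the cross terms between the slices $a$ and $b$ into harmless unimodular factors when invoking Lemma~\ref{biasHomog} — this may require enlarging $Y$ by a further pigeonhole or an extra Cauchy-Schwarz — so that only the controlled bias $\ex_{x_{[k]}}\chi(\rho-\sum_{i\in F}\lambda_i\gamma_i)$ survives, and (b) the bookkeeping, namely that the $|\mathbb{F}|^{O(r)}$ character terms and the $|\mathbb{F}|^{-O(rk)}$ loss from Lemma~\ref{basicDensityL} are beaten by the smallness of $\eta=2^{-2k}|\mathbb{F}|^{-(k+1)(3r+2)}$, which is precisely what that choice is engineered to guarantee.
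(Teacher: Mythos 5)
Your inductive skeleton (slice along $G_k$, pigeonhole a good set $Y$ of slice indices, splice paths through two slices, base case $k=1$) and fact (i) are sound and broadly parallel to the paper's. But the step you flag as the main obstacle, fact (ii), is a genuine gap, and the Fourier sketch you give cannot close it. Expanding $|V'\cap\{\rho(\cdot,a)\neq 0\}\cap\{\rho(\cdot,b)\neq 0\}|$ produces character sums $\ex_z\chi(\phi(z))$ over $z\in G_{[k-1]}$ whose top multilinear parts, after Lemma~\ref{biasHomog}, take the form
\[
uR(z)\cdot a+vR(z)\cdot b+\sum_{i\in F}\big(t_i\Gamma_i(z)\cdot a+s_i\Gamma_i(z)\cdot b\big)+\sum_{i\colon I_i=[k-1]}w_i\gamma_i(z).
\]
When both $u,v\neq 0$ this is neither the slice at $a$ nor at $b$: the $\rho$ term lives at $ua+vb$ while each $\Gamma_i$-term lives at its own mixture $t_ia+s_ib$, and these mixtures vary with $i$. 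The rescaling identity $\ex\chi(tf)=\ex\chi(f)$ cannot normalise them all at once, so this form is not among those whose bias is controlled by the hypothesis or by membership $a,b\in Y$. The claimed bound $\leq\eta$ for the error terms therefore does not follow; the extra Cauchy--Schwarz or pigeonhole you allude to would have to address this mixing, and I do not see how it would.

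The paper sidesteps exactly this by transposing the roles of $G_k$ and $G_{[k-1]}$ in the common-point step: it fixes two base points $x_{[k-1]},y_{[k-1]}\in G_{[k-1]}$ and seeks a common last coordinate $z\in G_k$. Once the first $k-1$ coordinates are frozen, every relevant constraint becomes \emph{linear} in $z$, and the existence of a common $z$ reduces to the purely linear-algebraic Observation~\ref{nonzeroObs} (two vectors outside a span admit a common non-annihilating $z$). The exceptional set for that spanning condition is then counted by the $k$-variable hypothesis, where the two independent coefficient families $\lambda,\mu$ (one for $x$, one for $y$) disappear cleanly under a single application of Lemma~\ref{biasHomog}. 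In your transposed version the constraints on the common point $z\in G_{[k-1]}$ remain multilinear, so no linear-algebra shortcut is available and the cross-slice Fourier modes remain uncontrolled. To repair the argument you would either have to weaken (ii) to a ``for most pairs $(a,b)$'' statement and average — at which point you are essentially reconstructing the paper's computation — or revert to the paper's orientation of the common-point lemma.
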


\begin{proof}Write $r = r_0 + r_1$ and reorder maps so that $k \in I_i$ if and only if $i \in [r_0 + 1, r]$. Also, write $S_0 = \{x_{[k-1]} \in G_{[k-1]} \colon (\forall i \in [r_0])\gamma_i(x_{I_i}) = 0\}$ and $S_1 = \{x_{[k]} \in G_{[k]} \colon (\forall i \in [r_0 + 1, r])\gamma_i(x_{I_i}) = 0, \rho(x_{[k]}) \not= 0\}$. The set we are interested then becomes $S = (S_0 \times G_k) \cap S_1.$ We first prove that for almost all of pairs $(x_{[k-1]}),$ $(y_{[k-1]}) \in G_{[k-1]}$ we have some $z \in G_k$ such that $(x_{[k-1]}, z), (y_{[k-1]}, z) \in S_1$.\\
\indent We may find multilinear maps $\Gamma_i \colon G_{I_i \setminus \{k\}} \to G_k,$ $i \in [r_0 + 1, r]$ and $R \colon G_{[k-1]} \to G_k$ such that
\[\gamma_i(x_{I_i}) = \Gamma_i(x_{I_i \setminus \{k\}}) \cdot x_k\text{ and }\rho(x_{[k]}) = R(x_{[k-1]}) \cdot x_k.\]
Observe that if $x_{[k-1]}, y_{[k-1]} \in G_{[k-1]}$ are such that
\[R(x_{[k-1]}), R(y_{[k-1]}) \notin \spn\Big\{\Gamma_i(x_{I_i \setminus \{k\}}), \Gamma_i(y_{I_i \setminus \{k\}}) \colon i \in [r_0 + 1, r]\Big\},\]
then we may certainly get a $z \in G_k$ such that $(x_{[k-1]}, z), (y_{[k-1]}, z) \in S_1$. For the sake of completeness, we include a short proof.

\begin{observation}\label{nonzeroObs}Let $G$ be a $\mathbb{F}$-vector space with a dot product $\cdot$. Let $v_1, v_2, u_1, \dots, u_m \in G$ be elements such that $v_1, v_2 \notin \spn\{u_1, \dots, u_m\}$. Then we have $z \in G$ such that $v_1 \cdot z, v_2 \cdot z \not= 0$, but $u_i \cdot z = 0$ for all $i \in [m]$.\end{observation}

\begin{proof}[Proof of Observation~\ref{nonzeroObs}.] Suppose contrary, for any $z$ with $u_i \cdot z = 0$ for all $i \in [m]$, we have $v_1 \cdot z = 0$ or $v_2 \cdot z = 0$. Suppose that we have $z_1$ and $z_2$ such that $u_i \cdot z_1 = 0$ and $u_i \cdot z_2 = 0$ for all $i \in [m]$, but $v_1 \cdot z_1 \not= 0, v_2 \cdot z_2 \not= 0$. Then $v_2 \cdot z_1 = 0, v_1 \cdot z_2 = 0$ and hence $(\forall i \in [m]) u_i \cdot (z_1 + z_2) = 0$, $v_1 \cdot (z_1 + z_2) \not =0, v_2 \cdot (z_1 + z_2) \not= 0$, which is a contradiction. Hence, w.l.o.g.\ we have that whenever $u_i \cdot z = 0$ for all $i \in [m]$, then $v_1 \cdot z = 0$. But then $v_1 \in \spn \{u_1, \dots, u_m\}$, which is the final contradiction.\end{proof}

We count the number of pairs $x_{[k-1]}, y_{[k-1]} \in G_{[k-1]}$ such that
\[R(x_{[k-1]}) \in \spn\Big\{\Gamma_i(x_{I_i \setminus \{k\}}), \Gamma_i(y_{I_i \setminus \{k\}}) \colon i \in [r_0 + 1, r]\Big\},\]
by counting for each linear combination $\lambda, \mu \in \mathbb{F}^{[r_0 + 1, r]}$ how often
\[R(x_{[k-1]}) = \sum_{i \in [r_0 + 1, r]} \lambda_i \Gamma_i(x_{I_i \setminus \{k\}}) + \sum_{i \in [r_0 + 1, r]} \mu_i \Gamma_i(y_{I_i \setminus \{k\}})\]
happens (and analogously for $R(y_{[k-1]})$). The density of such pairs is exactly
\begin{equation}\label{explDensRel}\ex_{x_{[k-1]}, y_{[k-1]}} \ex_t \chi\Big(\rho(x_{[k-1]}, t) - \sum\limits_{i \in [r_0 + 1, r]} \lambda_i \gamma_i(x_{I_i \setminus \{k\}}, t) - \sum\limits_{i \in [r_0 + 1, r]} \mu_i \gamma_i(y_{I_i \setminus \{k\}}, t)\Big).\end{equation}

Using Lemma~\ref{biasHomog}, we may bound~\eqref{explDensRel} by
\[\begin{split}&\ex_{y_{[k-1]}} \Big|\ex_{x_{[k-1]}, t}\chi\Big(\rho(x_{[k-1]}, t) - \sum\limits_{i \in [r_0 + 1, r]} \lambda_i \gamma_i(x_{I_i \setminus \{k\}}, t) - \sum\limits_{i \in [r_0 + 1, r]} \mu_i \gamma_i(y_{I_i \setminus \{k\}}, t)\Big)\Big| \\
&\hspace{1cm}\leq \ex_{y_{[k-1]}} \Big|\ex_{x_{[k-1]}, t}\chi\Big(\rho(x_{[k-1]}, t) - \sum\limits_{i \in F} \lambda_i\gamma_i(x_{[k-1]}, t)\Big| \leq \ex_{y_{[k-1]}} \eta  = \eta,\end{split}\]
where $F \subset [r]$ is the set of all $i$ such that $I_i = [k]$. From this we deduce that for all but at most $2|\mathbb{F}|^{2r}\eta$ proportion of pairs $x_{[k-1]},$ $y_{[k-1]} \in G_{[k-1]}$ we have some $z \in G_k$ such that $(x_{[k-1]}, z),$ $(y_{[k-1]}, z) \in S_1$. Moreover, for any given pair, we have at least $|\mathbb{F}|^{-2(r+1)}|G_k|$ such $z$, since for $\lambda_1, \lambda_2 \in \mathbb{F}$
\[\Big\{z \in G_k \colon \Big((\forall i \in [r_0 + 1, r]) \gamma_i(x_{I_i \setminus \{k\}}, z) = \gamma_i(y_{I_i \setminus \{k\}}, z) = 0\Big), \rho(x_{[k-1]}, z) = \lambda_1, \rho(y_{[k-1]}, z) = \lambda_2\Big\}\]
is an at most $2(r+1)$-codimensional coset.\\

Write $F' = \{i \in [r] \colon I_i = [k-1]\}$. Fix $t \in G_k$, and consider analytic rank of the map $\tau_{t, \lambda} \colon G_{[k-1]} \to \mathbb{F}$ defined by
\[\tau_{t,\lambda}(x_{[k-1]}) = \rho(x_{[k-1]}, t) - \sum_{i \in F'}\lambda_i \gamma_i(x_{[k-1]}) - \sum_{i \in F} \lambda_i \gamma_i(x_{[k-1]}, t)\]
for $\lambda \in \mathbb{F}^{F \cup F'}$. If analytic ranks of $\tau_{t,\lambda}$ are small for all choices of $\lambda \in \mathbb{F}^{F \cup F'}$, then the induction hypothesis applies, and $S_{k \colon t}$ (recall that this is the slice notation $S_{k \colon t} = \{z_{[k-1]} \in G_{[k-1]} \colon (z_{[k-1]}, t) \in S\}$) is connected and of diameter at most $(2k-1)(2^{k-1}-1)$. For any fixed $\lambda \in \mathbb{F}^{F \cup F'}$, we have
\[\begin{split} &\ex_{t} \bigg|\ex_{x_1, \dots, x_{k-1}}\chi\Big(\rho(x_{[k-1]}, t) - \sum_{i \in F'}\lambda_i \gamma_i(x_{[k-1]}) - \sum_{i \in F} \lambda_i \gamma_i(x_{[k-1]}, t)\Big)\bigg|\end{split}\]
\[\begin{split} =&\ex_{t} \ex_{x_1, \dots, x_{k-1}}\chi\Big(\rho(x_{[k-1]}, t) - \sum_{i \in F'}\lambda_i \gamma_i(x_{[k-1]}) - \sum_{i \in F} \lambda_i \gamma_i(x_{[k-1]}, t)\bigg)\\
&\hspace{2cm}\text{by Lemma~\ref{biasHomog}}\\
\leq &\ex_{x_1, \dots, x_{k-1}, t} \chi\Big(\rho(x_{[k-1]}, t) - \sum_{i \in F} \lambda_i \gamma_i(x_{[k-1]}, t)\Big)\\
\leq & \eta\end{split}\]
hence, by averaging, we obtain a set $T \subset G_k$ of density at least $1 - \frac{1}{2} |\mathbb{F}|^{-2r -2}$, such that
\begin{equation}\label{xkrank}(\forall t \in T)(\forall \lambda \in \mathbb{F}^{F \cup F'})\ex_{x_1, \dots, x_{k-1}} \chi\Big(\rho(x_{[k-1]}, t) - \sum\limits_{i \in F'} \lambda_i \gamma_i(x_{[k-1]}) - \sum\limits_{i \in F} \lambda_i \gamma_i(x_{[k-1]}, t)\Big) \leq 2 |\mathbb{F}|^{3r + 2}\eta.\end{equation}
Consequently, by induction hypothesis, for each $z \in T$, $S_{k \colon z}$ is connected and of diameter at most $(2k-1)(2^{k-1}-1)$.\\

We are now ready to prove that $S$ is connected and of bounded diameter. We do this in two steps, the first one being to show that this holds for a very large subset of $S$, and then the second is to extend this to whole $S$.\\

Let $X \subset G_{[k-1]}$ be the set of all $x_{[k-1]} \in G_{[k-1]}$ such that for proportion at least $1 - 2 |\mathbb{F}|^{r}\sqrt{\eta}$ of $y_{[k-1]} \in G_{[k-1]}$, we have at least $|\mathbb{F}|^{-2(r+1)}|G_k|$ of $z \in G_k$ such that $(x_{[k-1]},z), (y_{[k-1]}, z) \in S_1$. By the previous argument, we get $|X| \geq \Big(1 - 2|\mathbb{F}|^{r}\sqrt{\eta}\Big)|G_{[k-1]}|$. We claim that $(X \times G_k) \cap S$ is connected and of diameter at most $(2k-1)(2^k-2) + 3$. Indeed, let $x_{[k]}, y_{[k]} \in (X \times G_k) \cap S$. Since $|S_0| \geq |\mathbb{F}|^{-(k-1)r}|G_{[k-1]}| > 4|\mathbb{F}|^{r}\sqrt{\eta} |G_{[k-1]}|$, by the way we defined $X$, we may find some $u_{[k-1]} \in S_0$ such that we have at least $|\mathbb{F}|^{-2(r+1)}|G_k|$ of $z \in G_k$ such that $(x_{[k-1]},z),$ $(u_{[k-1]}, z) \in S_1$, and we also have at least $|\mathbb{F}|^{-2(r+1)}|G_k|$ of $z' \in G_k$ such that $(y_{[k-1]},z'),$ $(u_{[k-1]}, z') \in S_1$. In particular, recalling that $|T| \geq \Big(1 - \frac{1}{2}|\mathbb{F}|^{-2r -2}\Big)|G_{[k]}|$, we have a choice of $z, z' \in T$ with the above properties. But, $S_{k \colon z}$ and $S_{k \colon z'}$ are connected and of diameter at most $(2k-1)(2^{k-1}-1)$, which completes the first step.\\ 

Finally, take any $x_{[k]} \in S$. Since $x_{[k]}$ is at distance at most $k$ to at least $|\mathbb{F}|^{-k(r+1)}|G_{[k]}|$ of points in $S$, and $|S \setminus (X \times G_k)| \leq |G_{[k-1]} \setminus X| |G_k| \leq 2|\mathbb{F}|^{r}\sqrt{\eta} |G_{[k]}|$, at least one such point lies in $(X \times G_k) \cap S$, and we are done, with the final diameter bound being $(2k-1)(2^k-2) + 3 + 2k\leq (2k+1)(2^k-1)$.\end{proof}

Let $\overline{C} = C^{\bm{weak}}_k$ and $\overline{D} = D^{\bm{weak}}_k$. 

\begin{proof}[Proof of Theorem~\ref{strongInvThm}]Apply Theorem~\ref{weakInvThm} to $\alpha$ to find $m_0 \leq \overline{C} \log^{\overline{D}} c^{-1}$ and multilinear $\beta_i \colon G_{I_i} \to \mathbb{F}$, $i \in [m_0]$ where $\emptyset \not= I_i \subset [k-1]$ and
\[\Big\{x_{[k]} \in G_{[k]} \colon (\forall i \in [m_0]) \beta_i(x_{I_i}) = 0\Big\} \subset \{\alpha = 0\}.\]
Write $\alpha(x_{[k]}) = A(x_{[k-1]}) \cdot x_k$ for a multilinear map $A \colon G_{[k-1]} \to G_k$. Thus, we also have
\[\Big\{x_{[k-1]} \in G_{[k-1]} \colon (\forall i \in [m_0]) \beta_i(x_{I_i}) = 0\Big\} \subset \{A = 0\}.\]
For a set $Q$, the \emph{power-set} of $Q$ is the collection of all subsets of $Q$ (including $\emptyset$ and $Q$ itself) and is denoted by $\mathcal{P}Q$. By induction on the size of up-set\footnote{Collection of sets closed under taking supersets.} $\mathcal{F} \subset \mathcal{P}[k-1]$, we prove the following proposition.

\begin{proposition}\label{strongInverseInductionStep}Let $\mathcal{F} \subset \mathcal{P}[k-1]$ be an up-set. Then there are constants $C_{\mathcal{F}}, D_{\mathcal{F}}$ with the following property. We may find $m_{\mathcal{F}}, n_{\mathcal{F}} \leq C_{\mathcal{F}} \log^{D_{\mathcal{F}}} c^{-1}$, a collection of multilinear maps $\rho^{\mathcal{F}}_i \colon G_{J^{\mathcal{F}}_i} \to \mathbb{F}$, where $i \in [n_{\mathcal{F}}]$, $\emptyset\not= J^{\mathcal{F}}_i \subset [k-1]$, points $y^{\mathcal{F}, i}_{J^{\mathcal{F}}_i} \in G_{J^{\mathcal{F}}_i}$, another collection of multilinear maps $\beta^{\mathcal{F}}_i \colon G_{I^{\mathcal{F}}_i} \to \mathbb{F}$, where $i \in [m_{\mathcal{F}}]$, $\emptyset\not= I^{\mathcal{F}}_i \in \mathcal{P}[k-1] \setminus \mathcal{F}$, such that the multilinear map $A^{\mathcal{F}} \colon G_{[k-1]} \to G_k$ defined as
\[A^{\mathcal{F}}(x_{[k-1]}) = A(x_{[k-1]}) - \sum_{i \in [n_{\mathcal{F}}]} \rho^{\mathcal{F}}_i(x_{J^{\mathcal{F}}_i})A(y^{\mathcal{F}, i}_{J^{\mathcal{F}}_i}, x_{[k-1] \setminus J^{\mathcal{F}}_i})\]
satisfies
\[\Big\{x_{[k-1]} \in G_{[k-1]} \colon (\forall i \in [m_{\mathcal{F}}]) \beta^{\mathcal{F}}_i(x_{I^{\mathcal{F}}_i}) = 0\Big\} \subset \Big\{x_{[k-1]} \in G_{[k-1]} \colon A^{\mathcal{F}}(x_{[k-1]})= 0\Big\}.\]
We may take
\[C_{\mathcal{F}} = \Big(3 \overline{C}(7k)^{\overline{D}}\Big)^{(\overline{D} + 1)^{|\mathcal{F}|}}\hspace{1cm}\text{and}\hspace{1cm}D_{\mathcal{F}} = \overline{D} (\overline{D} + 1)^{|\mathcal{F}|}.\]
\end{proposition}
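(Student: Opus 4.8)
The plan is to prove Proposition~\ref{strongInverseInductionStep} by induction on $|\mathcal{F}|$, carrying the estimate in the two–parameter shape $C_{\mathcal{F}}\log^{D_{\mathcal{F}}}c^{-1}$ throughout. The base case $\mathcal{F}=\emptyset$ is immediate: put $A^{\emptyset}=A$ with no correction terms ($n_{\emptyset}=0$) and let $\beta^{\emptyset}_i=\beta_i$ be the $m_0\le\overline{C}\log^{\overline{D}}c^{-1}$ forms supplied by the weak inverse theorem (Theorem~\ref{weakInvThm}); their supports are arbitrary nonempty subsets of $[k-1]$, hence lie in $\mathcal{P}[k-1]\setminus\emptyset$, and $C_{\emptyset}=3\overline{C}(7k)^{\overline{D}}\ge\overline{C}$, $D_{\emptyset}=\overline{D}$ satisfy the asserted formulas. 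For the inductive step, let $\mathcal{F}\neq\emptyset$ and choose $J$ minimal in $\mathcal{F}$; then $\mathcal{F}':=\mathcal{F}\setminus\{J\}$ is again an up-set with $|\mathcal{F}'|=|\mathcal{F}|-1$, and every proper subset of $J$ lies outside $\mathcal{F}$. Applying the inductive hypothesis to $\mathcal{F}'$ and noting $\mathcal{P}[k-1]\setminus\mathcal{F}'=(\mathcal{P}[k-1]\setminus\mathcal{F})\sqcup\{J\}$, we obtain $A^{\mathcal{F}'}$ together with constraint forms that split into a block $\gamma_1,\dots,\gamma_p\colon G_J\to\mathbb{F}$ of support exactly $J$ and a block $\delta_1,\dots,\delta_q$ whose supports already lie in $\mathcal{P}[k-1]\setminus\mathcal{F}$, with $A^{\mathcal{F}'}$ vanishing on $\{(\forall j)\,\gamma_j=0\}\cap\{(\forall l)\,\delta_l=0\}$. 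So the inductive step reduces to the following task: remove the constraints $\gamma_1,\dots,\gamma_p$ from the defining system of this variety, paying either by correction terms of the permitted product shape added to $A^{\mathcal{F}'}$, or by new constraint forms whose supports are proper nonempty subsets of $J$ --- hence outside $\mathcal{F}$, and never equal to $J$, so that the induction closes.

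To clear the block I would run the dichotomy of the overview, writing $\eta$ for the threshold $2^{-2k}|\mathbb{F}|^{-(k+1)(3r+2)}$ attached to the current number $r$ of constraint forms. If some nonzero combination $\sum_{j}\lambda_j\gamma_j$ has $\ex_{x_J}\chi\big(\sum_j\lambda_j\gamma_j(x_J)\big)>\eta$, pick $j_0$ with $\lambda_{j_0}\neq0$: the form $\sum_j\lambda_j\gamma_j$ on $G_J$ then has analytic rank below $\log\eta^{-1}=O_k(r)$, so by the weak inverse theorem $\bm{Weak}(|J|)$ --- equivalently, since $|J|\le k-1$, by the already-available $\bm{Strong}(|J|)$ --- it lies in a variety cut out by $O_k(\log^{\overline{D}}\eta^{-1})$ forms of proper nonempty support inside $J$; adding these and keeping the $\gamma_j$ with $j\neq j_0$ only shrinks the variety (so $A^{\mathcal{F}'}$ still vanishes on it) while eliminating $\gamma_{j_0}$. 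Exhausting these non-quasirandom reductions first (each removes one support-$J$ form, so there are at most $p$ of them) leaves a block that is quasirandom in precisely the sense of the hypothesis of Proposition~\ref{nonzeroConn}. We then peel off the remaining $\gamma_j$ one at a time: applying Proposition~\ref{nonzeroConn} with $\rho=\gamma_j$ and the other $\gamma$'s and $\delta$'s as constraint forms shows that the bad set $\{(\forall j'\neq j)\,\gamma_{j'}=0,\ (\forall l)\,\delta_l=0,\ \gamma_j\neq0\}$ is connected and of diameter $O_k(1)$; telescoping $A^{\mathcal{F}'}$ along a bounded-length walk of single-coordinate steps inside this set back to the subvariety where $A^{\mathcal{F}'}$ already vanishes, using multilinearity of $A^{\mathcal{F}'}$, expresses $A^{\mathcal{F}'}$ on the enlarged variety $\{(\forall j'\neq j)\,\gamma_{j'}=0\}\cap\{(\forall l)\,\delta_l=0\}$ as $A^{\mathcal{F}'}$ minus a bounded sum of products $\rho_i(x_{J_i})\,A^{\mathcal{F}'}(y^i,x_{[k-1]\setminus J_i})$. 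Since $A^{\mathcal{F}'}$ is itself $A$ minus corrections of exactly this form, every such specialisation of $A^{\mathcal{F}'}$ unwinds into a bounded combination of specialisations of the original $A$, so the accumulated correction has the required type; each such peeling adds only $O_k(1)$ correction terms and no new constraint forms, and shrinking the threshold to a larger $r$ only makes quasirandomness easier. Relabelling after the block is cleared produces $A^{\mathcal{F}}$, the $\rho^{\mathcal{F}}_i$, the points $y^{\mathcal{F},i}$, and the $\beta^{\mathcal{F}}_i$ with supports in $\mathcal{P}[k-1]\setminus\mathcal{F}$.

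Finally one tracks constants. An $\mathcal{F}$-step performs at most $p\le m_{\mathcal{F}'}$ moves; the quasirandom peelings add only $O_k(1)$ correction terms each, while the non-quasirandom ones invoke the weak (or lower-variable strong) inverse theorem, whose output size is governed by $\log\eta^{-1}=O_k(r)$. The subtlety is that $\eta$ depends on the running count $r$; but a support-$J$ form, once reduced, is never reprocessed, so a single a priori estimate shows $r$ grows by only a polynomial (degree $O(\overline{D})$) factor over the whole step, which feeds back consistently. Collecting terms gives $m_{\mathcal{F}},n_{\mathcal{F}}\le(3\overline{C}(7k)^{\overline{D}})^{O(1)}\,C_{\mathcal{F}'}^{\overline{D}+1}\log^{(\overline{D}+1)D_{\mathcal{F}'}}c^{-1}$, which is at most $C_{\mathcal{F}}\log^{D_{\mathcal{F}}}c^{-1}$ for $C_{\mathcal{F}}=C_{\mathcal{F}'}^{\overline{D}+1}$ and $D_{\mathcal{F}}=(\overline{D}+1)D_{\mathcal{F}'}$ (using $\overline{C}\le C_{\mathcal{F}'}$ and $\overline{D}\le D_{\mathcal{F}'}$), and these recursions solve to the claimed closed forms $C_{\mathcal{F}}=(3\overline{C}(7k)^{\overline{D}})^{(\overline{D}+1)^{|\mathcal{F}|}}$ and $D_{\mathcal{F}}=\overline{D}(\overline{D}+1)^{|\mathcal{F}|}$. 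I expect the hard part to be the quasirandom case: converting the purely combinatorial conclusion ``the bad set is connected of bounded diameter'' into the clean algebraic identity displaying $A^{\mathcal{F}'}$ on the enlarged variety via a correction of the prescribed product form --- choosing the walk and grouping the telescoped contributions so that each $\rho_i$ depends only on the coordinates of some $J_i$ while the remaining factor is a genuine specialisation of $A$ --- and, simultaneously, pinning down the a priori count estimate precisely enough that the dependence of $\eta$ on the number of constraint forms does not spoil the polynomial bookkeeping.
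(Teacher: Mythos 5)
Your strategy matches the paper's: induct on $|\mathcal{F}|$ by removing a minimal support set $J$, split the support-$J$ forms via the biased/quasirandom dichotomy (reducing biased combinations with the weak inverse theorem, eliminating quasirandom ones via Proposition~\ref{nonzeroConn}), and unwind corrections back to $A$. Peeling quasirandom forms one at a time rather than grouping them into a complementary family $\rho_1,\dots,\rho_{s-d}$ as the paper does is cosmetic; the bounds bookkeeping is the same shape and the recursion closes to the stated constants.

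The genuine gap is precisely where you flag it, and the description you offer would not close. You propose telescoping along a walk ``back to the subvariety where $A^{\mathcal{F}'}$ already vanishes,'' but the walk supplied by Proposition~\ref{nonzeroConn} stays inside the bad slice $\{\gamma_j\neq 0\}$, which is disjoint from that subvariety; and without further structure, the telescoped differences $x^{i}_S-x^{i+1}_S$ need not lie in the zero variety either (both endpoints have $\gamma_j\ne 0$, so nothing forces $\gamma_j$ to vanish on the difference). The paper's Proposition~\ref{layerEquation} supplies the missing mechanism, which has three ingredients you do not mention. First, fix for each quasirandom $\rho_i$ a reference point $y^i_S$ with $\rho_i(y^i_S)=1$, $\rho_j(y^i_S)=0$ for $j\neq i$, and $\gamma_l(y^i_S)=0$; the walk terminates at $y^{i_0}_S$, not at the zero variety. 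Second, and crucially, \emph{rescale} coordinates along the path so that $\rho_{i_0}$ is constant on every vertex --- this is what forces the telescoped differences to satisfy $\rho_{i_0}=0$ and hence fall one level down (where the paper runs an inner induction on the largest index $i$ with $\rho_i\ne 0$; in your one-at-a-time version they drop into the zero variety directly). Third, multilinearity of $\rho_{i_0}$ converts the accumulated scaling $\prod_s\sigma_s$ at the terminal point $(\sigma_s y^{i_0}_s)_s$ into exactly $\rho_{i_0}(x_S)$, which is what produces the product form $\rho_{i_0}(x_S)\,A'(y^{i_0}_S,\cdot)$. Related, fixing the reference points forces new constraints $\beta^{\mathcal{F}}_j(y^i_{S\cap I^{\mathcal{F}}_j}, x_{I^{\mathcal{F}}_j\setminus S})=0$ on the complementary coordinates (the set $V$ in the paper), which must be counted among the new $\beta^{\mathcal{F}\cup\{S\}}$'s; your ``adds only $O_k(1)$ correction terms'' undercounts both these and the $\approx n_{\mathcal{F}'}$ terms each unwound specialisation of $A^{\mathcal{F}'}$ contributes, though this does not threaten the polynomial shape of the final bound.
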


\begin{proof}[Proof of Proposition~\ref{strongInverseInductionStep}]For $\mathcal{F} = \emptyset$, we take the given maps $\beta_i$, and hence $n_\emptyset = 0, m_\emptyset = m_0$. Let $\mathcal{F} \cup \{S\}$ be a given up-set, where $S$ is a minimal set inside it, thus making $\mathcal{F}$ an up-set as well. Assume that the claim holds for $\mathcal{F}$, and that we get multilinear maps $\beta^{\mathcal{F}}_i \colon G_{I^{\mathcal{F}}_i} \to \mathbb{F}$, $i \in [m_{\mathcal{F}}]$, and $A' = A^{\mathcal{F}}$, with the property above. If no $i \in [m_{\mathcal{F}}]$ satisfies $I^{\mathcal{F}}_i = S$, then the same collection works for $\mathcal{F} \cup \{S\}$. Thus, after reordering maps $\beta^{\mathcal{F}}_i$ if necessary, assume that $I^{\mathcal{F}}_i = S$ if and only if $i \in [s]$. Let $\{\lambda^1, \dots, \lambda^d\} \subset \mathbb{F}^s$ be a maximal independent set such that for each $j \in [d]$
\[\ex_{x_S} \chi\Big(\sum_{i \in [s]}\lambda^j_i \beta^{\mathcal{F}}_i(x_S)\Big) \geq \eta = 2^{-2|S|} |\mathbb{F}|^{-(|S|+1)(3m_{\mathcal{F}}+2)}.\]
Thus, if we extend $\lambda_1, \dots, \lambda_d$ by further $\mu^1, \dots, \mu^{s-d}$ to a basis of $\mathbb{F}^s$, and setting 
\[\gamma_i(x_S) = \sum_{j \in [s]} \lambda^i_j \beta^{\mathcal{F}}_j(x_{S})\]
for $i \in [d]$, and 
\[\rho_i(x_S) = \sum_{j \in [s]} \mu^i_j \beta^{\mathcal{F}}_j(x_S)\]
for $i \in [s-d]$, then we have the following properties:
\begin{itemize}
\item[\textbf{(i)}]$(\forall i \in [d])$, $\ex_{x_S} \chi(\gamma_i(x_S)) \geq \eta$,
\item[\textbf{(ii)}]$(\forall \nu \in \mathbb{F}^d$, $\tau \in \mathbb{F}^{s-d} \setminus \{0\})$, $\ex_{x_S} \chi\Big(\sum_{i \in [d]} \nu_i \gamma_i(x_S) + \sum_{i \in [s-d]} \tau_i \rho_i(x_S)\Big) \leq \eta$,
\item[\textbf{(iii)}]$\bigg\{x_{[k-1]} \in G_{[k-1]} \colon (\forall i \in [d]) \gamma_i(x_S) = 0, (\forall i \in [s-d]) \rho_i(x_S) = 0, (\forall i \in [s + 1, m_{\mathcal{F}}]) \beta^{\mathcal{F}}_i(x_{I^{\mathcal{F}}_i}) = 0\bigg\}$\\
$\phantom{a}\hspace{1cm}\subset \{x_{[k-1]} \in G_{[k-1]} \colon A'(x_{[k-1]}) = 0\}.$
\end{itemize}
We first deal with $\rho_i$. Let $F = \{i \in [s+1, m_{\mathcal{F}}] \colon I^{\mathcal{F}}_i \subset [k-1] \setminus S\}$ and $Z = \Big\{x_{[k-1] \setminus S} \in G_{[k-1] \setminus S} \colon (\forall i \in F) \beta_i(x_{I^{\mathcal{F}}_i}) = 0\Big\}$. The property \textbf{(ii)} and Proposition~\ref{nonzeroConn} imply that for each $z_{[k-1] \setminus S} \in G_{[k-1] \setminus S}$, for each $i \in [s-d]$, the set 
\[\begin{split}\Big\{x_{S} \in G_S \colon\hspace{3pt}\rho_i(x_S) \not= 0,\hspace{3pt}(\forall j \in [i + 1, s-d]) \rho_j(x_S) = 0,\hspace{3pt}&(\forall j \in [d]) \gamma_j(x_S) = 0,\\
 &(\forall j \in [s+1, m_{\mathcal{F}}] \setminus F) \beta^{\mathcal{F}}_j(x_{S \cap I^{\mathcal{F}}_j}, z_{I^{\mathcal{F}}_j \setminus S}) = 0\Big\}\end{split}\]
is connected.

%

Next, we pick points $y^i_S \in G_S$ for each $i \in [s-d]$ and define
\[V = \Big\{z_{[k-1] \setminus S} \in G_{[k-1] \setminus S} \colon (\forall i \in [s-d])(\forall j \in [s+1, m_{\mathcal{F}}] \setminus F) \beta_j^{\mathcal{F}}(y^i_{S \cap I^{\mathcal{F}}_j}, z_{I^{\mathcal{F}}_j \setminus S}) = 0\Big\}.\]
We claim that we may choose the points $y_S^1, \dots, y_S^{s-d} \in G_S$ so that
\[(\forall i \in [s-d]) \rho_i(y^i_S) = 1, (\forall j \in [s-d] \setminus \{i\}) \rho_j(y^i_S) = 0, (\forall l \in [d]) \gamma_l(y^i_S) = 0.\]
Indeed, we may certainly satisfy this condition since otherwise get that some $\rho_i = 0$ whenever the maps $\rho_j$ for $j \not= i$ and $\gamma_j$ are all zero, and we may discard $\rho_i$.\\

Next, we set
\[W = \Big\{x_{[k-1]} \in G_{[k-1]} \colon (\forall j \in [d]) \gamma_j(x_S) = 0, (\forall j \in [s+1, m_{\mathcal{F}}] \setminus F) \beta^{\mathcal{F}}_j(x_{I^{\mathcal{F}}_j}) = 0\Big\}\]
and we observe the following.
\begin{proposition}\label{layerEquation}For all $x_{[k-1]} \in W \cap (G_S \times (Z \cap V))$ we have the equality
\begin{equation}\label{leq}A'(x_{[k-1]}) = \sum_{i \in [s-d]} \rho_i(x_S) A'(y^i_S, x_{[k-1] \setminus S}).\end{equation}
\end{proposition}

\begin{proof}[Proof of Proposition~\ref{layerEquation}]Fix $z_{[k-1] \setminus S} \in Z \cap V$. We show that for all $x_S \in W_{z_{[k-1] \setminus S}}$
\[A'(x_S, z_{[k-1] \setminus S}) = \sum_{i \in [s-d]} \rho_i(x_S) A'(y^i_S, z_{[k-1] \setminus S})\]
holds. We argue by induction on the maximal index $i \in [s-d]$ such that $\rho_i(x_S) \not= 0$, and if there are no such $i$, we put $i = 0$. Thus the base case is when all $\rho_i(x_S) = 0$. However, since $(x_S, z_{[k-1] \setminus S}) \in W$ and $z_{[k-1] \setminus S} \in Z$, by property \textbf{(iii)}, it follows that $A'(x_S, z_{[k-1] \setminus S}) = 0$. The right hand side is also zero, so the identity holds.\\
\indent Assume now that the claim holds for values of $i$ smaller than some $i_0 \geq 1$, and that we are given $x_S \in W_{z_{[k-1] \setminus S}}$ such that $\rho_{i_0}(x_S) \not= 0$, but $\rho_{i}(x_S) = 0$ for $i > i_0$. Recall that the set
\[\begin{split}R = \Big\{g_{S} \in G_S \colon \rho_{i_0}(g_S) \not= 0,\hspace{3pt}&(\forall j \in [i_0 + 1, s-d])\hspace{2pt} \rho_j(g_S) = 0,\hspace{3pt}(\forall j \in [d])\hspace{2pt}\gamma_j(g_S) = 0,\\ 
&(\forall j \in [s+1, m_{\mathcal{F}}] \setminus F)\hspace{2pt} \beta^{\mathcal{F}}_j(g_{S \cap I^{\mathcal{F}}_j}, z_{I^{\mathcal{F}}_j \setminus S}) = 0\Big\} \subset W_{z_{[k-1] \setminus S}}\end{split}\]
is connected. Thus, since $x_S, y^{i_0}_S \in R$, there is a sequence of points $x_S = x^0_S, x^1_S, \dots, x^l_S = y^{i_0}_S$ inside this set, such that every two consecutive points differ in exactly one coordinate (we call such points \emph{neighbouring}). We introduce the following piece of notation. When $a$ and $b$ are two points in $G_{[k]}$ differing only in coordinate $i$, we write $a-b$ to be the point with coordinates $(a-b)_j = a_j = b_j$ for $j \not= i$ and $(a-b)_i = a_i - b_i$. This notation makes calculations much neater, since for a multilinear map $\phi$ on $G_{[k]}$, we have $\phi(a - b) = \phi(a) - \phi(b)$.\\
\indent Let $\tau_i = \rho_{i_0}(x^i_S)$ for $i \in [0, l]$. If $x^0_S$ and $x^1_S$ differ in coordinate $c_0$, multiply each of points $x^1_S, \dots, x^l_S$ at coordinate $c_0$ by $\tau_0 \tau_1^{-1}$. The new points still have the property that the consecutive points are either neighbouring or identical, and that they all lie in the set $R$. Misusing the notation, we keep writing $x^i_S$ for the modified points. If $c_1$ is the coordinate where $x^1_S$ and $x^2_S$ differ, multiply all points among $x^2_S, \dots, x^l_S$ by $\tau_1 \tau_2^{-1}$ at coordinate $c_1$, and proceed. Hence, we end up with a sequence $x_S = x^0_S, x^1_S, \dots, x^l_S$ in $R$ such that for each $s \in S$, $x^l_s = \sigma_s y^{i_0}_s$ for some $\sigma_s \in \mathbb{F} \setminus \{0\}$, the consecutive points are either neighbouring or identical and $(\forall i \in [0,l])\rho_{i_0}(x^i_S) = \tau_0$. Hence $\rho_j(x^i_S - x^{i+1}_S) = 0$ for $j \in [i_0, s-d]$. Thus, we get
\[\begin{split}A'(x_S, z_{[k-1] \setminus S}) &= A'(x^0_{S} - x^1_S,  z_{[k-1] \setminus S}) + \dots + A'(x^{l-1}_{S} - x^l_S, z_{[k-1] \setminus S}) + A'(x^l_S, z_{[k-1] \setminus S})\\
&= A'(x^0_{S} - x^1_S,  z_{[k-1] \setminus S}) + \dots + A'(x^{l-1}_{S} - x^l_S, z_{[k-1] \setminus S}) + \Big(\prod_{s \in S}  \sigma_s\Big) A'(y^{i_0}_S, z_{[k-1] \setminus S})\\
&\hspace{2cm}\text{applying induction hypothesis to every }x^i_{S} - x^{i+1}_S\\
&= \sum_{i \in [0,l-1]}\sum_{j \in [s-d]} \rho_j(x^i_{S} - x^{i+1}_S) A'(y^j_S, z_{[k-1] \setminus S}) + \Big(\prod_{s \in S}  \sigma_s\Big) A'(y^{i_0}_S, z_{[k-1] \setminus S})\\
&\hspace{2cm}\text{using the fact that, for all } i \not= i_0, \rho_i(y^{i_0}_S) = 0,\\
&= \sum_{i \in [0,l-1]}\sum_{j \in [s-d]} \rho_j(x^i_{S} - x^{i+1}_S) A'(y^j_S, z_{[k-1] \setminus S}) + \Big(\prod_{s \in S}  \sigma_s\Big) \sum_{i \in [s-d]} \rho_i(y^{i_0}_S)A'(y^i_S, z_{[k-1] \setminus S})\\
&= \sum_{i \in [0,l-1]}\sum_{j \in [s-d]} \rho_j(x^i_{S} - x^{i+1}_S) A'(y^j_S, z_{[k-1] \setminus S}) + \sum_{i \in [s-d]} \rho_i(x^{l}_S)A'(y^i_S, z_{[k-1] \setminus S})\\
&= \sum_{j \in [s-d]} \Big(\sum_{i \in [0,l-1]}\rho_j(x^i_{S} - x^{i+1}_S) A'(y^j_S, z_{[k-1] \setminus S}) + \rho_j(x^{l}_S)A'(y^j_S, z_{[k-1] \setminus S})\Big)\\
&= \sum_{j \in [s-d]} \rho_j(x_S)A'(y^j_S, z_{[k-1] \setminus S}),\end{split}\]
as desired.\end{proof}

%
%

Hence, the multilinear map $A^{\mathcal{F} \cup \{S\}}$ defined by
\[A^{\mathcal{F} \cup \{S\}}(x_{[k-1]}) = A'(x_{[k-1]}) - \sum_{i \in [s-d]} \rho_i(x_S) A'(y^i_S, x_{[k-1] \setminus S})\]
satisfies
\[W \cap (G_S \times (V \cap Z)) \subset \Big\{x_{[k-1]} \in G_{[k-1]} \colon A^{\mathcal{F} \cup \{S\}}(x_{[k-1]}) = 0\Big\}.\]
On the other hand, recalling the property \textbf{(i)}, applying Theorem~\ref{weakInvThm} to each $\gamma_i$, allows us to find
\[m_i \leq \overline{C}\log^{\overline{D}} {\eta}^{-1} \leq \overline{C} \Big(k (3m_{\mathcal{F}} + 4)\Big)^{\overline{D}}\]
and further multilinear maps $\beta'_{i, j} \colon G_{I_{i, j}} \to \mathbb{F}$, $j \in [m_i]$, $\emptyset \not= I_{i,j} \subset S \setminus \{\max S\}$ such that $\{x_S \in G_S\colon (\forall j \in [m_i]) \beta'_{i,j}(x_{I_{i,j}}) = 0\} \subset \{x_S \in G_S \colon \gamma_i(x_S) = 0\}$. Thus 
\[\begin{split}&\Big\{x_{[k-1]} \in G_{[k-1]} \colon \Big(\forall i \in [s-d]\hspace{2pt}\forall j \in [s+1, m_{\mathcal{F}}] \setminus F\Big)\hspace{3pt}\beta_j^{\mathcal{F}}(y^i_{S \cap I^{\mathcal{F}}_j}, x_{I^{\mathcal{F}}_j \setminus S}) = 0\Big\}\\
&\hspace{2cm} \cap \Big\{x_{[k-1]} \in G_{[k-1]} \colon \Big(\forall i \in [s+1, m_{\mathcal{F}}]\Big) \beta^{\mathcal{F}}_i(x_{I^{\mathcal{F}}_i}) = 0\Big\}\\
&\hspace{2cm} \cap \Big\{x_{[k-1]} \in G_{[k-1]} \colon \Big(\forall i \in [d]\hspace{2pt}\forall j \in [m_i]\Big) \beta'_{i, j}(x_{I_{i,j}}) = 0\Big\}\\
&\hspace{1cm}\subset \Big\{x_{[k-1]} \in G_{[k-1]} \colon A^{\mathcal{F} \cup \{S\}}(x_{[k-1]}) = 0\Big\},\end{split}\]
as desired. When it comes to bounds, we may take
\[m_{\mathcal{F} \cup \{S\}} \leq m_{\mathcal{F}}^2 + m_{\mathcal{F}} + \overline{C} m_{\mathcal{F}} \Big(k (3m_{\mathcal{F}} + 4)\Big)^{\overline{D}},\]
and
\[n_{\mathcal{F} \cup \{S\}} \leq n_{\mathcal{F}} + m_{\mathcal{F}}.\]
This finishes the proof.\end{proof}

Applying Proposition~\ref{strongInverseInductionStep} with $\mathcal{F} = \mathcal{P}[k-1]$ implies that 
\[A(x_{[k-1]}) = \sum_{i \in [n_{\mathcal{F}}]} \rho^{\mathcal{F}}_i(x_{J^{\mathcal{F}}_i})A(y^{\mathcal{F}, i}_{J^{\mathcal{F}}_i}, x_{[k-1] \setminus J^{\mathcal{F}}_i})\]
for all $x_{[k-1]} \in G_{[k-1]}$. Taking dot product with $x_k$ completes the proof.\end{proof}

Hence, we may take $C^{\bm{strong}}_k = \Big(3 \overline{C}(7 k)^{\overline{D}}\Big)^{(\overline{D} + 1)^{2^k}}$, and $D^{\bm{strong}}_k = \overline{D} (\overline{D} + 1)^{2^k}$, where $\overline{C} = C^{\bm{weak}}_k$ and $\overline{D} = D^{\bm{weak}}_k$. Hence, if $t$ is a quantity such that $C^{\bm{weak}}_k \leq 2^{k^{2^{t}}}$ and $D^{\bm{weak}}_k \leq 2^{2^{t}}$, then
\begin{equation}\label{strongBounds}C^{\bm{strong}}_k \leq 2^{k^{2^{t + O(k)}}}\hspace{1cm}\text{and}\hspace{1cm}D^{\bm{strong}}_k \leq 2^{2^{t + O(k)}}.\end{equation}
\qed

\boldSection{$\bm{Strong}(k) \implies \bm{Inner}(k-1)$}

\noindent\emph{Proof.} Let $\overline{C} = C^{\bm{strong}}_k$ and $\overline{D} = D^{\bm{strong}}_k$. The theorem will follow from the following proposition. For a given $\mathcal{F} \subset \mathcal{P}([k-1])$, we say that a multiaffine map $B \colon G_{[k-1]} \to H$ is \emph{$\mathcal{F}$-supported} if it can be written as $B(x_{[k-1]}) = \sum_{I \in \mathcal{F}} B_I(x_I)$, for some multilinear maps $B_I \colon G_I \to H$.

\begin{proposition} Let $\mathcal{F} \subset \mathcal{P}([k-1])$ be a non-empty down-set.\footnote{A collection of set closed under taking subsets.} Then, there are constants $C_{\mathcal{F}}, D_{\mathcal{F}}$ with the following property. Let $\epsilon > 0$ and let $B_1, \dots, B_r \colon G_{[k-1]} \to H$ be multiaffine maps. For $\lambda \in \mathbb{F}^r$, let $Z_\lambda = \{x_{[k]} \in G_{[k-1]} \colon \sum_{i \in [r]} \lambda_i B_i(x_{[k-1]}) = 0\}$. Then, there are $s, t \leq C_{\mathcal{F}} \Big(r \log \epsilon^{-1}\Big)^{D_{\mathcal{F}}}$, a multiaffine map $\beta \colon G_{[k-1]} \to \mathbb{F}^s$, and a collection of $\mathcal{F}$-supported multiaffine maps $\Gamma_1, \dots, \Gamma_t \colon G_{[k-1]} \to H$ such that:
\begin{itemize}
\item[\textbf{(i)}] for each $\lambda \in \mathbb{F}^r$, there are distinct layers $L_1, \dots, L_m$ of $\beta$ and multiaffine maps $A_1, \dots, A_m \in \spn \{\Gamma_{[t]}\}$, such that $Z_\lambda \cap L_i = \{x_{[k-1]} \in G_{[k-1]} \colon A_i(x_{[k-1]}) = 0\} \cap L_i$,
\item[\textbf{(ii)}] $\Big|Z_\lambda \setminus \Big(\cup_{i \in [m]} (Z_\lambda \cap L_i)\Big)\Big| \leq \frac{2^k - |\mathcal{F}|}{2^k} \epsilon |G_{[k-1]}|$.
\end{itemize}
We may take $C_{\mathcal{F}} = \Big(2 \overline{C} (2k)^{\overline{D}}\Big)^{(\overline{D} + 1)^{2^k - |\mathcal{F}|}}$ and $D_{\mathcal{F}} = (2\overline{D} + 2)^{2^k - |\mathcal{F}|}$.
\end{proposition}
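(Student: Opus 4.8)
The proof is by induction on $|\mathcal{F}|$, run \emph{downwards}: the base case is $\mathcal{F}=\mathcal{P}([k-1])$, and the inductive step deduces the statement for a down-set $\mathcal{F}\subsetneq\mathcal{P}([k-1])$ from the statement for a strictly larger down-set. The base case is immediate, since every multiaffine map $G_{[k-1]}\to H$ is $\mathcal{P}([k-1])$-supported: take $\beta$ to be the zero map (one layer, all of $G_{[k-1]}$), take $\Gamma_1,\dots,\Gamma_r$ to be $B_1,\dots,B_r$ themselves, and for each $\lambda$ take the single layer $G_{[k-1]}$ with $A_1=\lambda\cdot B\in\spn\{\Gamma_{[r]}\}$; then $Z_\lambda\cap G_{[k-1]}=\{A_1=0\}$, nothing is uncovered, and $r\le C_{\mathcal{F}}(r\log\epsilon^{-1})^{D_{\mathcal{F}}}$ holds for the (large) claimed constants.

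For the inductive step let $\mathcal{F}\subsetneq\mathcal{P}([k-1])$ and pick $S$ to be a minimal element of $\mathcal{P}([k-1])\setminus\mathcal{F}$; then $\mathcal{F}^+:=\mathcal{F}\cup\{S\}$ is again a down-set (all proper subsets of $S$ lie in $\mathcal{F}$), $|\mathcal{F}^+|=|\mathcal{F}|+1$, and $S\ne\emptyset$ since $\emptyset\in\mathcal{F}$. Apply the inductive hypothesis to $\mathcal{F}^+$ to obtain $\beta^+\colon G_{[k-1]}\to\mathbb{F}^{s^+}$ and $\mathcal{F}^+$-supported $\Gamma^+_1,\dots,\Gamma^+_{t^+}$ with $s^+,t^+\le C_{\mathcal{F}^+}(r\log\epsilon^{-1})^{D_{\mathcal{F}^+}}$, so that for each $\lambda$ there are distinct layers $L^+_{\lambda,1},\dots,L^+_{\lambda,m}$ of $\beta^+$ and $A^+_{\lambda,i}\in\spn\{\Gamma^+_{[t^+]}\}$ with $Z_\lambda\cap L^+_{\lambda,i}=\{A^+_{\lambda,i}=0\}\cap L^+_{\lambda,i}$ and uncovered mass at most $\tfrac{2^k-|\mathcal{F}^+|}{2^k}\epsilon|G_{[k-1]}|$. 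Since $S\notin\mathcal{F}$, an $\mathcal{F}$-supported map has no $S$-indexed multilinear part, so $(\mathcal{F}^+)$-supported $=$ ($\mathcal{F}$-supported) $+$ (multilinear in $x_S$); write $\Gamma^+_j=\Gamma^{+,\mathcal{F}}_j+M_j$ accordingly, with $M_j\colon G_S\to H$ multilinear. Then $A^+_{\lambda,i}=P_{\lambda,i}+N_{\lambda,i}$ with $P_{\lambda,i}\in\spn\{\Gamma^{+,\mathcal{F}}_{[t^+]}\}$ ($\mathcal{F}$-supported) and $N_{\lambda,i}\in V:=\spn\{M_1,\dots,M_{t^+}\}$, and the whole task reduces to replacing the $S$-parts $N_{\lambda,i}$ — simultaneously over all $\lambda,i$ — by $\mathcal{F}$-supported maps on most of a refinement of $\beta^+$.

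The core is a rank dichotomy inside $V$, driven by $\bm{Strong}(k)$. Put $\tau:=s^++\lceil\log_{|\mathbb{F}|}(2^k\epsilon^{-1})\rceil+1$ and call a multilinear $N\colon G_S\to H$ \emph{dense} if $\mathbb{P}_{x_S}(N(x_S)=0)\ge|\mathbb{F}|^{-\tau}$. Choose a maximal linearly independent family $\nu^1,\dots,\nu^d\in\mathbb{F}^{t^+}$ with each $N^a:=\sum_j\nu^a_jM_j$ dense; by maximality, if $\sum_j\nu_jM_j$ is dense then $\nu\in\spn\{\nu^1,\dots,\nu^d\}$. For each $a$, density gives $\ex_h\,\mathbb{P}_{x_S}((h\cdot N^a)(x_S)=0)\ge|\mathbb{F}|^{-\tau}$, hence $\text{arank}(h\cdot N^a)\le\tau+O(1)$ for a positive density of characters $h$; feeding these into $\bm{Strong}(k)$ (and, in the edge case $|S|=1$, just rank--nullity, since then $N^a$ is linear) yields a partition-rank decomposition $N^a(x_S)=\sum_{l=1}^{p_a}\phi_{a,l}(x_{J_{a,l}})\psi_{a,l}(x_{S\setminus J_{a,l}})$ with $p_a\le C^{\bm{strong}}_k(\tau^{D^{\bm{strong}}_k}+1)$, scalar forms $\phi_{a,l}$, $H$-valued $\psi_{a,l}$, and $\emptyset\ne J_{a,l}\subsetneq S$. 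Crucially, $S$ being minimal outside $\mathcal{F}$, both $J_{a,l}$ and $S\setminus J_{a,l}$ are nonempty proper subsets of $S$ and hence lie in $\mathcal{F}$, so each $\psi_{a,l}$ is $\mathcal{F}$-supported. Now set $\beta:=(\beta^+,(\phi_{a,l})_{a,l})$ and let the new $\Gamma$-collection be $(\Gamma^{+,\mathcal{F}}_j)_j$ together with the $(\psi_{a,l})_{a,l}$; all are $\mathcal{F}$-supported, and $s,t\le s^++t^++\sum_ap_a$ stays within the claimed bounds, each step costing a $(D^{\bm{strong}}_k+1)$-power blow-up (whence $D_{\mathcal{F}}\le(D^{\bm{strong}}_k+1)D_{\mathcal{F}^+}\le(2D^{\bm{strong}}_k+2)^{2^k-|\mathcal{F}|}$, similarly for $C_{\mathcal{F}}$).

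Given $\lambda$, keep the layer $L^+_{\lambda,i}$ if $N_{\lambda,i}\in\spn\{N^1,\dots,N^d\}$ and discard it otherwise. On a kept layer, freezing the values $\phi_{a,l}(x_{J_{a,l}})$ (constant on each sublayer $L$ of $\beta$ inside $L^+_{\lambda,i}$) turns $N_{\lambda,i}$ into a fixed linear combination of the $\psi_{a,l}(x_{S\setminus J_{a,l}})$, so $A^+_{\lambda,i}$ restricted to $L$ agrees with a fixed $\mathcal{F}$-supported map in $\spn\{\Gamma_{[t]}\}$ and $Z_\lambda\cap L=\{A^+_{\lambda,i}=0\}\cap L$ has the required form; taking all such sublayers over kept $i$ gives (i). On a discarded layer, every coefficient vector representing $N_{\lambda,i}$ lies outside $\spn\{\nu^a\}$, so $N_{\lambda,i}$ is not dense; since the $S$-part of $A^+_{\lambda,i}$ equals $N_{\lambda,i}$ and $P_{\lambda,i}$ contributes no $S$-part, Lemma~\ref{biasHomog} applied in the $x_S$-variables gives $|\{A^+_{\lambda,i}=0\}|\le\mathbb{P}_{x_S}(N_{\lambda,i}(x_S)=0)\,|G_{[k-1]}|<|\mathbb{F}|^{-\tau}|G_{[k-1]}|$, so summing over the at most $m\le|\mathbb{F}|^{s^+}$ layers bounds the extra uncovered mass by $|\mathbb{F}|^{s^+-\tau}|G_{[k-1]}|\le\tfrac{1}{2^k}\epsilon|G_{[k-1]}|$, which with the $\mathcal{F}^+$-bound yields (ii).

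The step I expect to be the main obstacle is the $H$-valued bookkeeping: turning ``$\mathbb{P}_{x_S}(N^a=0)$ is not too small'' into a genuine and efficient partition-rank bound for the $H$-valued map $N^a$ via $\bm{Strong}(k)$ (one must pass through characters $h\cdot N^a$ while keeping the loss polynomial), and — relatedly — making the dichotomy uniform over the whole family $\{\lambda\}$ at once, which is exactly what the maximal independent dense family and the crude $|\mathbb{F}|^{s^+}$-over-layers count (absorbed into $\tau$) are designed to do.
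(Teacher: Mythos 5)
Your proof follows essentially the same route as the paper: a downward induction on $|\mathcal{F}|$, stripping off a maximal element $S$ of the larger down-set $\mathcal{F}^{+}=\mathcal{F}\cup\{S\}$, splitting each $\Gamma^{+}_j$ into an $\mathcal{F}$-supported part and an $S$-multilinear part, extracting a maximal independent family of ``dense'' $S$-parts (with threshold absorbing $|\mathbb{F}|^{-s^{+}}$ and $2^{-k}\epsilon$), decomposing each dense one via $\bm{Strong}$, discarding the layers whose $S$-part is sparse (costing $\lesssim \epsilon/2^{k}$ in density), and refining the surviving layers by the scalar factors of the decomposition. The dichotomy, the accounting, and the kept/discarded split all match.

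The one genuine gap is the step you yourself flag as ``the main obstacle'': producing the $H$-valued decomposition $N^{a}(x_S)=\sum_l\phi_{a,l}(x_{J_{a,l}})\psi_{a,l}(x_{S\setminus J_{a,l}})$ from the density of $N^{a}$. Your plan — apply $\bm{Strong}$ to individual scalar forms $h\cdot N^{a}$ for ``a positive density of characters $h$'' and then somehow assemble these — does not obviously go through: the decompositions of different $h\cdot N^{a}$ need not cohere, and naively summing them over a basis of $H$ would blow up the rank by $\dim H$, which is unbounded in terms of $r$ and $\epsilon$. The fix, which is the paper's move, is to treat $h$ as an extra variable: $(x_S,h)\mapsto N^{a}(x_S)\cdot h$ is a genuine multilinear form on $G_S\times H$ in $|S|+1\le k$ coordinates, and $\ex_{x_S,h}\chi\big(N^{a}(x_S)\cdot h\big)=\mathbb{P}_{x_S}(N^{a}(x_S)=0)\ge|\mathbb{F}|^{-\tau}$, so a single application of $\bm{Strong}(|S|+1)$ gives a decomposition $\sum_l \gamma_l(x_{J_l})\,\Lambda_l(x_{S\setminus J_l},h)$ with $\emptyset\ne J_l\subset S$. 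Since $h$ appears linearly, each $\Lambda_l$ is linear in $h$, i.e.\ $\Lambda_l(x_{S\setminus J_l},h)=\psi_l(x_{S\setminus J_l})\cdot h$ with $\psi_l\colon G_{S\setminus J_l}\to H$ multilinear, and cancelling $h$ gives exactly the $H$-valued decomposition you want with no loss. This is precisely the rank--nullity argument you invoke for $|S|=1$, done in higher arity.

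A minor imprecision: you write $J_{a,l}\subsetneq S$ strictly and say both $J_{a,l}$ and $S\setminus J_{a,l}$ are nonempty proper subsets of $S$. $\bm{Strong}$ only guarantees $\emptyset\ne J_{a,l}\subset S$, so $J_{a,l}=S$ can occur, in which case $\psi_{a,l}$ is a constant map $G_\emptyset\to H$. This is harmless — a constant is $\{\emptyset\}$-supported and $\emptyset\in\mathcal{F}$ for any non-empty down-set — but it should be allowed for rather than asserted away. Also, in the discard analysis, ``every coefficient vector representing $N_{\lambda,i}$ lies outside $\spn\{\nu^{a}\}$'' is better phrased as: if $N_{\lambda,i}$ were dense, any coefficient vector $\nu$ with $\sum_j\nu_j M_j=N_{\lambda,i}$ would lie in $\spn\{\nu^{a}\}$ by maximality, whence $N_{\lambda,i}\in\spn\{N^{a}\}$; so $N_{\lambda,i}\notin\spn\{N^{a}\}$ implies $N_{\lambda,i}$ is not dense. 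The conclusion and the $|\mathbb{F}|^{s^{+}-\tau}$ bound are then correct.
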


\begin{proof} We prove the claim by down-wards induction on $|\mathcal{F}|$. The base case is $\mathcal{F} = \mathcal{P}([k-1])$, in which case we take $s = 1$, $\beta = 0$, $t = r$ and $\Gamma_i = B_i$.\\

Assume that we have proved the claim for some $\mathcal{F}$. Let $C = C_{\mathcal{F}}$ and $D = D_{\mathcal{F}}$. Let $\beta, \Gamma_{[t]}$ be as above for the choice $\mathcal{F}$ for the down-set. Thus $s, t \leq C \log^D \epsilon^{-1} r^D$. Let $I_0$ be a maximal set in $\mathcal{F}$. The set $I_0$ is non-empty, since we are done otherwise. Let $\mathcal{F}' = \mathcal{F} \setminus \{I_0\}$. Write each $\Gamma_i(x_{[k-1]}) = \Gamma'_i(x_{[k-1]}) + \Psi_i(x_{I_0})$, for a $\mathcal{F}'$-supported multiaffine map $\Gamma'_i$ and a multilinear map $\Psi_i \colon G_{I_0} \to H$. Look at maximal independent set $\lambda^1, \dots, \lambda^d \in \mathbb{F}^t$ such that for each $i \in [d]$, $\ex_{x_{I_0}, h} \chi\Big(\Big(\sum_{j \in [t]} \lambda^i_j \Psi_j(x_{I_0})\Big) \cdot h\Big) \geq \nu = \epsilon 2^{-k} |\mathbb{F}|^{-s}$. For each $i \in [d]$, we apply Theorem~\ref{strongInvThm} to the multilinear map on $G_{I_0} \times H$ given by $(x_{I_0}, h) \mapsto \big(\sum_{j \in [t]} \lambda^i_j \Psi_j(x_{I_0})\big) \cdot h$, to find $r_i \leq \overline{C} \log^{\overline{D}} \nu^{-1}$, multilinear maps $\gamma^i_j \colon G_{J_{ij}} \to \mathbb{F}$ and $\Lambda^i_j \colon G_{I_0 \setminus J_{ij}} \to H$, for $j \in [r_i]$, where $\emptyset \not= J_{ij} \subset I_0$, such that 
\[\Big(\forall x_{I_0} \in G_{I_0}, h \in H\Big)\hspace{6pt}\Big(\sum_{j \in [t]} \lambda^i_j \Psi_j(x_{I_0})\Big) \cdot h = \sum_{j \in [r_i]} \gamma^i_j(x_{J_{ij}}) \Lambda^i_j(x_{I_0 \setminus J_{ij}}) \cdot h.\]
Define a multiaffine map $\beta' \colon G_{[k-1]} \to \mathbb{F}^s \times \mathbb{F}^{\{(i,j) \colon i \in [d], j \in [r_i]\}}$ by 
\[\beta'(x_{[k-1]}) = \Big(\beta(x_{[k-1]}), (\gamma^i_j(x_{J_{ij}}) \colon i \in [d], j \in [r_i])\Big).\]
We now show that $\beta'$ and $\Gamma'_{[t]}, \Lambda^i_j$, $i \in [d], j \in [r_i]$ have the desired properties for the down-set $\mathcal{F}'$.\\

Take arbitrary $\lambda \in \mathbb{F}^r$ and consider $Z_\lambda$. By induction hypothesis, there are distinct layers $L_1, \dots, L_m$ of $\beta$ and multiaffine maps $A_1, \dots, A_m \in \spn \{\Gamma_{[t]}\}$, such that $Z_\lambda \cap L_i = \{x_{[k-1]} \in G_{[k-1]} \colon A_i(x_{[k-1]}) = 0\} \cap L_i$, and $\Big|Z_\lambda \setminus \Big(\cup_{i \in [m]} (Z_\lambda \cap L_i)\Big)\Big| \leq \frac{2^k - |\mathcal{F}|}{2^k} \epsilon |G_{[k-1]}|$. Here $m \leq |\mathbb{F}|^s$. For each $i \in [m]$, write $A_i(x_{[k-1]}) = A'_i(x_{[k-1]}) + C_i(x_{I_0})$ so that $A'_i \in \spn \{\Gamma'_{[t]}\}$ and $C_i \in \spn \{\Psi_{[t]}\}$. Thus, either $\ex_{x_{I_0} \in G_{I_0}, h \in H} \chi(C_i(x_{I_0}) \cdot h) < \nu$, or $C_i$ is a linear combination of $\lambda^1 \cdot \Psi, \dots, \lambda^d \cdot \Psi$.\\
\indent In the former case, by Lemma~\ref{biasHomog}, this means that
\[\begin{split}|\{x_{[k-1]} \in G_{[k-1]} \colon A_i(x_{[k-1]}) = 0\}| = &|G_{[k-1] \setminus I_0}| \ex_{y_{[k-1] \setminus I_0}} |\{x_{I_0} \in G_{I_0} \colon A_i(y_{[k-1] \setminus I_0}, x_{I_0}) = 0\}|\\
= &|G_{[k-1]}| \ex_{y_{[k-1] \setminus I_0}} \ex_{x_{I_0}, h} \chi\Big(A_i(y_{[k-1] \setminus I_0}, x_{I_0}) \cdot h\Big)\\
= &|G_{[k-1]}| \ex_{y_{[k-1] \setminus I_0}} \ex_{x_{I_0}, h} \chi\Big(A_i'(y_{[k-1] \setminus I_0}, x_{I_0}) \cdot h\Big) \chi\Big(C_i(x_{I_0}) \cdot h\Big)\\
\leq &|G_{[k-1]}| \ex_{y_{[k-1] \setminus I_0}} \Big|\ex_{x_{I_0}, h} \chi\Big(A_i'(y_{[k-1] \setminus I_0}, x_{I_0}) \cdot h\Big) \chi\Big(C_i(x_{I_0}) \cdot h\Big)\Big|\\
\leq &|G_{[k-1]}| \ex_{y_{[k-1] \setminus I_0}} \ex_{x_{I_0}, h} \chi\Big(C_i(x_{I_0}) \cdot h\Big)\\
\leq &\nu |G_{[k-1]}| \leq \frac{\epsilon}{2^km}|G_{[k-1]}|,\end{split}\]
where $\tau$ is the map defined by $(x_{I_0}, h) \mapsto C_i(x_{I_0}) \cdot h$. We choose to discard these layers. By doing so, we lose at most $2^{-k}\epsilon$ of density in total.\\
\indent In the latter case, we may find $\mu \in \mathbb{F}^d$ such that $C_i(x_{I_0}) = \sum_{j_1 \in [d], j_2 \in [r_i]} \mu_{j_1} \gamma_{j_2}^{j_1}(x_{J_{j_1j_2}}) \Lambda^{j_1}_{j_2}(x_{I_0 \setminus J_{j_1j_2}})$. Partition $L_i$ into layers $M_1, \dots, M_l$ of $\beta'$. On each layer $M_j$, $C_i$ becomes a linear combination of $\Lambda^{j_1}_{j_2}$, and thus $A_i$ becomes a linear combination of $\Gamma'_{j_1}, \Lambda^{j_1}_{j_2}$, finishing the proof.\\

For the bounds, observe the codimension of $\beta'$ is at most
\[s + \sum_{i \in [d]} r_i \leq s + \overline{C} t \Big(s + k + \log \epsilon^{-1}\Big)^{\overline{D}} \leq 2 C^{\overline{D} + 1} \overline{C} (2k)^{\overline{D}} \Big(r \log \epsilon^{-1}\Big)^{(D + 1)(\overline{D} + 1)},\]
and the number of maps $\Gamma'_{j_1}, \Lambda^{j_1}_{j_2}$ is at most
\[t + \sum_{i \in [d]} r_i \leq 2 C^{\overline{D} + 1} \overline{C} (2k)^{\overline{D}} \Big(r \log \epsilon^{-1}\Big)^{(D + 1)(\overline{D} + 1)},\]
as desired.\end{proof}

The theorem follows by applying the proposition for $\mathcal{F} = \{\emptyset\}$. Then each $Z_\lambda \cap L_i = L_i$ is a layer of $\beta$. The constants may be taken to be
\[C^{\bm{inner}}_{k-1} = \Big(2 \overline{C} (2k)^{\overline{D}}\Big)^{(\overline{D} + 1)^{2^k}}\hspace{1cm}\text{and}\hspace{1cm}D^{\bm{inner}}_{k-1} = (2\overline{D} + 2)^{2^k},\]
where $\overline{C} = C^{\bm{strong}}_k$ and $\overline{D} = D^{\bm{strong}}_k$. Hence, if $t$ is a quantity such that $C^{\bm{weak}}_k \leq 2^{k^{2^{t}}}$ and $D^{\bm{weak}}_k \leq 2^{2^{t}}$, using \eqref{strongBounds}, then
\begin{equation}\label{innerBounds}C^{\bm{inner}}_{k-1} \leq 2^{k^{2^{t + O(k)}}}\hspace{1cm}\text{and}\hspace{1cm}D^{\bm{inner}}_{k-1} \leq 2^{2^{t + O(k)}}.\end{equation}
\qed
\boldSection{$\bm{Inner}(k-1) \implies \bm{Columns}(k)$}
\noindent\emph{Proof.} We begin the proof by proving the following lemma.

\begin{lemma}\label{solCrit}Let $G$ be a $\mathbb{F}$-vector space. Let $x_1, \dots, x_r \in G$ and let $\lambda_1, \dots, \lambda_r \in \mathbb{F}$. The following are equivalent.
\begin{itemize}
\item[\textbf{(i)}]There is $y \in G$ such that $x_i \cdot y = \lambda_i$ for each $i \in [r]$.
\item[\textbf{(ii)}]$(\forall \mu \in \mathbb{F}^r) \sum_{i \in [r]} \mu_i x_i = 0 \implies \mu \cdot \lambda = 0$. 
\end{itemize}
\end{lemma}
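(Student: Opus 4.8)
This is the standard consistency criterion for a system of linear equations (a Fredholm-alternative statement), so the plan is to treat it as pure linear algebra. First I would dispose of the direction (i)$\implies$(ii), which is an immediate computation: if $y\in G$ satisfies $x_i\cdot y=\lambda_i$ for all $i\in[r]$ and $\sum_{i\in[r]}\mu_i x_i=0$, then
\[\mu\cdot\lambda=\sum_{i\in[r]}\mu_i\lambda_i=\sum_{i\in[r]}\mu_i(x_i\cdot y)=\Big(\sum_{i\in[r]}\mu_i x_i\Big)\cdot y=0,\]
using only bilinearity of the dot product.

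For (ii)$\implies$(i) I would introduce the linear map $T\colon G\to\mathbb{F}^r$ given by $T(y)=(x_1\cdot y,\dots,x_r\cdot y)$, so that (i) is exactly the assertion $\lambda\in\img T$. The key step is to compute the orthogonal complement of $\img T$ inside $\mathbb{F}^r$ with respect to the standard dot product: a vector $\mu$ lies in $(\img T)^\perp$ if and only if $\big(\sum_{i\in[r]}\mu_i x_i\big)\cdot y=0$ for every $y\in G$, which by non-degeneracy of the dot product on $G$ is the same as $\sum_{i\in[r]}\mu_i x_i=0$. Thus hypothesis (ii) states precisely that $\lambda$ is orthogonal to every element of $(\img T)^\perp$, i.e.\ $\lambda\in\big((\img T)^\perp\big)^\perp$, and the proof finishes by identifying this double complement with $\img T$.

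The only point I expect to require care is the identity $(W^\perp)^\perp=W$ for a subspace $W\subset\mathbb{F}^r$: over a finite field the standard dot product is isotropic, so this is not a "geometric" fact, but it still holds because the form is non-degenerate. Concretely, for any subspace one has $\dim W+\dim W^\perp=r$ (the map $v\mapsto v\cdot(-)$ is an isomorphism of $\mathbb{F}^r$ with its dual carrying $W^\perp$ onto the annihilator of $W$), and applying this twice gives $\dim(W^\perp)^\perp=\dim W$; since $W\subset(W^\perp)^\perp$ trivially, equality follows. Feeding this in yields $\lambda\in\img T$ and hence the desired $y$.

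If one prefers to avoid any use of $\perp$, I would instead run an induction on $r$: if $x_r\in\spn\{x_1,\dots,x_{r-1}\}$ then the $r$-th equation follows from the first $r-1$ equations together with (ii), so it holds automatically; otherwise apply the inductive hypothesis to the first $r-1$ equations to obtain some $y_0$, choose $w\in G$ with $x_i\cdot w=0$ for $i<r$ but $x_r\cdot w\neq0$, and correct $y_0$ by an appropriate scalar multiple of $w$. Either route is short; the double-complement argument is the one I would write up.
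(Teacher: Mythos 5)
Your proposal is correct, but the route you actually write up (the double orthogonal complement) is genuinely different from the paper's. The paper proves (ii)$\implies$(i) by selecting a maximal linearly independent subset $\{x_1,\dots,x_s\}$ of the $x_i$, expressing every $x_i$ with $i>s$ as a linear combination $x_i=\sum_{j\in[s]}\mu^i_j x_j$, observing that (ii) forces the consistent relations $\lambda_i=\sum_{j\in[s]}\mu^i_j\lambda_j$, and then solving only the independent subsystem $x_j\cdot y=\lambda_j$, $j\in[s]$ (which has a solution since independent vectors define a surjective map $y\mapsto(x_j\cdot y)_{j\in[s]}$). This is essentially the inductive alternative you sketch at the end of your proposal, written without induction. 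Your main writeup instead introduces $T\colon G\to\mathbb{F}^r$, $T(y)=(x_i\cdot y)_i$, identifies the hypothesis as $\lambda\in\big((\img T)^\perp\big)^\perp$, and closes with $W^{\perp\perp}=W$ via the dimension count $\dim W+\dim W^\perp=r$ — correctly noting that isotropy of the standard form over a finite field is not an obstacle because only non-degeneracy is used. Both routes are valid and of comparable length; the paper's is slightly more elementary in that it avoids invoking the $W^{\perp\perp}=W$ machinery, while yours is conceptually tidier as a single Fredholm-type statement. One small point worth making explicit in either version (you do, the paper does not): both proofs use that the fixed dot product on $G$ is non-degenerate — the paper needs it to solve the independent subsystem, you need it to compute $(\img T)^\perp$ — and the lemma is genuinely false without this, e.g.\ if some nonzero $x_i$ lies in the radical of the form.
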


\begin{proof}\textbf{(i)}$\implies$\textbf{(ii):} Suppose that $\sum_{i \in [r]} \mu_i x_i = 0$. Take dot product with $y$.\\
\textbf{(ii)}$\implies$\textbf{(i):} Take a maximal independent set $\{x_{i_1}, \dots, x_{i_s}\}$ among $x_i$. Renaming $x_i$ if necessary, we may assume that this set is $\{x_1, \dots, x_s\}$. For $i \in [s + 1, r]$, we have some $\mu^i \in \mathbb{F}^s$ such that $x_i = \sum_{j \in [s]} \mu^i_j x_j$. By property \textbf{(ii)} we also have $\lambda_i = \sum_{j \in [s]} \mu^i_j \lambda_j$. Since $x_1, \dots, x_s$ are independent, there is $y$ such that $x_i \cdot y = \lambda_i$ for each $i \in [s]$. Property \textbf{(i)} follows since $\lambda_i$ for $i \in [s + 1, r]$ satisfy the identities above.\end{proof}

Write $\alpha_i(x_{[k]}) = \alpha'_i(x_{[k-1]}) + x_k \cdot A_i(x_{[k-1]})$ for multiaffine maps $\alpha'_i \colon G_{[k-1]} \to \mathbb{F}$ and $A_i \colon G_{[k-1]} \to G_k$. For each coset $\Lambda \subset \mathbb{F}^r$ define $V_{\Lambda} = \Big\{x_{[k-1]} \in G_{[k-1]} \colon (\forall \lambda \in \Lambda) (\exists y \in G_k) \alpha(x_{[k-1]}, y) = \lambda\Big\}$. Applying the lemma above, we may rewrite this set as 
\[V_{\Lambda} = \Big\{x_{[k-1]} \in G_{[k-1]} \colon \Big(\forall \lambda \in \Lambda\Big) \Big(\forall \mu \in \mathbb{F}^r \text{ s. t. } \sum_{i \in [r]} \mu_i A_i(x_{[k-1]}) = 0\Big) \sum_{i \in [r]} \mu_i (\lambda_i - \alpha'_i(x_{[k-1]})) = 0\Big\}.\]
Notice that for each $x_{[k-1]} \in G_{[k-1]}$, the set $\{\alpha(x_{[k-1]}, y) \colon y \in G_k\}$ is a coset $\Lambda$ in $\mathbb{F}^r$, and that $|\{y \in G_k \colon \alpha(x_{[k-1]}, y) \in S\}| = \frac{|\Lambda \cap S|}{|\Lambda|}|G_k|$. Thus, $X$ is the union of sets of the form $V_\Lambda \setminus \Big(\cup_{\substack{\Lambda \subsetneq M\subseteq \mathbb{F}^r\\M\text{ coset}}} V_M\Big)$, where $\Lambda$ are cosets in $\mathbb{F}^r$ such that $|\Lambda \cap S| \geq \epsilon |\Lambda|$.\\

Next, for each $M \leq \mathbb{F}^r$, define 
\[W_M = \Big\{x_{[k-1]} \in G_{[k-1]} \colon \Big(\forall \mu \in M\Big) \sum_{i \in [r]} \mu_i A_i(x_{[k-1]}) = 0\Big\}.\]
Thus
\[V_\Lambda = \cup_{M \leq \mathbb{F}^r} \bigg(\Big(W_M \setminus (\cup_{M' > M} W_{M'})\Big) \cap \Big\{x_{[k-1]} \in G_{[k-1]} \colon \Big(\forall \lambda \in \Lambda\Big)\Big(\forall \mu \in M\Big) \mu \cdot \alpha'(x_{[k-1]}) = \mu \cdot \lambda\Big\}\bigg).\]
Let $\Lambda = \lambda^0 + \Lambda^0$, for a subspace $\Lambda^0$. Notice that
\[\begin{split}&\Big(\forall \lambda \in \Lambda\Big)\Big(\forall \mu \in M\Big) \mu \cdot \alpha'(x_{[k-1]}) = \mu \cdot \lambda\\
&\hspace{1cm}\iff \bigg(\Big(\forall \mu \in M\Big) \mu \cdot \alpha'(x_{[k-1]}) = \mu \cdot \lambda^0\bigg)\text{ and }\bigg(\Big(\forall \mu \in M\Big)\Big(\forall \lambda \in \Lambda_0\Big) \mu \cdot \lambda = 0\bigg).\end{split}\]
Hence
\begin{equation}\label{eqSubspaceCombs}V_\Lambda = \cup_{M \leq (\Lambda^0)^\perp} \bigg(\Big(W_M \setminus (\cup_{M' > M} W_{M'})\Big) \cap \Big\{x_{[k-1]} \in G_{[k-1]} \colon \Big(\forall \mu \in M\Big) \mu \cdot \alpha'(x_{[k-1]}) = \mu \cdot \lambda^0\Big\}\bigg).\end{equation}
Apply Theorem~\ref{innerAppThm} to $A_1, \dots, A_r$ to find $s \leq C^{\bm{inner}}_{k-1} \Big((4r^3 + 3r^2)\log \epsilon^{-1}\Big)^{D^{\bm{inner}}_{k-1}}$, a multiaffine map $\beta \colon G_{[k-1]} \to \mathbb{F}^s$ so that for each $\lambda \in \mathbb{F}^r$ the set $\{\lambda \cdot A = 0\}$ can be internally approximated by layers of $\beta$ up to error of at most $\epsilon |\mathbb{F}|^{-(4r^2 + 3r)}$ in density. Notice that, if $M = \langle \mu^1, \dots, \mu^d\rangle$, where $d \leq r$, then 
\[W_M = \cap_{i \in [d]} W_{\langle \mu^i\rangle}.\]
Recall that $W_{\langle \mu^i \rangle} = \Big\{x_{[k-1]} \in G_{[k-1]} \colon \sum_{i \in [r]} \mu_i A_i(x_{[k-1]}) = 0\Big\}$, so we may we internally approximate $W_{\langle \mu^i \rangle}$ by a union $D_i$ of layers of $\beta$ with error of density at most $\epsilon |\mathbb{F}|^{-(4r^2 + 3r)}$. Thus $W_M \supset \cap_{i \in [d]} D_i$ and
\[|W_M \setminus (\cap_{i \in [d]} D_i)| = |\cup_{i \in [d]} (W_M \setminus D_i)| \leq |\cup_{i \in [d]} (W_{\langle \mu^i \rangle} \setminus D_i)| \leq |\mathbb{F}|^{-4r^2-2r}\epsilon|G_{[k-1]}|.\]
Also, apply Proposition~\ref{BohrApproxSim} to $A_1, \dots, A_r$ to find multiaffine maps $\gamma_1, \dots, \gamma_r \colon G_{[k-1]} \to \mathbb{F}^t$, where $t \leq (5r^3 + 2r^2) \log \epsilon^{-1}$ such that each $W_{\langle \mu \rangle}$ can be externally approximated by $\{\mu \cdot \gamma = 0\}$ with error of density at most $|\mathbb{F}|^{-5r^2 - 2r}\epsilon$. Thus,
\[|(\cap_{i \in [d]} \{\mu^i \cdot \gamma = 0\}) \setminus W_M| \leq \Big|\cup_{i \in [d]} (\{\mu^i \cdot \gamma = 0\} \setminus W_{\langle \mu^i\rangle})\Big| \leq |\mathbb{F}|^{-5r^2 - r}\epsilon|G_{[k-1]}|.\]
Finally, define a multiaffine map $\phi \colon G_{[k-1]} \to \mathbb{F}^r \times \mathbb{F}^s \times \mathbb{F}^t$ by $\phi = (\alpha', \beta, \gamma)$. Then, each $W_M$ can be approximated both internally and externally using layers of $\phi$ up to error of density at most $|\mathbb{F}|^{-4r^2-2r}\epsilon$. Hence, from~\eqref{eqSubspaceCombs}, every $V_\Lambda$ may be approximated both internally and externally using layers of $\phi$ up to error of density at most $|\mathbb{F}|^{-2r^2 -2r}\epsilon$. Finally, each $V_\Lambda \setminus \Big(\cup_{\substack{\Lambda \subsetneq M\subseteq \mathbb{F}^r\\M\text{ coset}}} V_M\Big)$ may be approximated both internally and externally using layers of $\phi$ up to error of density at most $|\mathbb{F}|^{-r^2-r}\epsilon$. Thus, the set  
\[\cup_{\substack{\Lambda\text{ coset}\\ |\Lambda \cap S| \geq \epsilon |\Lambda|}} \bigg(V_\Lambda \setminus \Big(\cup_{\substack{\Lambda \subsetneq M\subseteq \mathbb{F}^r\\M\text{ coset}}} V_M\Big)\bigg)\]
may be approximated both internally and externally using layers of $\phi$ up to error of density at most $\epsilon$, as desired.\\
\indent When it comes to bounds, the codimension of the desired map is $r + s + t$, and we may take $C^{\bm{columns}}_k = 20C^{\bm{inner}}_{k-1}$ and $D^{\bm{columns}}_k = 3D^{\bm{inner}}_{k-1}$. Hence, if $t$ is a quantity such that $C^{\bm{weak}}_k \leq 2^{k^{2^{t}}}$ and $D^{\bm{weak}}_k \leq 2^{2^{t}}$, using \eqref{innerBounds}, then
\begin{equation}\label{columnsBounds}C^{\bm{columns}}_{k} \leq 2^{k^{2^{t + O(k)}}}\hspace{1cm}\text{and}\hspace{1cm}D^{\bm{columns}}_{k} \leq 2^{2^{t + O(k)}}.\end{equation}
\qed
\pagebreak
\boldSection{$\bm{Columns}(k-1) \land \bm{Inner}(k-1) \implies \bm{Conv}(k)$}
\noindent\emph{Proof.} We prove the claim by induction on $k$, followed by induction on $l$. Recall that we use $Z$ in the expressions below also to denote the indicator function of the set $Z$. We include the artificial case $l = 0$. In this case, we have $Z(x_{[k]}) = |\mathbb{F}|^{-r} \sum_{\lambda \in \mathbb{F}^r} \chi\Big(\lambda \cdot \alpha(x_{[k]})\Big)$. Hence, in this case we may take $C^{\bm{conv}}_{k, 0} = 1, D^{\bm{conv}}_{k, 0} = 1$, $c_i = |\mathbb{F}|^{-r}$, $\beta_i = \alpha_i$.\\

Write $C = C^{\bm{conv}}_{k, l-1}, D = D^{\bm{conv}}_{k, l-1}$. By induction hypothesis there are $s, t \leq C \log^{D}_{|\mathbb{F}|}(100 \epsilon^{-2}) r^D$, multiaffine forms $\beta_i \colon G_{[k]} \to \mathbb{F}$ for $i \in [s]$, multiaffine map $\gamma \colon G_{[k] \setminus \{l-1\}} \to \mathbb{F}^t$, constants $c_1, \dots, c_m \in \mathbb{C}$, multiaffine maps $\rho_1, \dots, \rho_m \in \spn\{\beta_{[s]}\}$ and layers $L_1, \dots, L_n$ of $\gamma$ such that
\[\bigg|\bigconv{l-1} \cdots \bigconv{1} Z(x_{[k]}) - \sum_{i \in [m]} c_i \chi\Big(\rho_i(x_{[k]})\Big)\bigg| \leq \frac{\epsilon^2}{100}\]
for all $x_{[k]} \in G_{[k]} \setminus \Big((\cup_{i \in [n]} L_i) \times G_{l-1}\Big)$, $|\cup_{i \in [n]} L_i| \leq \frac{\epsilon^2}{100} |G_{[k] \setminus \{l-1\}}|$ and $\sum_{i \in [m]} |c_i| \leq 1$.\\
\indent Let $E = \Big(\cup_{i \in [n]} L_i\Big) \times G_{l-1}$, which therefore has density at most $\frac{\epsilon^2}{100}$. Write $\rho_i(x_{[k]}) = \rho'_i(x_{[k] \setminus \{l\}}) + \Gamma_i(x_{[k] \setminus \{l\}}) \cdot x_l$ for multiaffine maps $\rho'_i \colon G_{[k] \setminus \{l\}} \to \mathbb{F}$ and $\Gamma_i \colon G_{[k] \setminus \{l\}} \to G_l$. Write $a \overset{\nu}{\approx} b$ if $|a-b| \leq \nu$. Consider $x_{[k]} \in G_{[k]}$ such that $|E_{x_{[k] \setminus \{l\}}}| \leq \frac{\epsilon}{10}|G_{[k] \setminus {l}}|$. For such $x_{[k]}$ we have
\[\begin{split}\bigconv{l} \cdots \bigconv{1} Z(x_{[k]}) = &\ex_{y_l \in G_l} \bigconv{l-1}\cdots \bigconv{1}Z(x_{[l-1]}, y_l + x_l, x_{[l+1, k]}) \overline{\bigconv{l-1}\cdots \bigconv{1}Z(x_{[l-1]}, y_l, x_{[l+1, k]})}\\
\overset{\epsilon/4}{\approx} &\ex_{y_l \in G_l} \sum_{i \in [m]} c_i \hspace{1pt}\chi\Big(\rho'_i(x_{[k] \setminus \{l\}}) + \Gamma_i(x_{[k] \setminus \{l\}}) \cdot (x_l + y_l)\Big) \overline{\bigconv{l-1}\cdots \bigconv{1}Z(x_{[l-1]}, y_l, x_{[l+1, k]})}\end{split}\]
\[\begin{split}\overset{\epsilon/2}{\approx} &\ex_{y_l \in G_l} \sum_{i, j \in [m]} c_i \overline{c_j} \hspace{1pt}\chi\bigg(\Big(\rho'_i - \rho'_j\Big)(x_{[k] \setminus \{l\}}) + \Gamma_i(x_{[k] \setminus \{l\}}) \cdot x_l + \Big(\Gamma_i - \Gamma_j\Big)(x_{[k] \setminus \{l\}}) \cdot y_l\bigg)\\
\phantom{\bigconv{l} \cdots \bigconv{1} Z(x_{[k]})}= &\sum_{i, j \in [m]} c_i \overline{c_j} \hspace{1pt}\chi\bigg(\Big(\rho'_i - \rho'_j\Big)(x_{[k] \setminus \{l\}}) + \Gamma_i(x_{[k] \setminus \{l\}}) \cdot x_l\bigg) \bm{1}\Big((\Gamma_i - \Gamma_j)(x_{[k] \setminus \{l\}}) = 0 \Big).\end{split}\] 
Write $\beta_i(x_{[k]}) = \beta'_i(x_{[k] \setminus \{l\}}) + \Psi_i(x_{[k] \setminus \{l\}}) \cdot x_l$ for multiaffine maps $\beta'_i \colon G_{[k] \setminus \{l\}} \to \mathbb{F}$ and $\Psi_i \colon G_{[k] \setminus \{l\}} \to G_l$. Hence $\Gamma_i \in \spn \{\Psi_{[s]}\}.$ Thus, for each $i,j \in [m]$, there is $\mu^{ij} \in \mathbb{F}^s$ such that $\Gamma_i - \Gamma_j = \sum_{v \in [s]} \mu^{ij}_v \Psi_v$.\\
\indent Apply Proposition~\ref{BohrApproxSim} to maps $\Psi_{[s]}$ to find $u \leq (2s^3 + s) \log (100 \epsilon^{-1})$ and multiaffine maps $\tau_1, \dots,$ $\tau_s \colon G_{[k] \setminus \{l\}} \to \mathbb{F}^u$ such that for each $\lambda \in \mathbb{F}^s$, we have that $\{\sum_{i \in [s]} \lambda_i \tau_i = 0\} \supset \{\sum_{i \in [s]} \lambda_i \Psi_i = 0\}$ and $|\{\sum_{i \in [s]} \lambda_i \tau_i = 0\} \setminus \{\sum_{i \in [s]} \lambda_i \Psi_i = 0\}| \leq |\mathbb{F}|^{-2s^2}\frac{\epsilon}{100} |G_{[k] \setminus \{l\}}|$.\\
\indent Therefore,
\[\begin{split}\bigconv{l} \cdots \bigconv{1} Z(x_{[k]}) = &\ex_{y_l \in G_l} \bigconv{l-1}\cdots \bigconv{1}Z(x_{[l-1]}, y_l + x_l, x_{[l+1, k]}) \overline{\bigconv{l-1}\cdots \bigconv{1}Z(x_{[l-1]}, y_l, x_{[l+1, k]})}\\
\overset{\epsilon/2}{\approx}&\sum_{i, j \in [m]} c_i \overline{c_j} \hspace{1pt}\chi\bigg(\Big(\rho'_i - \rho'_j\Big)(x_{[k] \setminus \{l\}}) + \Gamma_i(x_{[k] \setminus \{l\}}) \cdot x_l\bigg) \bm{1}\Big(\sum_{w \in [s]} \mu^{ij}_w \tau_w(x_{[k] \setminus \{l\}}) = 0 \Big)\\
=&\sum_{i, j \in [m], \nu \in \mathbb{F}^u} c_i \overline{c_j}|\mathbb{F}|^{-u} \hspace{2pt}\chi\bigg(\Big(\rho'_i - \rho'_j\Big)(x_{[k] \setminus \{l\}}) + \Gamma_i(x_{[k] \setminus \{l\}}) \cdot x_l + \sum_{w \in [u]} \mu_w^{ij} \tau_w(x_{[k] \setminus \{l\}}) \cdot \nu\bigg)\end{split}\] 
holds for all $x_{[k]} \in G_{[k]}$ outside the set 
\[\Big\{x_{[k]} \in G_{[k]} \colon |E_{x_{[k] \setminus \{l\}}}| \geq \frac{\epsilon}{10} |G_{[k] \setminus {l}}|\Big\} \cup \Big(\cup_{i,j \in [m]} (\{\mu^{ij} \cdot \tau = 0\} \setminus \{\Gamma_i - \Gamma_j = 0\})\Big) \times G_l.\]
Since $|E| \leq \frac{\epsilon^2}{100}|G_{[k]}|$, we have
\[\Big|\Big\{x_{[k]} \in G_{[k]} \colon |E_{x_{[k] \setminus \{l\}}}| \geq \frac{\epsilon}{10} |G_{[k] \setminus {l}}|\Big\}\Big| \leq \frac{\epsilon}{10}|G_{[k]}|.\]
Also, by the way we chose $\tau$, recalling that $m \leq |\mathbb{F}|^s$, 
\[\Big|\Big(\cup_{i,j \in [m]} (\{\mu^{ij} \cdot \tau = 0\} \setminus \{\Gamma_i - \Gamma_j = 0\})\Big) \times G_l\Big| \leq \frac{\epsilon}{100}|G_{[k]}|.\]

Next, apply Theorem~\ref{innerAppThm} to $\Psi_1, \dots, \Psi_s$, to find a map $\phi \colon G_{[k] \setminus \{l\}} \to \mathbb{F}^{s'}$, where
\[s' \leq C^{\bm{inner}}_{k-1} \Big(2s^2 + s \log (100 \epsilon^{-1}) \Big)^{D^{\bm{inner}}_{k-1}},\]
such that for each $\lambda \in \mathbb{F}^s$, we may approximate $\{\lambda \cdot \Psi = 0\}$ (and hence each $\{\Gamma_i - \Gamma_j = 0\}$) internally using layers of $\phi$ up to error of at most $|\mathbb{F}|^{-2s}\frac{\epsilon}{100}$ in density.\\
\indent Finally, we need to approximate $\Big\{x_{[k]} \in E \colon |E_{x_{[k] \setminus \{l\}}}| \geq \frac{\epsilon}{10}|G_l|\Big\}$ externally by layers of a multiaffine map of bounded codimension up to error of density at most $\frac{\epsilon}{100}$. We apply Theorem~\ref{denseColumnsThm} to $\gamma$, recalling that $E = \Big(\cup_{i \in [n]} L_i\Big) \times G_{l-1}$, to find such a map $\delta \colon G_{[k] \setminus \{l-1, l\}} \to \mathbb{F}^w$, where 
\[w \leq C^{\bm{columns}}_{k-1} \Big(t \log (100 \epsilon^{-1})\Big)^{D^{\bm{columns}}_{k-1}}.\]
Hence, we obtained the desired approximation outside layers $L'_1, \dots, L'_q$ of the multiaffine map $(\tau, \phi, \delta)$ of codimension at most
\[\begin{split}s \cdot u + s' + w \leq &(2s^4 + s^2) \log (100 \epsilon^{-1}) + C^{\bm{inner}}_{k-1} \Big(2s^2 + s \log (100 \epsilon^{-1}) \Big)^{D^{\bm{inner}}_{k-1}}\\
&\hspace{5cm} +  C^{\bm{columns}}_{k-1} \Big(t \log(100 \epsilon^{-1})\Big)^{D^{\bm{columns}}_{k-1}}\\
\leq & \Big(C^{\bm{inner}}_{k-1} 10^{D^{\bm{inner}}_{k-1}} +  C^{\bm{columns}}_{k-1}8^{D^{\bm{columns}}_{k-1}} + 25\Big) \Big(20^DC\Big)^{\max\{4, 2D^{\bm{inner}}_{k-1}, (D + 1) D^{\bm{columns}}_{k-1}\}}\\ &\hspace{5cm}\cdot\Big(r \log \epsilon^{-1}\Big)^{(D + 1) \max\{4, 2D^{\bm{inner}}_{k-1}, (D + 1) D^{\bm{columns}}_{k-1}\}}\end{split}\]
such that $|\cup_{i \in [q]} L'_i | \leq \epsilon |G_{[k]}|$. The multiaffine forms $\beta'_{[s]}, \Big(x_{[k]} \mapsto \Psi_{[s]}(x_{[k] \setminus \{l\}}) \cdot x_l\Big), \tau_{[s], [u]}$ span the set of multiaffine forms used in the arguments of $\chi$ in the approximation sum, and their number is at most
\[s(2 + u) \leq C^4 20^{4D+1}\Big(r \log \epsilon^{-1}\Big)^{4D+1}.\]
Hence, we may take constants
\[C^{\bm{conv}}_{k,l} = \Big(C^{\bm{inner}}_{k-1} 10^{D^{\bm{inner}}_{k-1}} +  C^{\bm{columns}}_{k-1}8^{D^{\bm{columns}}_{k-1}} + 25\Big) \Big(20^DC\Big)^{\max\{4, 2D^{\bm{inner}}_{k-1}, (D + 1) D^{\bm{columns}}_{k-1}\}}\]
and
\[D^{\bm{conv}}_{k,l} = (D + 1) \max\{4, 2D^{\bm{inner}}_{k-1}, (D + 1) D^{\bm{columns}}_{k-1}\},\]
where $C = C^{\bm{conv}}_{k, l-1}, D = D^{\bm{conv}}_{k, l-1}$, which completes the proof. Hence, if $t$ is a quantity such that $C^{\bm{weak}}_k \leq 2^{k^{2^{t}}}$ and $D^{\bm{weak}}_k \leq 2^{2^{t}}$, using \eqref{columnsBounds}, then
\begin{equation}\label{convBounds}C^{\bm{conv}}_{k, k} \leq 2^{k^{2^{t + O(k)}}}\hspace{1cm}\text{and}\hspace{1cm}D^{\bm{conv}}_{k, k} \leq 2^{2^{t + O(k)}}.\end{equation}\qed

\boldSection{$\bm{Conv}(k) \implies \bm{Weak}(k+1)$}
\begin{proof} Let $\alpha \colon G_{[k+1]} \to \mathbb{F}$ be a multilinear form such that $\ex_{x_{[k+1]}} \chi\Big(\alpha(x_{[k+1]})\Big) \geq c$. Let $\alpha(x_{[k+1]}) = A(x_{[k]}) \cdot x_{k+1}$ for a multilinear map $A \colon G_{[k]} \to G_{k+1}$. Then $\{A = 0\}$ has density at least $c$ in $G_{[k]}$. Apply Lemma~\ref{BohrApprox} to find a multilinear $\beta \colon G_{[k]} \to \mathbb{F}^r$, where $r \leq 2^{k} \log c^{-1} + (k +3) \log 2$, such that $\{A = 0\} \subset \{\beta = 0\}$ and $|\{\beta = 0\} \setminus \{A = 0\}| \leq  2^{- k -3}c^{2^k}|G_{[k]}|$. Let $Z = \{\beta = 0\}$. Apply Theorem~\ref{convAppThm} to $\beta$, to find $s, t \leq C^{\bm{conv}}_{k,k} \Big(2^{k}  \log c^{-1} + k + 3\Big)^{2\cdot D^{\bm{conv}}_{k,k}}$, multiaffine forms $\tau_i\colon G_{[k]} \to \mathbb{F}$ for $i \in [s]$, multiaffine map $\gamma \colon G_{[k-1]} \to \mathbb{F}^t$, constants $c_1, \dots, c_m \in \mathbb{C}$, multiaffine maps $\rho_1, \dots, \rho_m \in \spn\{\tau_{[s]}\}$ and layers $L_1, \dots, L_n$ of $\gamma$ such that
\[\bigg| \bigconv{k} \cdots \bigconv{1} Z(x_{[k]}) - \sum_{i \in [m]} c_i \chi\Big(\rho_i(x_{[k]})\Big)\bigg| \leq \frac{1}{8}c^{2^k}\]
for all $x_{[k]} \in G_{[k]} \setminus \Big((\cup_{i \in [n]} L_i) \times G_k\Big)$, $|\cup_{i \in [n]} L_i| \leq \frac{1}{8}c^{2^k} |G_{[k-1]}|$ and $\sum_{i \in [m]} |c_i| \leq 1$.\\
\indent By applying Cauchy-Schwarz inequality several times, we see that 
\[\ex_{x_{[k]}}\bigconv{k} \cdots \bigconv{1} Z(x_{[k]}) = \ex_{x_{[k-1]}} \Big(\ex_{x_k}\bigconv{k-1} \cdots \bigconv{1} Z(x_{[k]})\Big)^2 \geq \Big(\ex_{x_{[k]}}\bigconv{k-1} \cdots \bigconv{1} Z(x_{[k]})\Big)^2 \geq \dots \geq c^{2^k}.\]
By averaging, there is a non-empty layer $D$ of $(\tau, \gamma)$ such that, for each $x_{[k]} \in D$, $\bigconv{k} \cdots \bigconv{1} Z(x_{[k]}) \geq \frac{1}{4}c^{2^k}$.\\

We now give a definition of an \emph{arrangement} of points in $G_{[k]}$. Firstly, we define \emph{$\emptyset$-arrangement} of lengths $l_{[k]} \in G_{[k]}$ to be the singleton sequence whose only element is $l_{[k]}$. For $i \in [k]$, an \emph{$[i]$-arrangement} of lengths $l_{[k]} \in G_{[k]}$ is a sequence of length $2^i$, being a concatenation $(q_1, q_2)$ of two $[i-1]$-arrangements $q_1$ and $q_2$ (for $i = 1$, $[0]$ is taken to be $\emptyset$), where $q_1$ has lengths $(l_{[i-1]}, l_i + y, l_{[i+1,k]})$ and $q_2$ has lengths $(l_{[i-1]}, y, l_{[i+1,k]})$ for some $y \in G_i$. We note the following.
\begin{lemma}
\begin{itemize}
\item[\textbf{(i)}] For a set $S \subset G_{[k]}$, and a given $x_{[k]} \in G_{[k]}$, the number of $[i]$-arrangements of lengths $x_{[k]}$ whose points lie in $S$ is exactly
\[\bigconv{i} \cdots \bigconv{1} S(x_{[k]}) \hspace{2pt}|G_1|^{2^{i-1}} |G_2|^{2^{i-2}} \cdots |G_i|.\]
\item[\textbf{(ii)}] For $x_{[k]}, l_{[k]} \in G_{[k]}$, $j \leq 2^i$, there are exactly $|G_1|^{2^{i-1}-1} |G_2|^{2^{i-2}-1} \cdots |G_i|^{2^0-1}$ $[i]$-arrangements of lengths $l_{[k]}$ that contain $x_{[k]}$ at $j$\textsuperscript{th} position, when $l_{[i + 1, k]} = x_{[i+1,k]}$, and no such $[i]$-arrangements otherwise.
\item[\textbf{(iii)}] If all $2^i$ points of an $[i]$-arrangement of lenghts $x_{[k]}$ lie inside $\{A = 0\}$, then $A(x_{[k]}) = 0$.
\end{itemize}
\end{lemma}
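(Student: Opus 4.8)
The plan is to establish all three parts by induction on $i$ (with the induction for part (ii) also drawing on part (i)), reading everything off from the recursive description of an $[i]$-arrangement of lengths $l_{[k]}$ as a concatenation $(q_1,q_2)$ of two $[i-1]$-arrangements, where $q_1$ has lengths $(l_{[i-1]},l_i+y,l_{[i+1,k]})$ and $q_2$ has lengths $(l_{[i-1]},y,l_{[i+1,k]})$ for a suitable $y\in G_i$, the first $2^{i-1}$ positions being those of $q_1$ and the last $2^{i-1}$ those of $q_2$. One small point used repeatedly is that an iterated convolution $\bigconv{i}\cdots\bigconv{1}f$ of a $\{0,1\}$-valued function $f$ is again real-valued (indeed nonnegative), so the conjugations in the definition of $\bigconv{i}$ can be dropped.

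For part \textbf{(i)}, the base case $i=0$ holds because the only $\emptyset$-arrangement of lengths $x_{[k]}$ is the singleton $(x_{[k]})$, which lies in $S$ iff $x_{[k]}\in S$, i.e.\ equals $S(x_{[k]})$ times the empty product. For the step, an $[i]$-arrangement of lengths $x_{[k]}$ with all points in $S$ is the same data as a choice of $y\in G_i$, an $[i-1]$-arrangement of lengths $(x_{[i-1]},x_i+y,x_{[i+1,k]})$ with all points in $S$, and an $[i-1]$-arrangement of lengths $(x_{[i-1]},y,x_{[i+1,k]})$ with all points in $S$. Counting the latter two by the inductive hypothesis, summing over $y\in G_i$, and invoking the definition of $\bigconv{i}$ together with realness of $\bigconv{i-1}\cdots\bigconv{1}S$, one gets
\[\sum_{y\in G_i}\bigconv{i-1}\cdots\bigconv{1}S(x_{[i-1]},x_i+y,x_{[i+1,k]})\,\bigconv{i-1}\cdots\bigconv{1}S(x_{[i-1]},y,x_{[i+1,k]})=|G_i|\,\bigconv{i}\cdots\bigconv{1}S(x_{[k]});\]
multiplying by the square of the factor $|G_1|^{2^{i-2}}\cdots|G_{i-1}|^{2^0}$ contributed by the two $[i-1]$-arrangements doubles each exponent, and with the extra $|G_i|=|G_i|^{2^0}$ this is exactly $\bigconv{i}\cdots\bigconv{1}S(x_{[k]})\,|G_1|^{2^{i-1}}\cdots|G_i|^{2^0}$.

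For part \textbf{(ii)}, the base case $i=0$ says that the lone $\emptyset$-arrangement of lengths $l_{[k]}$ contains $x_{[k]}$ (at position $1$) iff $l_{[k]}=x_{[k]}$, i.e.\ iff $l_{[1,k]}=x_{[1,k]}$, and then in exactly one way. For the step I would split on whether $j\le 2^{i-1}$, in which case $x_{[k]}$ must sit inside $q_1$, or $j>2^{i-1}$, in which case it sits inside $q_2$. In the first case the inductive hypothesis applied to $q_1$ forces $l_{[i+1,k]}=x_{[i+1,k]}$ and $l_i+y=x_i$, so $y=x_i-l_i$ is determined, there are $|G_1|^{2^{i-2}-1}\cdots|G_{i-1}|^{2^0-1}$ admissible $q_1$, and $q_2$ then has fully prescribed lengths, with $|G_1|^{2^{i-2}}\cdots|G_{i-1}|^{2^0}$ choices (the total number of $[i-1]$-arrangements of given lengths, which is part (i) with $S=G_{[k]}$); multiplying, the exponent of $|G_j|$ becomes $(2^{i-1-j}-1)+2^{i-1-j}=2^{i-j}-1$ for $j\in[i-1]$, and since $|G_i|^{2^0-1}=1$ this is the claimed $|G_1|^{2^{i-1}-1}\cdots|G_i|^{2^0-1}$. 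The case $j>2^{i-1}$ is symmetric with $y=x_i$ forced instead, and if $l_{[i+1,k]}\ne x_{[i+1,k]}$ then neither $q_1$ nor $q_2$ can contain $x_{[k]}$, so there are no such arrangements.

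For part \textbf{(iii)}, the base case is trivial since the single point of an $\emptyset$-arrangement of lengths $x_{[k]}$ is $x_{[k]}$ itself. For the step, if all $2^i$ points of an $[i]$-arrangement $(q_1,q_2)$ of lengths $x_{[k]}$ lie in $\{A=0\}$, then so do all points of $q_1$ and of $q_2$, so the inductive hypothesis gives $A(x_{[i-1]},x_i+y,x_{[i+1,k]})=0$ and $A(x_{[i-1]},y,x_{[i+1,k]})=0$; subtracting and using multilinearity of $A$ in coordinate $i$ yields $A(x_{[k]})=0$. I expect the only slightly delicate part of the whole argument to be the exponent bookkeeping in (i) and (ii) and keeping straight which half of an arrangement a given position falls into; the realness remark makes the convolution manipulation in (i) routine, and (iii) is essentially immediate.
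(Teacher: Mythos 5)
Your proof is correct and takes essentially the same approach as the paper: all three parts are proved by induction on $i$, decomposing an $[i]$-arrangement as a concatenation $(q_1,q_2)$ of two $[i-1]$-arrangements, with the same book-keeping on exponents in (i) and (ii) (your explicit case split on $j\le 2^{i-1}$ versus $j>2^{i-1}$ in (ii) is a slightly more spelled-out version of the paper's terser treatment) and the same linearity-in-coordinate-$i$ step in (iii); the observation that iterated convolutions of a $\{0,1\}$-valued function are real and nonnegative is correct and harmless.
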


As a few times before, we misuse the notation slightly by writing $S$ for the indicator function of the set $S$.

\begin{proof}\textbf{(i):} We prove the claim by induction on $i$. For $i = 0$, the only $[0]$-arrangement of lengths $x_{[k]}$ is precisely the singleton sequence $(x_{[k]})$. Thus, the number of such arrangements equals $S(x_{[k]})$.\\
\indent Suppose the claim holds for some $i$ and take any $x_{[k]}$. By definition, each $[i+1]$-arrangement of lengths $x_{[k]}$ is concatenation of two $[i]$-arrangements, of lengths $(x_{[i]}, x_i + y, x_{[i+2, k]})$ and $(x_{[i]}, y, x_{[i+2, k]})$, for some $y \in G_{i+1}$. Using this, and the inductive hypothesis, we see that the number of $[i+1]$ arrangements of lengths $x_{[k]}$ is exactly
\[\begin{split}&|G_{i+1}| \ex_{y \in G_{i+1}} \bigconv{i} \dots \bigconv{1} S(x_{[i]}, x_i + y, x_{[i+2, k]}) \bigconv{i} \dots \bigconv{1} S(x_{[i]}, y, x_{[i+2, k]}) \Big(|G_1|^{2^{i-1}} |G_2|^{2^{i-2}} \cdots |G_i|\Big)^2\\
&\hspace{3cm}= \bigconv{i+1} \dots \bigconv{1} S(x_{[k]}) |G_1|^{2^{i}} |G_2|^{2^{i-1}} \cdots |G_{i+1}|.\end{split}\]
\textbf{(ii):} For $i = 0$, this is clear. Suppose the claim holds for $[i]$-arrangments. Assume that $x_{[i + 2, k]} = l_{[i+2, k]}$, otherwise there are clearly no desired arrangements. Note that from part \textbf{(i)} there are exactly $|G_1|^{2^{i-1}} |G_2|^{2^{i-2}} \cdots |G_i|$ of $[i]$-arrangements of any given lengths. If $j \leq 2^i$, then we know that the number of $[i]$-arrangments of lengths $(l_{[i]}, y, l_{[i+2, k]})$ that contain $x_{[k]}$ at $j$\textsuperscript{th} position is $|G_1|^{2^{i-1} - 1}$ $ |G_2|^{2^{i-2} - 1} \cdots$ $|G_i|^{2^0 - 1}$, when $y = x_{i+1}$, and zero otherwise. On the other hand, there are exactly $|G_1|^{2^{i-1}}$ $|G_2|^{2^{i-2}} \cdots$ $|G_i|$ of $[i]$-arrangements of lengths $(l_{[i]}, x_{i+1} - l_{i+1}, l_{[i+2, k]})$. The result now follows. The case $j > 2^i$ can be treated similarly.\\
\textbf{(iii):} For $i = 0$, this is clear. Suppose the claim holds for $[i]$-arrangments. Let $q$ be an $[i+1]$-arrangment of lengths $x_{[k]}$ whose all points lie inside $\{A = 0\}$. Then, $q = (q_1, q_2)$ for an $[i]$-arrangement $q_1$ of lengths $(x_{[i]}, x_i + y, x_{[i+2, k]})$  and an $[i]$-arrangement $q_2$ of lengths $(x_{[i]}, y, x_{[i+2, k]})$. By induction hypothesis, we have $A(x_{[i]}, x_i + y, x_{[i+2, k]}) = 0$ and $A(x_{[i]}, y, x_{[i+2, k]}) =0$. Since $A$ is linear in $(i+1)$\textsuperscript{th} coordinate, $A(x_{[k]}) = 0$, as desired.\end{proof}

By part \textbf{(i)} of the lemma, for each $x_{[k]} \in D$, there are at least $\frac{1}{4}c^{2^k}|G_1|^{2^{k-1}} |G_2|^{2^{k-2}} \cdots |G_k|^{2^0}$ $[k]$-arrangements of lengths $x_{[k]}$ with all points in $\{\beta = 0\}$. Since $|\{\beta = 0\} \setminus \{A = 0\}| \leq 2^{-(k + 3)} c^{2^k} |G_{[k]}|$, and, by \textbf{(ii)}, each point in $G_{[k]}$ belongs to at most $2^k |G_1|^{2^{k-1} - 1} |G_2|^{2^{k-2} - 1} \cdots |G_k|^{0}$ $[k]$-arrangements of lengths $x_{[k]}$ (the factor of $2^k$ comes from the number of positions a point may take in a $[k]$-arrangement),  this implies that for each $x_{[k]} \in D$, at least one $[k]$-arrangement of lengths $x_{[k]}$ has all its points inside $\{A = 0\}$. Using part \textbf{(iii)}, we obtain $A(x_{[d]}) = 0$. Hence, $D \subset \{A = 0\}$. To finish the proof we modify $D$ to a variety inside $\{A=0\}$, but one which is defined by multilinear maps only.

\begin{lemma}Suppose that $A \colon G_{[k]} \to H$ is a multilinear map and that $D$ is a variety of codimension $r$ in $G_{[k]}$ such that $D \subset \{A=0\}$. Then, there is $R \leq 2^{2k} r$, multilinear forms $\beta_i \colon G_{I_i} \to \mathbb{F}$, $\emptyset \not= I_i \subset [k]$ for $i \in [R]$, such that $\{x_{[k]} \in G_{[k]} \colon (\forall i \in [R]) \beta_i(x_{I_i}) = 0\} \subset \{A = 0\}$.\end{lemma}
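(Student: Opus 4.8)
The plan is to realise $D$ as $\{x_{[k]}\in G_{[k]}\colon \delta(x_{[k]})=0\}$ for a multiaffine map $\delta\colon G_{[k]}\to\mathbb{F}^r$, fix a point $a_{[k]}\in D$ (we may assume $D\not=\emptyset$, which is the only case needed in the application), and decompose $\delta$ into multilinear parts $\delta(x_{[k]})=\sum_{I\subset[k]}\delta_I(x_I)$, so that $\delta_\emptyset$ is the constant $\delta(0)$ and $\sum_{I\subset[k]}\delta_I(a_I)=\delta(a_{[k]})=0$. For each nonempty $S\subset[k]$ I would introduce the multilinear map $\psi_S\colon G_S\to\mathbb{F}^r$ defined by
\[\psi_S(x_S)=\sum_{I\colon S\subset I\subset[k]}\delta_I\big((x_i)_{i\in S},(a_i)_{i\in I\setminus S}\big),\]
which is genuinely multilinear in $x_S$ because we only substitute the fixed vectors $a_i$ in the remaining coordinates; it is convenient to also put $\psi_\emptyset=\delta(a_{[k]})=0$. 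The forms $\beta_i$ of the lemma will be the $r$ scalar components of all the maps $\psi_S$ over nonempty $S\subset[k]$, so $R\le(2^k-1)r\le 2^{2k}r$ and each $\beta_i$ lives on $G_S$ with $\emptyset\not=S\subset[k]$, as required. It then remains to show $Y:=\{x_{[k]}\colon \psi_S(x_S)=0\ \text{for all nonempty}\ S\subset[k]\}\subset\{A=0\}$.

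This is carried out by two telescoping identities. First, for $x_{[k]}\in G_{[k]}$ and $U\subset[k]$ let $w^U_{[k]}$ be the point with $w^U_i=a_i+x_i$ for $i\in U$ and $w^U_i=a_i$ for $i\notin U$. Expanding each $\delta_I$ multilinearly in the coordinates of $I\cap U$ and regrouping the resulting sum according to the set $J$ of those coordinates where $x_i$ rather than $a_i$ is picked gives
\[\delta(w^U_{[k]})=\sum_{J\subset U}\psi_J(x_J).\]
Hence for $x_{[k]}\in Y$ every summand vanishes, so $\delta(w^U_{[k]})=0$, i.e. $w^U_{[k]}\in D\subset\{A=0\}$, for every $U\subset[k]$. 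Second, since $A$ is multilinear, expanding $A$ at the points $w^U_{[k]}$ and using $\sum_{t\in\{0,1\}}(-1)^{1-t}t=1$ and $\sum_{t\in\{0,1\}}(-1)^{1-t}=0$ yields
\[\sum_{U\subset[k]}(-1)^{k-|U|}A(w^U_{[k]})=A(x_{[k]}).\]
Combining the last two displays gives $A(x_{[k]})=0$ for every $x_{[k]}\in Y$, which is exactly what the lemma asserts.

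I expect no serious obstacle beyond getting the definition of $\psi_S$ right: the naive $S$-th finite difference of $\delta$ at $a_{[k]}$ is only multiaffine, not multilinear, because of the lower-order constant terms, whereas keeping $x$ in the coordinates of $S$ and substituting $a$ only in the extra coordinates $I\setminus S$ repairs this while preserving the telescoping identity $\delta(w^U_{[k]})=\sum_{J\subset U}\psi_J(x_J)$. Once that is in place, both displayed identities are routine multilinear bookkeeping, and the cardinality bound $(2^k-1)r\le 2^{2k}r$ is immediate.
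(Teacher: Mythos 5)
Your proof is correct, and it is worth comparing to the paper's, since although the starting move is the same, the execution is notably cleaner. Both proofs begin by decomposing the multiaffine map defining $D$ into its multilinear parts and fixing a base point $a_{[k]}\in D$; both ultimately rely only on the multilinearity of $A$ (not of the defining map of $D$) to conclude. However, the paper proceeds by an inductive coordinate-by-coordinate replacement: it starts from the (nonempty) subvariety of $D$ cut out by $\delta_I(x_I)=\lambda_I$ with $\lambda_I=\delta_I(a_I)$, and then, one coordinate $d+1$ at a time, replaces each condition whose index set contains $d+1$ by a pair of conditions — one homogeneous, one obtained by substituting a chosen vector into coordinate $d+1$ — doubling the count of forms at each step; the containment in $\{A=0\}$ is preserved at each step because $A(x)=A(\dots,x_{d+1}+v_{d+1},\dots)-A(\dots,v_{d+1},\dots)$. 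Your argument instead writes down the final homogeneous forms in one shot: the shifted multilinear parts $\psi_S(x_S)=\sum_{I\supset S}\delta_I(x_S,a_{I\setminus S})$, and you verify the containment directly via the two telescoping identities $\delta(w^U)=\sum_{J\subset U}\psi_J(x_J)$ and $A(x_{[k]})=\sum_{U\subset[k]}(-1)^{k-|U|}A(w^U)$, the latter being precisely the unrolled version of the paper's step-by-step cancellation. Your route is non-iterative, hence shorter to check, and it yields the slightly sharper count $(2^k-1)r$ as opposed to the paper's iterative bound (which can reach $(3^k-2^k)r$), both of course within the stated $2^{2k}r$.

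Two minor remarks on completeness: you should explicitly note that the lemma is vacuous when $D=\emptyset$ (take all $\beta_i=0$ and the zero variety, but then the containment need not hold; in fact the lemma as stated in the paper implicitly assumes $D\neq\emptyset$, and this is how it is applied, so your restriction to $D\neq\emptyset$ is fine). Also, when you say the telescoping identity for $A$ "uses $\sum_{t\in\{0,1\}}(-1)^{1-t}t=1$ and $\sum_{t\in\{0,1\}}(-1)^{1-t}=0$", the intended meaning — a product over coordinates of these one-variable cancellations — is correct, but it would be worth writing out the interchange of sums $\sum_U(-1)^{k-|U|}\sum_{V\subset U}A(x_V,a_{[k]\setminus V})=\sum_V A(x_V,a_{[k]\setminus V})\sum_{U\supset V}(-1)^{k-|U|}$ and noting that the inner sum vanishes unless $V=[k]$.
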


\begin{proof}By splitting the map that defines $D$ into its multilinear parts, we see that there are mutlilinear forms $\delta_i \colon G_{I_i} \to \mathbb{F}$, $\emptyset \not= I_i \subset [k]$, scalars $\lambda_i \in \mathbb{F}$, $i \leq r' \leq 2^k r$, such that 
\[\emptyset\not=\{x_{[k]} \in G_{[k]} \colon (\forall i \in [r']) \delta_i(x_{I_i}) = \lambda_i\} \subset \{A = 0\}.\]
By induction on $d$, we prove that, misusing the notation above, we may replace the maps by new ones so that $\lambda_i = 0$, when $I_i \cap [d] \not=\emptyset$, at the expense of larger bound $r' \leq 2^{k + d} r$.\\
\indent For $d = 0$, the given maps satisfy these properties. Assume now that the claim holds for some $d \geq 0$. Take an arbitrary point $(v_{[k]})$ such that $\delta_i(v_{I_i}) = \lambda_i$ for all $i  \in [r']$. We claim that if $x_{[k]} \in G_{[k]}$ satisfies $\delta_i(x_{I_i}) = \lambda_i$ for $d + 1 \notin I_i$, $\delta_i(x_{I_i}) = 0$ and $\delta_i(x_{I_i \setminus \{d+1\}}, v_{d + 1}) = \lambda_i$ for $d + 1 \in I_i$, then $A(x_{[k]}) = 0$. Indeed, let $x_{[k]}$ be such a point. It suffices to show that $A(x_{[d]}, x_{d+1} + v_{d+1}, x_{[d+2, k]}) = A(x_{[d]}, v_{d+1}, x_{[d+2, k]}) = 0$. Since $\delta_i(x_{I_i}) = \lambda_i$ for $d+1 \notin I_i$, we just need to show that $\delta_i(x_{I_i \setminus \{d+1\}}; x_{d+1} + v_{d+1}) = \delta_i(x_{I_i \setminus \{d+1\}}; v_{d+1}) = \lambda_i$ for $d +1\in I_i$, which we already know to hold. Note also that $v_{[k]}$ satisfies all these equalities, so we get a non-empty variety. Thus, we may replace the given maps with at most $2r'$ maps with desired properties.\end{proof}

This completes the proof. As far as the bounds are concerned, we may find a variety inside $\{A = 0\}$ with desired properties with codimension bounded by
\[2^{2k}(s + t) \leq 2^{2k+1} C^{\bm{conv}}_{k,k} \Big(2^{k}  \log c^{-1} + k + 3\Big)^{2\cdot D^{\bm{conv}}_{k,k}}.\]
Thus, we may take $C^{\bm{weak}}_{k+1} = 2^{2k+1} C^{\bm{conv}}_{k,k} (2^{k} + k + 3)^{2D^{\bm{conv}}_{k,k}}$ and $D^{\bm{weak}}_{k+1} = 2 D^{\bm{conv}}_{k,k}$.  Hence, if $t$ is a quantity such that $C^{\bm{weak}}_k \leq 2^{k^{2^{t}}}$ and $D^{\bm{weak}}_k \leq 2^{2^{t}}$, using \eqref{convBounds}, then
\begin{equation}\label{weakBounds}C^{\bm{weak}}_{k + 1} \leq 2^{k^{2^{t + O(k)}}}\hspace{1cm}\text{and}\hspace{1cm}D^{\bm{weak}}_{k + 1} \leq 2^{2^{t + O(k)}}.\end{equation}\end{proof}

\thebibliography{99}
\bibitem{BhowLov} A. Bhowmick and S. Lovett, \emph{Bias vs structure of polynomials in large fields, and applications in effective algebraic geometry and coding theory}, arXiv preprint (2015), \verb+arXiv:1506.02047+. 
\bibitem{GowSze} W.T. Gowers, \emph{A new proof of Szemer\'edi’s theorem}, Geometric and Functional Analysis \textbf{11} (2001), no.\ 3, 465--588.
\bibitem{GowersWolf} W.T. Gowers and J. Wolf, \emph{Linear forms and higher-degree uniformitty functions on $\mathbb{F}^n_p$}, Geometric and Functional Analysis \textbf{21} (2011), no. 1, 36--69.
\bibitem{GreenTao} B. Green and T. Tao. \emph{The distribution of polynomials over finite fields, with applications to the Gowers norms}, Contributions to Discrete Mathematics \textbf{4} (2009), no. 2, 1--36.
\bibitem{GreenTaoPrimes} B. Green and T. Tao, \emph{Linear equations in primes}, Annals of Mathematics \textbf{171} (2010), no. 3, 1753--1850.
\bibitem{HostKra} B. Host and B. Kra, \emph{Nonconventional ergodic averages and nilmanifolds}, Annals of Mathematics \textbf{161} (2005), no. 1, 397--488.
\bibitem{Janzer1} O. Janzer, \emph{Low analytic rank implies low partition rank for tensors}, arXiv preprint (2018) \verb+arXiv:1809.10931+.
\bibitem{Janzer2} O. Janzer, \emph{Polynomial bound for the partition rank vs the analytic rank of tensors}, arXiv preprint (2019) \verb+arXiv:1902.11207+.
\bibitem{KaufLov} T. Kaufman and S. Lovett, \emph{Worst case to average case reductions for polynomials}, In Proceedings of 49\textsuperscript{th} Annual IEEE Symposium on Foundations of Computer Science (2008), 166--175.
\bibitem{KazhZie} D. Kazhdan and T. Ziegler, \emph{Properties of high rank subvarieties of affine spaces}, arXiv preprint (2019), \verb+arXiv:1902.00767+.
\bibitem{Lov} S. Lovett, \emph{The  analytic  rank  of  tensors  and  its  applications}, arXiv preprint (2018), \verb+arXiv:1806.09179+.
\bibitem{Naslund}  E. Naslund, \emph{The partition rank of a tensor and $k$-right corners in $\mathbb{F}_q^n$}, arXiv preprint (2017), \verb+arXiv:1701.04475+.
\end{document}